\documentclass[12pt,reqno]{amsart}
\def\draft{n}
\usepackage{
  fullpage,graphicx,dbnsymb,amsmath,amssymb,picins,mathtools,stmaryrd,color,
  hyperref,multicol
}
\usepackage[all]{xy}
\usepackage[overload]{abraces} % http://ctan.org/pkg/abraces

\if\draft y
  \usepackage[color]{showkeys}
  \usepackage{everypage,draftwatermark}
  \SetWatermarkScale{4}
  \SetWatermarkLightness{0.9}
\fi

\theoremstyle{plain}
\newtheorem{theorem}{Theorem}[section]

\newtheorem{proposition}[theorem]{Proposition}

\newtheorem{lemma}[theorem]{Lemma}

\newtheorem{conjecture}[theorem]{Conjecture}

\newtheorem{property}[theorem]{Property}

\theoremstyle{definition}
\newtheorem{definition}[theorem]{Definition}

\newtheorem{lemmadefinition}[theorem]{Lemma-Definition}

\newtheorem{riddle}[theorem]{Riddle}

\theoremstyle{remark}
\newtheorem{comment}[theorem]{Comment}

\newtheorem{example}[theorem]{Example}
\newtheorem{exercise}[theorem]{Exercise}

\newtheorem{remark}[theorem]{Remark}
\newtheorem{warning}[theorem]{Warning}

\newlength{\standardunitlength}
\setlength{\standardunitlength}{0.00083333in}

\catcode`\@=11
\long\def\@makecaption#1#2{%
    \vskip 10pt
    \setbox\@tempboxa\hbox{%\ifvoid\tinybox\else\box\tinybox\fi
      \small\sf{\bfcaptionfont #1. }\ignorespaces #2}%
    \ifdim \wd\@tempboxa >\captionwidth {%
        \rightskip=\@captionmargin\leftskip=\@captionmargin
        \unhbox\@tempboxa\par}%
      \else
        \hbox to\hsize{\hfil\box\@tempboxa\hfil}%
    \fi}
\font\bfcaptionfont=cmssbx10 scaled \magstephalf
\newdimen\@captionmargin\@captionmargin=2\parindent
\newdimen\captionwidth\captionwidth=\hsize
\catcode`\@=12

\newcommand{\ad}{\operatorname{ad}}

\newcommand{\Span}{\operatorname{span}}

\newcommand{\tr}{\operatorname{tr}}

\def\qed{{\hfill\text{$\Box$}}}

\newlength{\globalparindent}
\setlength{\globalparindent}{\parindent}

\newenvironment{myitemize}{
  \begin{list}{$\bullet$}{\setlength{\leftmargin}{16pt}
  \setlength{\labelwidth}{12pt}
  \setlength{\labelsep}{4pt}}
}{
  \end{list}
}

\def\arXiv#1{{\href{http://front.math.ucdavis.edu/#1}{arXiv:#1}}}

\def\web#1{{[\href{http://www.math.toronto.edu/~drorbn/papers/KBH/#1}{\tt Web/#1}]}}

\def\baru{{\bar u}}
\def\barv{{\bar v}}
\def\barw{{\bar w}}
\def\barbaru{{\bar\baru}}
\def\bbN{{\mathbb N}}
\def\bbQ{{\mathbb Q}}
\def\bbR{{\mathbb R}}
\def\calA{{\mathcal A}}
\def\calK{{\mathcal K}}
\def\calM{{\mathcal M}}
\def\calO{{\mathcal O}}
\def\calP{{\mathcal P}}

\def\calT{{\mathcal T}}
\def\calU{{\mathcal U}}
\def\calW{{\mathcal W}}
\def\frakg{{\mathfrak g}}

\def\uT{{u\calT}}
\def\vT{{v\calT}}
\def\wT{{w\calT}}

\def\Abh{{\calA^{bh}}}
\def\Kbh{{\calK^{rbh}}}
\def\Kbhd{{\calK^{b=h}}}
\def\Kbhz{{\calK^{rbh}_0}}
\def\Pbh{{\calP^{bh}}}
\def\Zbh{{Z^{bh}}}

\def\remove{\!\setminus\!}

\def\aft{{\overrightarrow{4T}}}
\def\aAS{{\overrightarrow{AS}}}
\def\aSTU{{\overrightarrow{STU}}}
\def\aIHX{{\overrightarrow{IHX}}}
\def\bch{\operatorname{bch}}

\newcommand{\diver}{\operatorname{div}}
\def\CW{\text{\it CW}}
\def\CWr{\text{\it CW}^{\,r}}
\def\FA{\text{\it FA}}
\def\FL{\text{\it FL}}
\newcommand{\Fun}{\operatorname{Fun}}
\def\CA{\operatorname{CA}}

\def\begin{picture}(0,0)%
\includegraphics{figs/human.pstex}%
\end{picture}%
%
%  pstex_opts: -m 0.2 
%
\setlength{\unitlength}{789sp}%
\begingroup\makeatletter\ifx\SetFigFont\undefined%
\gdef\SetFigFont#1#2#3#4#5{%
  \reset@font\fontsize{#1}{#2pt}%
  \fontfamily{#3}\fontseries{#4}\fontshape{#5}%
  \selectfont}%
\fi\endgroup%
\begin{picture}(1226,2425)(-12,-1573)
\end{picture}%
{\begin{picture}(0,0)%
\includegraphics{figs/human.pstex}%
\end{picture}%
%
%  pstex_opts: -m 0.2 
%
\setlength{\unitlength}{789sp}%
\begingroup\makeatletter\ifx\SetFigFont\undefined%
\gdef\SetFigFont#1#2#3#4#5{%
  \reset@font\fontsize{#1}{#2pt}%
  \fontfamily{#3}\fontseries{#4}\fontshape{#5}%
  \selectfont}%
\fi\endgroup%
\begin{picture}(1226,2425)(-12,-1573)
\end{picture}%
}
\def\begin{picture}(0,0)%
\includegraphics{figs/machine.pstex}%
\end{picture}%
%
%  pstex_opts: -m 0.2 
%
\setlength{\unitlength}{789sp}%
\begingroup\makeatletter\ifx\SetFigFont\undefined%
\gdef\SetFigFont#1#2#3#4#5{%
  \reset@font\fontsize{#1}{#2pt}%
  \fontfamily{#3}\fontseries{#4}\fontshape{#5}%
  \selectfont}%
\fi\endgroup%
\begin{picture}(1224,1374)(-11,-598)
\end{picture}%
{\begin{picture}(0,0)%
\includegraphics{figs/machine.pstex}%
\end{picture}%
%
%  pstex_opts: -m 0.2 
%
\setlength{\unitlength}{789sp}%
\begingroup\makeatletter\ifx\SetFigFont\undefined%
\gdef\SetFigFont#1#2#3#4#5{%
  \reset@font\fontsize{#1}{#2pt}%
  \fontfamily{#3}\fontseries{#4}\fontshape{#5}%
  \selectfont}%
\fi\endgroup%
\begin{picture}(1224,1374)(-11,-598)
\end{picture}%
}
\def\mathinclude#1{{%
  \noindent
  \imagetop{}\
  \imagetop{\includegraphics[scale=0.18]{figs/#1.eps}}
  \newline
}}
\def\dialoginclude#1{{%
  \noindent
  \imagetop{}\ \imagetop{\includegraphics[scale=0.18]{figs/Human-#1.eps}}
  \newline
  \vskip 1mm\noindent
  \imagetop{}\ \imagetop{\includegraphics[scale=0.18]{figs/Machine-#1.eps}}
  \newline
}}

% Following
% http://tex.stackexchange.com/questions/23521/tabular-vertical-alignment-to-top
\def\imagetop#1{\vtop{\null\hbox{#1}}}

\setlength{\marginparwidth}{0.8in}

\begin{document}
\newdimen\captionwidth\captionwidth=\hsize
\setcounter{secnumdepth}{4}

\title[Balloons and Hoops]{Balloons and Hoops and their Universal Finite
Type Invariant, BF Theory, and an Ultimate Alexander Invariant}

\author{Dror~Bar-Natan}
\address{
  Department of Mathematics\\
  University of Toronto\\
  Toronto Ontario M5S 2E4\\
  Canada
}
\email{drorbn@math.toronto.edu}
\urladdr{\url{http://www.math.toronto.edu/~drorbn}}

\date{this edition: Aug.~7,~2013, first edition: Aug.~7, 2013}

\subjclass{57M25}
\keywords{
  2-knots,
  tangles,
  virtual knots,
  w-tangles,
  ribbon knots,
  finite type invariants,
  BF theory,
  Alexander polynomial,
  meta-groups,
  meta-monoids}

\begin{abstract}
Balloons are two-dimensional spheres. Hoops are one dimensional loops.
Knotted Balloons and Hoops (KBH) in 4-space behave much like the first and
second homotopy groups of a topological space --- hoops can be composed
as in $\pi_1$, balloons as in $\pi_2$, and hoops ``act'' on balloons as
$\pi_1$ acts on $\pi_2$. We observe that ordinary knots and tangles
in 3-space map into KBH in 4-space and become amalgams of both balloons
and hoops.

We give an ansatz for a tree and wheel (that is, free-Lie and cyclic
word) -valued invariant $\zeta$ of (ribbon) KBHs in terms of the said
compositions and action and we explain its relationship with finite type
invariants. We speculate that $\zeta$ is a complete evaluation of the BF
topological quantum field theory in 4D. We show that a certain ``reduction
and repackaging'' of $\zeta$ is an ``ultimate Alexander invariant''
that contains the Alexander polynomial (multivariable, if you wish), has
extremely good composition properties, is evaluated in a topologically
meaningful way, and is least-wasteful in a computational sense. If you
believe in categorification, that should be a wonderful playground.

\vskip 3mm

\parpic[l]{\includegraphics[height=1in]{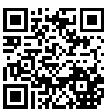}}
\noindent Web resources for this paper are available at
\newline\null\hfill
  \web{}$:=$\url{http://www.math.toronto.edu/~drorbn/papers/KBH/},
\hfill\null\newline
including an electronic version, source files, computer programs, lecture
handouts and lecture videos; the latest of the handouts is attached at
the end of this paper. {\em Throughout this paper we follow the notational
conventions and notations outlined in Section~\ref{subsec:Conventions}.}

\end{abstract}

\maketitle

\tableofcontents

\section{Introduction}

\parpic[r]{\input{figs/TubeT.pstex_t}}
\begin{riddle} \label{riddle:piT} The set of homotopy classes of maps of
a tube $T=S^1\times[0,1]$ into a based topological space $(X,b)$ which
map the rim $\partial T=S^1\times\{0,1\}$ of $T$ to the basepoint $b$
is a group with an obvious ``stacking'' composition; we call that
group $\pi_T(X)$. Homotopy theorists often study $\pi_1(X)=[S^1,X]$ and
$\pi_2(X)=[S^2,X]$ but seldom if ever do they study $\pi_T(X)=[T,X]$. Why?
\end{riddle}

The solution of this riddle is on Page~\pageref{sol:piT}. Whatever it
may be, the moral is that it is better to study the homotopy classes of
circles and spheres in $X$ rather than the homotopy classes of tubes in
$X$, and by morphological transfer, it is better to study isotopy classes
of embeddings of circles and spheres into some ambient space than isotopy
classes of embeddings of tubes into the same space.

In~\cite{WKO}, Zsuzsanna Dancso and I studied the finite-type knot theory
of ribbon tubes in $\bbR^4$ and found it to be closely related to deep
results by Alekseev and Torossian~\cite{AlekseevTorossian:KashiwaraVergne}
on the Kashiwara-Vergne conjecture and Drinfel'd's associators. At some
point we needed a computational tool with which to make and to verify
conjectures.

This paper started in being that computational tool. After a lengthy
search I found a language in which all the operations and equations needed
for~\cite{WKO} could be expressed and computed. Upon reflection, it turned
out that the key to that language was to work with knotted balloons and
hoops, meaning spheres and circles, rather than with knotted tubes.

Then I realized that there may be independent interest in that
computational tool. For (ribbon) knotted balloons and hoops in $\bbR^4$
($\Kbh$, Section~\ref{sec:objects}) in themselves form a lovely
algebraic structure (an MMA, Section~\ref{sec:Operations}), and the
``tool'' is really a well-behaved invariant $\zeta$. More precisely,
$\zeta$ is a ``homomorphism $\zeta$ of the MMA $\Kbhz$ to the MMA $M$
of trees and wheels'' (trees in Section~\ref{sec:trees} and wheels in
Section~\ref{sec:zeta}). Here $\Kbhz$ is a variant of $\Kbh$ defined
using generators and relations (Definition~\ref{def:Kbhz}). Assuming a
sorely missing Reidemeister theory for ribbon-knotted tubes in $\bbR^4$
(Conjecture~\ref{conj:Kbhz}), $\Kbhz$ is actually equal to $\Kbh$.

The invariant $\zeta$ has a rather concise definition that uses only
basic operations written in the language of free Lie algebras. In fact,
a nearly complete definition appears within Figure~\ref{fig:mgaops},
with lesser extras in Figures~\ref{fig:ConnectedSum}
and~\ref{fig:Generators}. These definitions are relatively easy
to implement on a computer, and as that was my original goal, the
implementation along with some computational examples is described
in Section~\ref{sec:Computations}. Further computations, more closely
related to~\cite{AlekseevTorossian:KashiwaraVergne} and to~\cite{WKO},
will be described in a future publication.

In Section~\ref{sec:ft} we sketch a conceptual interpretation of $\zeta$.
Namely, we sketch the statement and the proof of the following theorem:

\begin{theorem} The invariant $\zeta$ is (the logarithm of) a universal
finite type invariant of the objects in $\Kbhz$ (assuming
Conjecture~\ref{conj:Kbhz}, of ribbon-knotted balloons and hoops in
$\bbR^4$).
\end{theorem}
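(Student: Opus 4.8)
The plan is to follow the standard template for establishing universality of a finite type invariant, adapted to the meta-monoid/meta-algebra (MMA) setting. First I would equip $\Kbhz$ with its Vassiliev-style filtration. Since $\Kbhz$ is presented by generators and relations, the natural filtration is by powers of the augmentation ideal of the MMA, or equivalently by the number of ``semivirtual'' crossings: a semivirtual crossing is the formal difference between a genuine crossing and its virtual (uncrossed) counterpart, and an $n$-singular object is one carrying $n$ such differences. Let $\mathcal{F}_n\Kbhz$ denote the span of the $n$-singular objects; this gives a decreasing filtration whose associated graded $\operatorname{gr}\Kbhz=\bigoplus_n\mathcal{F}_n/\mathcal{F}_{n+1}$ is the object of interest.

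Second, I would identify $\operatorname{gr}\Kbhz$ with a space of arrow diagrams. Resolving each semivirtual crossing as a single arrow produces a tautological surjection $\mathcal{A}\twoheadrightarrow\operatorname{gr}\Kbhz$ from the space $\mathcal{A}$ of arrow diagrams, taken modulo the relations forced by the Reidemeister/MMA moves (such as $\overrightarrow{4T}$ and ``tails commute''). The primitive part of $\mathcal{A}$ is spanned precisely by trees and wheels, matching the target MMA $M$ of $\zeta$. Surjectivity is the easy half; the entire content of universality is that this map is in fact an isomorphism.

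Third, I would verify that $\zeta$ --- more precisely $Z:=\exp(\zeta)$, the group-like invariant whose logarithm is $\zeta$ --- respects the filtration. This is where the concrete definition in Figure~\ref{fig:mgaops} is used: each generator contributes its value minus the identity value, a term beginning in degree one, so that applying $Z$ to an $n$-singular object lands in $\bigoplus_{m\geq n}M_m$. Consequently $\zeta$ descends to a symbol map $\operatorname{gr} Z\colon\operatorname{gr}\Kbhz\to M$.

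The key and final step --- which is also the main obstacle --- is to compute this symbol and show that $\operatorname{gr} Z$ is the identity on arrow diagrams, i.e.\ that it inverts the surjection of the second step. Concretely, one resolves an arrow diagram into an $n$-singular object, applies $Z$, and checks that its degree-$n$ part returns the same diagram, the higher operations in $\zeta$ contributing nothing in the leading degree. This single computation does double duty: it proves that $\mathcal{A}\twoheadrightarrow\operatorname{gr}\Kbhz$ is injective, hence an isomorphism (so $\operatorname{gr}\Kbhz$ really is the diagram space), and simultaneously that $Z$ induces the identity on the associated graded, which is exactly the assertion that $Z$ is a universal finite type invariant with $\zeta=\log Z$. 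I expect the bookkeeping of the leading symbol --- confirming that the tree-and-wheel pieces of $\zeta$ assemble, in top degree, to precisely the arrow generating the singularity, and that the relations on $\mathcal{A}$ are exactly those needed and no more --- to be the delicate point. Finally, I would note the r\^ole of Conjecture~\ref{conj:Kbhz}: the argument above is unconditional for the combinatorially-defined $\Kbhz$, and only the identification $\Kbhz=\Kbh$, hence the \emph{topological} reading of the theorem for ribbon-knotted balloons and hoops in $\bbR^4$, requires the missing Reidemeister theory.
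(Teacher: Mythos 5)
Your overall framework --- the semivirtual filtration on $\Kbhz$, the tautological surjection from the arrow-diagram space onto the associated graded, the verification that $Z=\exp(\zeta)$ respects the filtration, and the computation of the symbol --- is the same standard universality template the paper follows in Section~\ref{sec:ft}, and your plan is sound in principle. Where you diverge is at the step you yourself flag as delicate: you propose to verify directly, diagram by diagram, that the symbol of $Z$ is the identity on $\mathcal{A}$, with the tree-and-wheel pieces of $\zeta$ assembling in top degree to the generating arrows. The paper deliberately avoids this computation by exploiting the MMA structure. It first establishes (Section~\ref{subsec:Primitives}) that the primitives $\Pbh$ of $\Abh$ form an MMA isomorphic to $M$; the only nontrivial point there is that the wheels part of the $tha$ action on $\Pbh$ must coincide with $J_u$, and this is forced by the uniqueness characterization of Remark~\ref{rem:JCharacterization} --- any functional satisfying the cocycle identity~\eqref{eq:JhProperty} together with the two easy initial conditions equals $J_u$, so no leading-symbol bookkeeping on wheels is ever needed. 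It then invokes the homomorphic expansion $\Zbh\colon\Kbhz\to\Abh$ of~\cite{WKO} (crossings map to exponentials of arrows, so universality is the standard one-line argument you describe), observes that both $\log\Zbh$ and $\zeta$ are MMA morphisms into $M\cong\Pbh$, and concludes they are equal because they agree on the generators $\rho^\pm$ of Theorem~\ref{thm:Generators}. That generators-and-relations shortcut is what each approach buys or costs: your route is self-contained but requires carrying out the hard symbol computation for arbitrary diagrams (in particular for the wheels, where I do not see how your sketch would proceed without something like the $J$-characterization); the paper's route outsources the diagrammatic isomorphism $\mathcal{A}\cong\operatorname{gr}\Kbhz$ to~\cite{WKO} and replaces the symbol computation with an algebraic uniqueness argument --- which, as the paper notes, also yields an independent proof of Proposition~\ref{prop:JProperties}. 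Your closing remark on the r\^ole of Conjecture~\ref{conj:Kbhz} is exactly right.
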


While the formulae defining $\zeta$ are reasonably simple, the proof
that they work using only notions from the language of free Lie
algebras involves some painful computations --- the more reasonable
parts of the proof are embedded within Sections~\ref{sec:trees}
and~\ref{sec:zeta}, and the less reasonable parts are postponed to
Section~\ref{subsec:JProperties}. An added benefit of the results of
Section~\ref{sec:ft} is that they constitute an alternative construction of
$\zeta$ and an alternative proof of its invariance --- the construction
requires more words than the free-Lie construction, yet the proof of
invariance becomes simpler and more conceptual.

In Section~\ref{sec:BF} we discuss the relationship of $\zeta$
with the BF topological quantum field theory, and in Section~\ref{sec:uA}
we explain how a certain reduction of $\zeta$ becomes a system of
formulae for the (multivariable) Alexander polynomial which, in some
senses, is better than any previously available formula.

Section~\ref{sec:odds} is for ``odds and ends'' --- things worth saying,
yet those that are better postponed to the end. This includes the details
of some definitions and proofs, some words about our conventions, and an
attempt at explaining how I think about ``meta'' structures.

\section{The Objects} \label{sec:objects}

\subsection{Ribbon Knotted Balloons and Hoops} \label{subsec:kbh} This
paper is about ribbon-knotted balloons ($S^2$'s) and hoops (or loops, or
$S^1$'s) in $\bbR^4$ or, equivalently, in $S^4$.  Throughout this paper
$T$ and $H$ will denote finite\footnote{The bulk of the paper easily
generalizes to the case where $H$ (not $T$!) is infinite, though nothing
is gained by allowing $H$ to be infinite.} (not necessarily disjoint)
sets of ``labels'', where the labels in $T$ label the balloons (though
for reasons that will become clear later, they are also called ``tail
labels'' and the things they label are sometimes called ``tails''),
and the labels in $H$ label the hoops (though they are sometimes called
``head labels'' and they sometimes label ``heads'').

\parpic[r]{\def\ori{$\circlearrowleft$}\input{figs/modelKBH.pstex_t}}
\begin{definition} \label{def:KBH}
A $(T;H)$-labelled rKBH (ribbon-Knotted Balloons and Hoops)
is a ribbon\footnote{The adjective ``ribbon'' will be explained in
Definition~\ref{def:ribbon} below.}
up-to-isotopy embedding into $\bbR^4$ or into $S^4$ of $|T|$ oriented 2-spheres
labelled by the elements of $T$ (the ``balloons''), of $|H|$ oriented
circles labelled by the elements of $H$ (the ``hoops''), and of $|T|+|H|$
strings (namely, intervals) connecting the $|T|$ balloons and the
$|H|$ hoops to some fixed base point, often denoted $\infty$. Thus a
$(\underline{2};\underline{3})$-labelled\footnote{See ``notational
conventions'', Section~\ref{subsec:Conventions}.} rKBH, for example, is a
ribbon up-to-isotopy embedding into $\bbR^4$ or into $S^4$ of the space
drawn on the right. Let $\Kbh(T;H)$ denote the set of all $(T;H)$-labelled
rKBHs.
\end{definition}

Recall that 1-dimensional objects cannot be knotted in 4-dimensional
space. Hence the hoops in an rKBH are not in themselves knotted, and
hence an rKBH may be viewed as a knotting of the $|T|$ balloons in it,
along with a choice of $|H|$ elements of the fundamental group of the
complement of the balloons. Likewise, the $|T|+|H|$ strings in an rKBH only
matter as homotopy classes of paths in the complement of the balloons. In
particular, they can be modified arbitrarily in the vicinity of $\infty$,
and hence they don't even need to be specified near $\infty$ --- it is
enough that we know that they emerge from a small neighbourhood of $\infty$
(small enough so as to not intersect the balloons) and that each reaches
its balloon or its hoop.

Conveniently we often pick our base point to be the point $\infty$ of
the formula $S^4=\bbR^4\cup\{\infty\}$ and hence we can draw rKBHs in
$\bbR^4$ (meaning, of course, that we draw in $\bbR^2$ and adopt
conventions on how to lift these drawings to $\bbR^4$).

We will usually reserve the labels $x$, $y$, and $z$ for hoops, the labels
$u$, $v$, and $w$ for balloons, and the labels $a$, $b$, and $c$ for things
that could be either balloons or hoops. With almost no risk of ambiguity,
we also use $x$, $y$, $z$, along also with $t$, to denote the coordinates
of $\bbR^4$. Thus $\bbR^2_{xy}$ is the $xy$ plane within $\bbR^4$,
$\bbR^3_{txy}$ is the hyperplane perpendicular to the $z$ axis, and
$\bbR^4_{txyz}$ is just another name for $\bbR^4$.

\begin{figure}
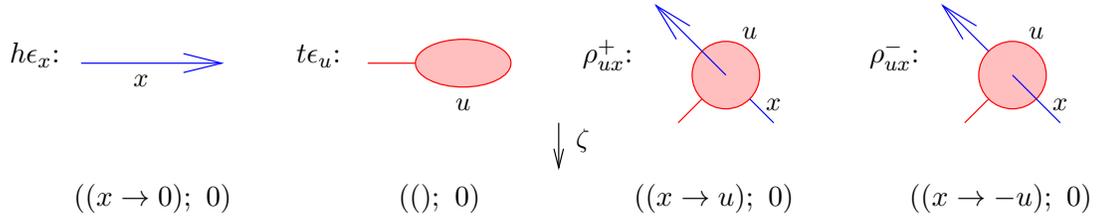

\[
  \def\heps{{$((x\to 0);\ 0)$}}
  \def\teps{{$(();\ 0)$}}
  \def\rhop{{$((x\to u);\ 0)$}}
  \def\rhom{{$((x\to -u);\ 0)$}}
  \input figs/Generators.pstex_t
\]
\caption{
  The four generators $h\epsilon_x$, $t\epsilon_u$, and
  $\rho^\pm_{ux}$, drawn in $\bbR^3_{xyz}$ ($\rho^\pm_{ux}$ differ in
  the direction in which $x$ pierces $u$ --- from below at $\rho^+_{ux}$
  and from above at $\rho^-_{ux}$). The lower part of the figure previews
  the values of the main invariant $\zeta$ discussed in this paper on
  these generators. More later, in Section~\ref{sec:zeta}.
} \label{fig:Generators}
\end{figure}

Examples~\ref{exa:generators} and~\ref{exa:Sophisticated} below are more
than just examples, for they introduce much notation that we use later on.

\begin{example} \label{exa:generators} The first four examples of rKBHs
are the ``four generators'' shown in Figure~\ref{fig:Generators}:

\begin{myitemize}
\item $h\epsilon_x$ is an
element of $\Kbh(;x)$ (more precisely, $\Kbh(\emptyset;\{x\})$). It has
a single hoop extending from near $\infty$ and back to near $\infty$,
and as indicated above, we didn't bother to indicate how it closes near
$\infty$ and how it is connected to $\infty$ with an extra piece of
string. Clearly, $h\epsilon_x$ is the ``unknotted hoop''.
\end{myitemize}
\begin{myitemize}
\parpic[r]{\parbox{1.5in}{
  \includegraphics[width=1.5in]{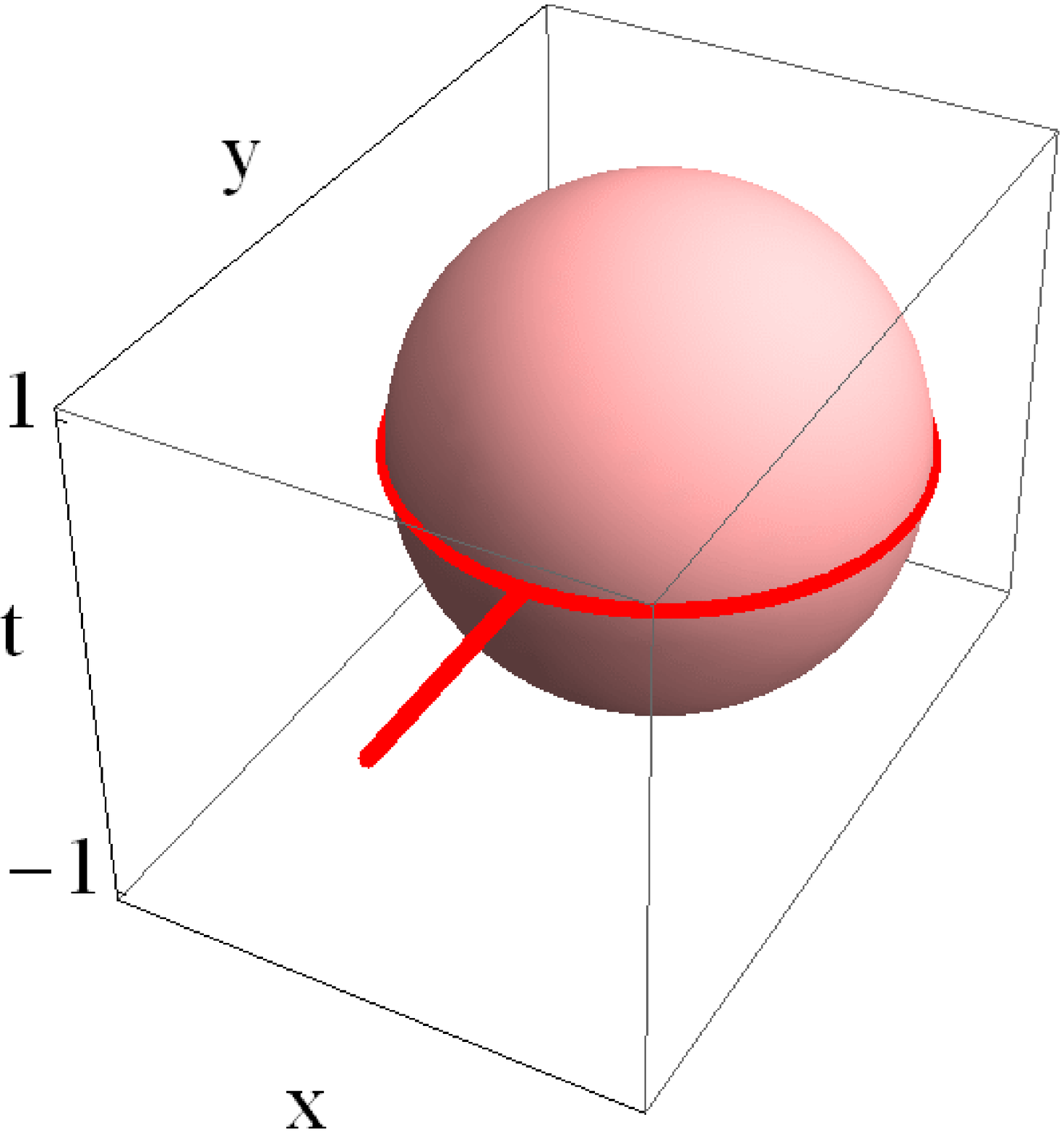}
  \footnotesize Warning: the vertical direction here is the ``time''
  coordinate $t$, so this is an $\bbR^3_{txy}$ picture.
  \newline\ 
}}
\item $t\epsilon_u$ is an
element of $\Kbh(u;)$. As a picture in $\bbR^3_{xyz}$,
it looks like a simplified tennis racket, consisting of a handle, a rim,
and a net. To interpret a tennis racket in $\bbR^4$, we embed $\bbR^3_{xyz}$
into $\bbR^4_{txyz}$ as the hyperplane $[t=0]$, and inside it we place
the handle
and the rim as they were placed in $\bbR^3_{xyz}$. We also make two copies of
the net, the ``upper'' copy and the ``lower'' copy. We place the upper
copy so that its boundary is the rim and so that its interior is pushed
into the $[t>0]$ half-space (relative to the pictured $[t=0]$ placement)
by an amount proportional to the distance from the boundary. Similarly we
place the lower copy, except we push it into the $[t<0]$ half space. Thus
the two nets along with the rim make a 2-sphere in $\bbR^4$, which is
connected to $\infty$ using the handle. Clearly, $t\epsilon_u$ is the
``unknotted balloon''. We orient $t\epsilon_u$ by adopting the conventions
that surfaces drawn in the plane are oriented counterclockwise (unless
otherwise noted) and that when pushed to $4D$, the upper copy retains
the original orientation while the lower copy reverses it.
\end{myitemize}
\begin{myitemize}
\item $\rho^+_{ux}$ is an element of $\Kbh(u;x)$. It is the 4D analog of
the ``positive Hopf link''. Its picture in Figure~\ref{fig:Generators}
should be interpreted in much the same way as the previous two ---
what is displayed should be interpreted as a 3D picture using standard
conventions (what's hidden is ``below''), and then it should be placed
within the $[t=0]$ copy of $\bbR^3_{xyz}$ in $\bbR^4$. This done, the
racket's net should be split into two copies, one to be pushed to $[t>0]$
and the other to $[t<0]$. In $\bbR^3_{xyz}$ it appears as if the hoop
$x$ intersects the balloon $u$ right in the middle. Yet in $\bbR^4$ our
picture represents a legitimate knot as the hoop is embedded in $[t=0]$,
the nets are pushed to $[t\neq 0]$, and the apparent intersection is
eliminated.
\end{myitemize}
\begin{myitemize}
\item $\rho^-_{ux}$ is the ``negative Hopf link''. It is constructed
out of its picture in exactly the same way as $\rho^+_{ux}$. We postpone to
Section~\ref{subsec:Hopf} the explanation of why $\rho^+_{ux}$ is
``positive'' and $\rho^-_{ux}$ is ``negative''.

\end{myitemize}
\end{example}

\begin{example} \label{exa:Sophisticated} Below on the right is a somewhat
more sophisticated example of an rKBH with two balloons labelled $a$ and
$b$ and two hoops labelled with the same labels (hence it is an element
of $\Kbh(a,b;a,b)$). It should be interpreted using the same conventions
as in the previous example, though some further comments are in order:

\parpic[r]{\raisebox{-45mm}{\input{figs/Sophisticated.pstex_t}}}
\noindent$\bullet$ 
The ``crossing'' marked (1) on the right is between two
hoops and in 4D it matters not if it is an overcrossing or an
undercrossing. Hence we did not bother to indicate which of the two it is.
A similar comment applies in two other places.

\noindent$\bullet$ Likewise, crossing (2) is between a 1D strand and a
thin tube, and its sense is immaterial. For no real reason we've drawn
the strand ``under'' the tube, but had we drawn it ``over'', it would
be the same rKBH.  A similar comment applies in two other places.

\noindent$\bullet$ Crossing (3) is ``real'' and is similar to $\rho^-$
in the previous example. Two other crossings in the picture are similar
to $\rho^+$.

\noindent$\bullet$ Crossing (4) was not seen before, though its 4D
meaning should be clear from our interpretation rules: nets are pushed
up (or down) along the $t$ coordinate by an amount proportional to the
distance from the boundary.  Hence the wider net in (4) gets pushed more
than the narrower one, and hence in 4D they do not intersect even though
their projections to 3D do intersect, as the figure indicates. A similar
comment applies in two other places.

\noindent$\bullet$ Our example can be simplified a bit using
isotopies. Most notably, crossing (5) can be eliminated by pulling the
narrow ``$\backslash$'' finger up and out of the wider ``$\slash$''
membrane. Yet note that a similar feat cannot be achieved near (3) and
(4). Over there the wider ``$\slash$'' finger cannot be pulled down and
away from the narrower ``$\backslash$'' membrane and strand without a
singularity along the way.
\end{example}

We can now complete Definition~\ref{def:KBH} by providing the the
definition of ``ribbon embedding''.

\parpic[r]{\input{figs/RibbonSing.pstex_t}}
\begin{definition} \label{def:ribbon} We say that an embedding
of a collection of 2-spheres $S_i$ into $\bbR^4$ (or into $S^4$)
is ``ribbon'' if it can be extended to an immersion $\iota$ of a
collection of 3-balls $B_i$ whose boundaries are the $S_i$'s, so that the
singular set $\Sigma\subset\bbR^4$ of $\iota$ consists of transverse
self-intersections, and so that each connected component $C$ of $\Sigma$
is a ``ribbon singularity'': $\iota^{-1}(C)$ consists of two closed disks
$D_1$ and $D_2$, with $D_1$ embedded in the interior of one of the $B_i$
and with $D_2$ embedded with its interior in the interior of some $B_j$
and with its boundary in $\partial B_j=S_j$. A dimensionally-reduced
illustration is on the right. The ribbon condition does not place any
restriction on the hoops of an rKBH.
\end{definition}

It is easy to verify that all the examples above are ribbon, and that all the
operations we define below preserve the ribbon condition.

There is much literature about ribbon knots in $\bbR^4$. See
e.g.\ \cite{HabiroKanenobuShima:R2K, HabiroShima:R2KII,
KanenobuShima:TwoFiltrationsR2K, Satoh:RibbonTorusKnots,
Watanabe:ClasperMoves, WKO}.

\subsection{Usual tangles and the map $\delta$} \label{subsec:delta}
For the purposes of this paper, a ``usual tangle''\footnote{Better
English would be ``ordinary tangle'', but I want the short form to be
``u-tangle'', which fits better with the ``v-tangles'' and ``w-tangles''
that arise later in this paper.}, or a ``u-tangle'', is a ``framed pure
labelled tangle in a disk''. In detail, it is a piece of an
oriented knot diagram drawn in a disk, having no closed components and
with its components labelled by the elements of some set $S$, with all
regarded modulo the Reidemeister moves R1', R2, and R3:
\[ \input{figs/RMoves.pstex_t} \]

\parpic[r]{\begin{picture}(0,0)%
\includegraphics{figs/uExample.pstex}%
\end{picture}%
%
%  pstex_opts: -m 1.2 
%
\setlength{\unitlength}{4736sp}%
\begingroup\makeatletter\ifx\SetFigFont\undefined%
\gdef\SetFigFont#1#2#3#4#5{%
  \reset@font\fontsize{#1}{#2pt}%
  \fontfamily{#3}\fontseries{#4}\fontshape{#5}%
  \selectfont}%
\fi\endgroup%
\begin{picture}(759,697)(-175,80)
\put(376,539){\makebox(0,0)[b]{\smash{{\SetFigFont{11}{13.2}{\rmdefault}{\mddefault}{\itdefault}{\color[rgb]{0,0,0}a}%
}}}}
\put(  4,223){\makebox(0,0)[b]{\smash{{\SetFigFont{11}{13.2}{\rmdefault}{\mddefault}{\itdefault}{\color[rgb]{0,0,0}b}%
}}}}
\end{picture}%
}
The set of all tangles with components labelled by $S$ is denoted
$\uT(S)$. An example of a member of $\uT(a,b)$ is on the right. Note that
our u-tangles do not have a specific ``up'' direction so they do not form
a category, and that the condition ``no closed components'' prevents them
from being a planar algebra. In fact, $\uT$ carries almost no interesting
algebraic structure. Yet it contains knots (as 1-component tangles)
and more generally, by restricting to a subset, it contains ``pure tangles''
or ``string links''~\cite{HabeggerLin:Classification}. And in the next
section $\uT$ will be generalized to $\vT$ and to $\wT$, which do carry much
interesting structure.

There is a map $\delta\colon\uT(S)\to\Kbh(S;S)$. The picture should
precede the words, and it appears as the left half of
Figure~\ref{fig:deltaExample}.

\begin{figure}
\[ \input{figs/deltaExample.pstex_t}
  \qquad \includegraphics[width=3.5in]{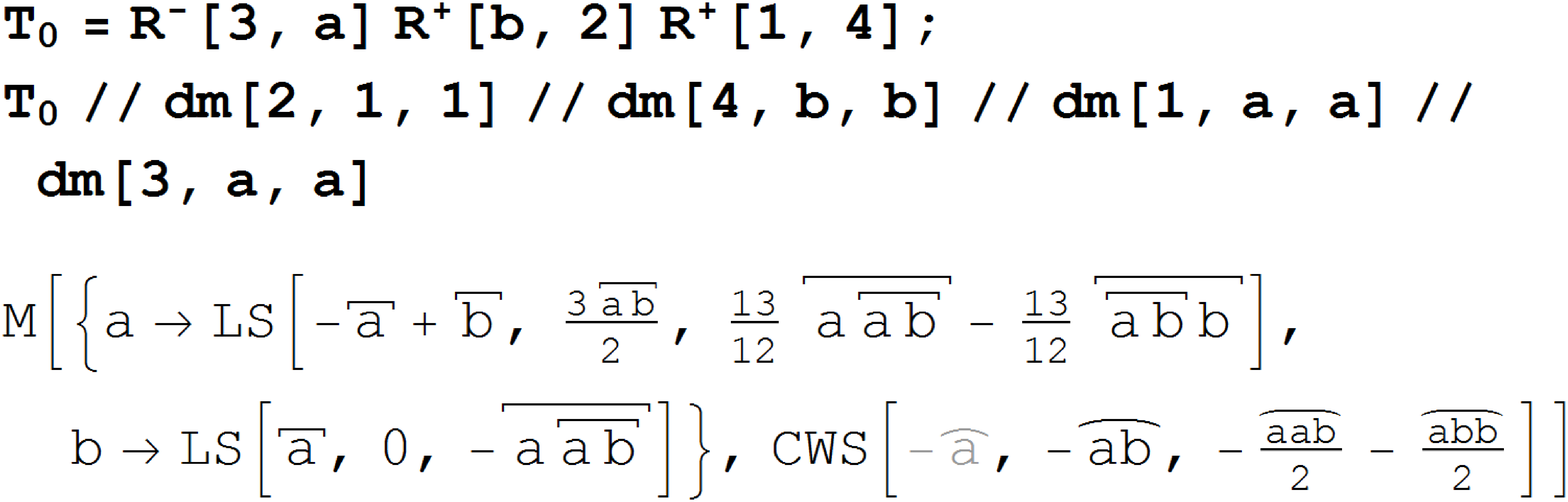}
\]
\caption{A $T_0\mapsto\delta(T_0)$ example, and its invariant $\zeta$
  of Section~\ref{sec:zeta} (computed to degree 3).
} \label{fig:deltaExample}
\end{figure}

In words, if $T\in\uT(S)$, to make $\delta(T)$ we convert each strand $s\in
S$ of $T$ into a pair of parallel entities: a copy of $s$ on the right and
a band on the left ($T$ is a planar diagram and $s$ is oriented, so ``left''
and ``right'' make sense). We cap the resulting band near its beginning and
near its end, connecting the cap at its end to $\infty$ (namely, to outside
the picture) with an extra piece of string --- so that when the bands are
pushed to 4D in the usual way, they become balloons with strings.  
Finally, near the crossings of $T$ we apply the following (sign-preserving)
local rules:
\[ \input{figs/deltaRules.pstex_t}. \]

\begin{proposition} The map $\delta$ is well defined.
\end{proposition}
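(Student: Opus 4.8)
The plan is to unpack the two things that ``well defined'' entails: that $\delta(T)$ is a genuine element of $\Kbh(S;S)$ for every tangle diagram $T$, and that $\delta$ is insensitive to the choice of diagram representing a class in $\uT(S)$, i.e.\ that it respects the moves R1$'$, R2, and R3. The first point is the easier one. The doubling-and-capping recipe turns each oriented strand $s$ into a hoop (the parallel copy on the right) together with a capped band on the left; pushing the capped bands off the plane into the $t$-direction exactly as in the balloon construction of Example~\ref{exa:generators} produces $|S|$ disjoint embedded $2$-spheres, while the hoops, being $1$-dimensional, can be made disjoint from one another and from the balloons by a generic perturbation in $\bbR^4$. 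The bounding $3$-balls of the balloons are the bands thickened along $t$, and their only self-intersections occur where the local crossing rules used to define $\delta$ push one band's interior through another; inspection of those rules shows each such intersection is a ribbon singularity in the sense of Definition~\ref{def:ribbon}, so $\delta(T)\in\Kbh(S;S)$.

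For invariance I would argue locally. Each of R1$'$, R2, R3 is supported inside a small disk $D$ of the tangle diagram, the two sides agreeing outside $D$; correspondingly the two resulting rKBHs agree outside a small box $B\subset\bbR^4$ lying over $D$, and it suffices to exhibit an ambient isotopy of $\bbR^4$ supported near $B$ and fixing $\partial B$ that carries one local picture to the other. The governing principle is that in $\bbR^4$ the hoops are $1$-dimensional and hence link nothing among themselves: as recorded after Definition~\ref{def:KBH}, only their homotopy class in the complement of the balloons matters, so they may be slid freely past one another and past the balloon sheets, while the sheets created at distinct crossings sit at distinct, separable $t$-levels. With this principle R2 and R3 are routine: for R2 the two crossings produce a canceling pair of local models pulled apart by a small push in the $t$-direction, and for R3 the usual planar sliding isotopy lifts verbatim, since the three hoops pass through one another unobstructed and the three balloon-passages commute because they occupy independent $t$-levels.

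The genuine content, and the step I expect to be the main obstacle, is R1$'$, for this is where framing is tracked. Since the hoop and the band of a strand $s$ are parallel copies, each self-crossing (curl) of $s$ becomes a crossing between the hoop labelled $s$ and the balloon labelled $s$; consequently the writhe of $s$ is recorded in $\delta(T)$ as the linking number of that hoop with that balloon. This linking number is a genuine $\bbR^4$-isotopy invariant --- a circle and a $2$-sphere in $\bbR^4$ have a well-defined linking number because the first homology of the complement of the sphere is $\mathbb{Z}$ --- so $\delta$ cannot respect the unframed R1, which changes the writhe by $\pm 1$, whereas it does respect the framed R1$'$, which relates diagrams of equal writhe. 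The verification thus comes down to producing, for the canceling pair of curls in an R1$'$ move, a local $4$-dimensional isotopy that removes them: this is exactly the point at which the framing bookkeeping of the band, as opposed to the free $1$-dimensional bookkeeping of the hoop, must be carried out honestly, and it is why the domain of $\delta$ must be taken to be framed (R1$'$, rather than R1) tangles.
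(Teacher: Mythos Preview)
Your argument is essentially correct, but your emphasis is inverted relative to the paper's. The paper's own proof is a one-paragraph sketch that singles out R3 as the case worth displaying: it draws the band (balloon) part of both sides of R3, observes that the two band diagrams are already isotopic in $\bbR^3$ before inflation to $\bbR^4$, and declares the other moves ``similar or easier.'' You, by contrast, dismiss R2 and R3 as routine and declare R1$'$ to be ``the genuine content.''

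Your linking-number observation is correct and worthwhile --- it explains \emph{why} $\delta$ is defined on framed tangles rather than unframed ones, something the paper does not pause to say --- but it is an obstruction argument, not an isotopy. You end by saying that the verification ``comes down to producing \ldots\ a local $4$-dimensional isotopy,'' which is precisely the thing you have not produced. In practice R1$'$ is easy once you draw it: the opposite curls give a canceling pair of hoop-through-balloon passages (so the hoop pulls free) and a pair of opposite band twists that cancel already in $3$D. Conversely, your R3 argument (``the three balloon-passages commute because they occupy independent $t$-levels'') is vaguer than you suggest; the paper's picture makes clear that the point is a genuine $3$-dimensional band isotopy, not merely independence of $t$-heights.

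So: your verification that $\delta(T)$ is ribbon is a useful addition the paper skips, and your linking-number remark is a nice piece of motivation, but the actual checking of the moves is better served by the paper's direct pictorial approach than by the topological abstractions you invoke.
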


\begin{proof} We need to check that the Reidemeister moves in $\uT$
are carried to isotopies in $\Kbh$. We'll only display the ``band part''
of the third Reidemeister move, as everything else is similar or easier:
\[ \begin{picture}(0,0)%
\includegraphics{figs/deltaR3.pstex}%
\end{picture}%
%
%  pstex_opts: -m 1 
%
\setlength{\unitlength}{3947sp}%
\begingroup\makeatletter\ifx\SetFigFont\undefined%
\gdef\SetFigFont#1#2#3#4#5{%
  \reset@font\fontsize{#1}{#2pt}%
  \fontfamily{#3}\fontseries{#4}\fontshape{#5}%
  \selectfont}%
\fi\endgroup%
\begin{picture}(5499,1449)(-611,-1348)
\put(1351,-511){\makebox(0,0)[b]{\smash{{\SetFigFont{11}{13.2}{\rmdefault}{\mddefault}{\updefault}{\color[rgb]{0,0,0}$\delta$}%
}}}}
\end{picture}%
 \]
The fact that the two ``band diagrams'' above are isotopic before
``inflation'' to $\bbR^4$, and hence also after, is visually obvious. \qed
\end{proof}

\subsection{The Fundamental Invariant and the Near-Injectivity of $\delta$}
\label{subsec:pi}
The ``Fundamental Invariant'' $\pi(K)$ of $K\in\Kbh(u_i;x_j)$
is the triple $(\pi_1(K^c);\, m;\, l)$, where within this triple:
\begin{itemize}
\item The first entry is the fundamental group of the complement of
the balloons of $K$, with basepoint taken to be at $\infty$. 
\item The second entry $m$ is the function $m\colon T\to\pi_1(K^c)$
which assigns to a balloon $u\in T$ its ``base meridian'' $m_u$
--- the path obtained by travelling along the string of $u$ from
$\infty$ to near the balloon, then Hopf-linking with the balloon $u$
once in the positive direction much like in the generator $\rho^+$ of
Figure~\ref{fig:Generators}, and then travelling back to the basepoint
again along the string of $u$.
\item The third entry $l$ is the function $l\colon H\to\pi_1(K^c)$ which
assigns to hoop $x\in H$ its longitude $l_x$ --- it is simply the hoop $x$
itself regarded as an element of $\pi_1(K^c)$.
\end{itemize}

Thus for example, with $\langle \alpha\rangle$ denoting the group
generated by a single element $\alpha$ and following the ``notational
conventions'' of Section~\ref{subsec:Conventions} for ``inline functions'',
\[ \pi(h\epsilon_x)=(1;\,();\,(x\to 1)),
  \qquad\pi(t\epsilon_u)=(\langle \alpha\rangle;\,(u\to \alpha);\,()),
\]
\[
  \text{and}\qquad
  \pi(\rho^\pm_{ux})
  = (\langle \alpha\rangle;\,(u\to \alpha);\,(x\to \alpha^{\pm 1})).
\]

We leave the following proposition as an exercise for the reader:

\begin{proposition} If $T$ is an $\underline{n}$-labelled u-tangle,
then $\pi(\delta(T))$ is the fundamental group of the complement of $T$
(within a 3-dimensional space!), followed by the list of meridians of $T$
(placed near the outgoing ends of the components of $T$), followed by
the list of longitudes of $T$. \qed
\end{proposition}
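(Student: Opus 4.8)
The plan is to show that the complement of the balloons of $\delta(T)$ in $\bbR^4$ deformation retracts onto the complement of $T$ in a suitable $3$-dimensional slice, in a way that carries the base meridians of the balloons to the meridians of the strands of $T$ and the hoops of $\delta(T)$ to the longitudes of $T$; the identification of the three entries of $\pi(\delta(T))$ then follows entry by entry.

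First I would fix coordinates adapted to the construction of $\delta$. The tangle diagram lives in the $xy$-plane, and the only role of the $\delta$-rules near a crossing is to push the two nets apart in the $t$-direction; thus $t$ plays the role of the ``height'' (over/under) coordinate of an honest $3$-dimensional tangle, and I regard $T$ as properly embedded in $\bbR^3_{txy}$. In these coordinates each band is a thin strip running alongside its strand, its inflation $B_s$ is a $2$-sphere bounding a thin $3$-ball $\Omega_s$ that is a neighbourhood of the arc $s$, and the hoop $x_s$ is a parallel pushoff of $s$ lying just to its right. The balloons are pairwise disjoint precisely because the $\delta$-rules resolve the apparent crossings of the bands in $\bbR^4$, exactly as in the discussion of crossings~(3)--(5) of Example~\ref{exa:Sophisticated}.

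Next I would build the retraction in two stages. The first stage collapses the extra ``band-width'' direction together with the $\pm t$ inflation bulge, flowing $\bbR^4\setminus\bigcup_s B_s$ onto $\bbR^3_{txy}\setminus T$; the only point that requires care is the local model near a crossing, where one must check that flowing around the two disjoint inflated nets recovers precisely the over/under configuration of the corresponding crossing of $T$. The second stage is the observation that $\bbR^3_{txy}\setminus T$ is what the proposition calls ``the complement of $T$ within a $3$-dimensional space''. Under this retraction the base meridian $m_{u_s}$ --- a small positively oriented Hopf loop encircling $B_s$ near the capped, outgoing end of its band, based at $\infty$ --- maps to the meridian of the strand $s$ placed near its outgoing end, matching the second entry; and the hoop $x_s$, being an embedded parallel pushoff of $s$, maps to the longitude $l_s$ of $s$, matching the third entry.

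The main obstacle is the crossing computation, which I would phrase most cleanly via Wirtinger presentations. On the $3$-dimensional side, $\pi_1(\bbR^3_{txy}\setminus T)$ has the usual Wirtinger presentation: one meridian generator per arc and one relation per crossing. On the $4$-dimensional side, reading a broken-surface (motion-picture) diagram of the ribbon surface $\bigcup_s B_s$ produced by $\delta$, one meridian generator is attached to each arc of each band and a relation is produced at each ribbon singularity where one band's $3$-ball passes through another; and by inspection of the local $\delta$-rules these generators and relations coincide arc-for-arc and crossing-for-crossing with the Wirtinger data of $T$, with the over-strand balloon meridian conjugating the under-strand one. The hoops, viewed as loops in $\delta(T)^c$, then read off as exactly the Wirtinger longitudes, each under-crossing contributing the meridian it Hopf-links. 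This yields an isomorphism $\pi_1(\delta(T)^c)\cong\pi_1(\bbR^3_{txy}\setminus T)$ sending the listed balloon meridians to the listed tangle meridians and the hoops to the longitudes, which is precisely the assertion. The remaining bookkeeping --- orientations, the signs of the Hopf links, and the placement of basepoints along the strings to $\infty$ --- is routine, and is what makes this a suitable exercise for the reader.
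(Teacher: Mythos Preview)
The paper leaves this proposition as an exercise with no proof, so there is nothing to compare against; your Wirtinger-presentation argument is the standard route and is correct in substance. The key point---that the broken-surface presentation of $\pi_1$ of a ribbon $2$-link complement has one generator per sheet and one conjugation relation per ribbon singularity, and that for $\delta(T)$ these coincide arc-for-arc and crossing-for-crossing with the Wirtinger data of $T$---is exactly right, and the identification of the hoops with longitudes follows because each hoop is the parallel push-off of its strand and therefore reads off the usual longitude word.

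Two small inaccuracies in your geometric preamble are worth fixing, though neither damages the Wirtinger argument. First, your coordinate assignment is swapped relative to the paper: in the conventions of Examples~\ref{exa:generators}--\ref{exa:Sophisticated} the over/under height of the $3$-dimensional tangle is $z$, the pictures live in $\bbR^3_{xyz}$, and $t$ is the fourth coordinate into which the nets are pushed; the $\delta$-rules at a crossing arrange the bands' over/under in $z$, not in $t$. Second, the inflated band $B_s$ does not bound a $3$-ball that is a neighbourhood of the strand $s$: the band sits \emph{beside} $s$ (to its left), so the filling $3$-ball $\Omega_s$ is a thickening of a parallel arc, not a tube around $s$ itself. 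This is harmless---a parallel arc is isotopic to $s$ in the ambient space, and the Wirtinger computation never uses your neighbourhood claim---but the deformation-retraction sketch as written would need to be rephrased accordingly. If you want to keep a geometric statement, the clean one is that $\bbR^4\setminus\bigcup_s B_s$ is homotopy equivalent to $\bbR^3_{xyz}$ minus a collection of arcs isotopic to $T$; you can then drop the retraction paragraph entirely and let the Wirtinger match carry the proof.
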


It is well known (e.g.~\cite[Theorem~6.1.7]{Kawauchi:Survey}) that knots
are determined by the fundamental group of their complements, along with
their ``peripheral systems'', namely their meridians and longitudes regarded
as elements of the fundamental groups of their complements. Thus we have:

\begin{theorem}
When restricted to long knots (which are the same as knots), $\delta$
is injective.~\qed
\end{theorem}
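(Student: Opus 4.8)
The plan is to recognize that $\delta$, followed by the Fundamental Invariant $\pi$, reproduces exactly the peripheral system of a knot, and then to invoke the classical theorem that the peripheral system is a complete knot invariant. First I would note that, because $\pi$ is an isotopy invariant of $\Kbh$, the composite $\pi\circ\delta$ is a well-defined invariant on long knots; hence it suffices to prove that $\pi\circ\delta$ separates long knots, for if $\delta(K_1)=\delta(K_2)$ then in particular $\pi(\delta(K_1))=\pi(\delta(K_2))$, and injectivity of $\pi\circ\delta$ then forces $K_1=K_2$. By the preceding Proposition identifying $\pi(\delta(T))$, for a one-component long knot $K$ the triple $\pi(\delta(K))$ is precisely the fundamental group of the complement of $K$ in $3$-space, equipped with its meridian and its longitude. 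In other words, up to this translation $\pi\circ\delta$ is literally the peripheral-system invariant.

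Next I would appeal to the cited classification result \cite[Theorem~6.1.7]{Kawauchi:Survey}: an oriented knot is determined, up to isotopy, by the isomorphism class of the pair consisting of the fundamental group of its complement together with its oriented meridian and longitude. Reading the equality $\pi(\delta(K_1))=\pi(\delta(K_2))$ as a group isomorphism carrying meridian to meridian and longitude to longitude, this theorem produces an isotopy between the knots underlying $K_1$ and $K_2$. What remains is only the passage between long knots and ordinary knots: joining the two ends of the long strand outside the disk is the standard bijection between long knots and based oriented knots, under which an isotopy of the closed knots lifts to an isotopy of the long representatives. This yields $K_1=K_2$ in $\uT$.

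The hard part will not be any topology --- all of that is imported from the Proposition and from \cite{Kawauchi:Survey} --- but rather the careful matching of equivalence relations. I would take care to confirm that ``$\pi(\delta(K_1))=\pi(\delta(K_2))$'' genuinely encodes an isomorphism preserving the full \emph{oriented} peripheral data (basepoint at $\infty$, oriented meridian, oriented longitude), since only then does Kawauchi's conclusion give an orientation-preserving equivalence of oriented knots rather than a mere homeomorphism of complements; the latter detects neither invertibility nor chirality and would be too weak. I would also check that the longitude manufactured by $\delta$ --- the parallel pushoff of the strand --- carries the framing, so that the framing is recovered as its deviation from the canonical $0$-framed longitude; this is exactly what promotes ``same underlying knot'' to ``same element of $\uT$''. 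Once these bookkeeping identifications are verified, the injectivity of $\delta$ on long knots is immediate, with no further geometric input required.
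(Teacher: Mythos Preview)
Your approach is exactly the paper's: the paper simply places the theorem immediately after the proposition identifying $\pi(\delta(T))$ with the peripheral system, invokes the classical result \cite[Theorem~6.1.7]{Kawauchi:Survey}, and writes \qed. You have reproduced this argument faithfully, and in fact supplied more care than the paper itself --- your remarks about matching oriented peripheral data and about the framed longitude recovering the framing (recall that $\uT$ uses R1$'$, not R1) address points the paper leaves implicit.
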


\begin{remark} A similar map studied by Winter~\cite{Winter:SpunTorusKnots}
is (sometimes) 2 to 1, as it retains less orientation information.
\end{remark}

I expect that $\delta$ is also injective on arbitrary tangles
and that experts in geometric topology would consider this trivial,
but this result would be outside of my tiny puddle.

\subsection{The Extension to v/w-Tangles and the Near-Surjectivity of $\delta$}
\label{subsec:vw}
The map $\delta$ can be extended to ``virtual
crossings''~\cite{Kauffman:VirtualKnotTheory} using the local assignment
\begin{equation} \label{eq:deltaVirtual}
  \raisebox{-6mm}{\begin{picture}(0,0)%
\includegraphics{figs/deltaVirtual.pstex}%
\end{picture}%
%
%  pstex_opts: -m 1.0 
%
\setlength{\unitlength}{3947sp}%
\begingroup\makeatletter\ifx\SetFigFont\undefined%
\gdef\SetFigFont#1#2#3#4#5{%
  \reset@font\fontsize{#1}{#2pt}%
  \fontfamily{#3}\fontseries{#4}\fontshape{#5}%
  \selectfont}%
\fi\endgroup%
\begin{picture}(2424,474)(-86,377)
\put(1651,614){\makebox(0,0)[b]{\smash{{\SetFigFont{11}{13.2}{\rmdefault}{\mddefault}{\updefault}{\color[rgb]{0,0,0}$=$}%
}}}}
\put(676,689){\makebox(0,0)[b]{\smash{{\SetFigFont{11}{13.2}{\rmdefault}{\mddefault}{\updefault}{\color[rgb]{0,0,0}$\delta$}%
}}}}
\end{picture}%
.}
\end{equation}
In a few more words, u-tangles can be extended to ``v-tangles''
by allowing ``virtual crossings'' as on the left hand side
of~\eqref{eq:deltaVirtual}, and then modding out by the ``virtual
Reidemeister moves'' and the ``mixed move'' / ``detour move''
of~\cite{Kauffman:VirtualKnotTheory}%
\footnote{
  In~\cite{Kauffman:VirtualKnotTheory} the mixed / detour move was yet
  unnamed, and was simply ``move (c) of Figure 2''.
}.
One may then observe, as in Figure~\ref{fig:OCUC}, that $\delta$ respects
those moves as well as the ``overcrossings commute'' relation (yet not
the ``undercrossings commute'' relation).  Hence $\delta$ descends to
the space $\wT$ of w-tangles, which are the quotient of v-tangles by
the overcrossings commute relation.

\begin{figure}
\[ \input{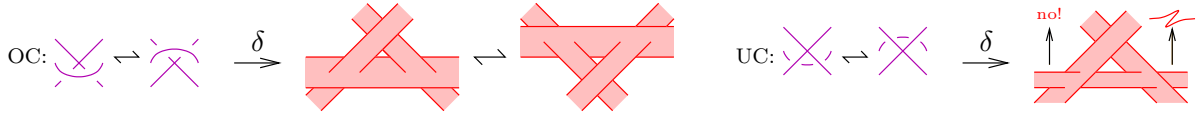} \]
\caption{
  The ``Overcrossing Commute'' (OC) relation and the gist of the proof
  that it is respected by $\delta$, and the ``Undercrossing Commute'' (UC)
  relation and the gist of the reason why it is not respected by $\delta$.
} \label{fig:OCUC}
\end{figure}

A topological-flavoured construction of $\delta$ appears in
Section~\ref{subsec:topdelta}.

The newly extended $\delta\colon\wT\to\Kbh$ cannot possibly be surjective,
for the rKBHs in its image always have an equal number of balloons as hoops,
with the same labels. Yet if we allow the deletion of components, $\delta$
becomes surjective:

\begin{theorem} \label{thm:surjective}
For any KTG $K$ there is some w-tangle $T$ so that $K$ is obtained from
$\delta(T)$ by the deletion of some of its components.
\end{theorem}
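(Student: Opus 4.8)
The plan is to assemble the w-tangle $T$ in two stages --- first producing the balloons of $K$, then its hoops --- and to discard everything unwanted at the very end by deleting components. I will use repeatedly that $\delta$ turns a single strand into a balloon (from the band) running parallel to a hoop (the strand itself, read as a longitude), and that the meridians and longitudes of $\delta(T)$ are computed combinatorially as in the proposition of Section~\ref{subsec:pi}. Two structural facts recalled above are also central: a hoop is intrinsically unknotted and matters only through its class in $\pi_1(K^c)$, and this group is generated by the meridians of the balloons.

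The first and hardest stage is to realize the balloons. These form a ribbon-embedded family of $2$-spheres, so each bounds an immersed $3$-ball meeting the family only in ribbon singularities. The claim is that any such family is the balloon-part of $\delta(T_b)$ for a suitable w-tangle $T_b$. Indeed, a ribbon singularity --- a disk $D_1$ buried in the interior of one ball, met by a sheet whose cap $D_2$ is anchored on another sphere --- is exactly the local datum of one strand crossing beneath another, and the built-in asymmetry of the ribbon condition (interior versus boundary-anchored disks) is precisely the asymmetry ``overcrossings commute, undercrossings do not'' that distinguishes w-tangles from v-tangles. Reading the ribbon presentation off as a welded diagram in this way is the tube construction of Satoh~\cite{Satoh:RibbonTorusKnots}, capped at both ends to give spheres and run in the multi-strand (tangle) setting; it outputs the desired $T_b$. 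I expect this step to be the real obstacle, since it is where the ribbon hypothesis must be fully converted into w-tangle combinatorics.

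With $T_b$ fixed, $\delta(T_b)$ has balloons isotopic to those of $K$, a parallel longitude-hoop for each strand, and meridians that generate $\pi_1(K^c)$. I would then install the hoops of $K$ one by one. Each is a loop in the balloon complement, and because $\pi_1(K^c)$ is meridian-generated its class is carried by a loop weaving beneath the balloons; such a loop is precisely the core of a new w-tangle strand, whose $\delta$-longitude is by definition its class in $\pi_1$. Appending these strands to $T_b$ gives a genuine w-tangle $T$, and the resulting hoops of $\delta(T)$ are homotopic in the complement --- hence isotopic --- to the hoops of $K$.

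It remains to delete the surplus: the parallel hoops of the first stage that do not occur in $K$, and the balloon attached to each hoop-realizing strand of the second stage. Since deleting a component leaves the isotopy classes of the survivors unchanged, what is left is exactly the balloons and hoops of $K$, that is, $K$ itself. The one point still needing care, which I expect to be routine, is that a weaving strand can always be threaded without re-knotting the balloons already in place; but threading a $1$-dimensional strand past $2$-spheres in $\bbR^4$ only records linking and can never alter a balloon's isotopy type, so no obstruction arises.
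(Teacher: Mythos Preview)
Your proposal is correct and follows essentially the same route as the paper's own (sketched) proof: invoke Satoh's Tube theorem~\cite{Satoh:RibbonTorusKnots} to realize the balloons via a w-tangle $T_b$, adjoin further strands whose $\delta$-longitudes realize the hoops, and delete the spurious companions (the extra hoops from stage one and the extra balloons from stage two). Your write-up is more explicit than the paper's --- in particular your justification of the hoop stage via the fact that $\pi_1(K^c)$ is meridian-generated and that hoops matter only up to homotopy is a welcome elaboration of the paper's terse ``simply put them in as extra strands'' --- but there is no genuine difference in strategy.
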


\begin{proof} (Sketch) This is a variant of Theorem~3.1 of
Satoh's~\cite{Satoh:RibbonTorusKnots}. Clearly every knotting of 2-spheres
in $\bbR^4$ can be obtained from a knotting of tubes by capping those
tubes. Satoh shows that any knotting of tubes is in the image of a map he
calls ``Tube'', which is identical to our $\delta$ except our $\delta$ also
includes the capping (good) and an extra hoop component for each balloon
(harmless as they can be deleted). Finally to get the hoops of $K$ simply
put them in as extra strands in $T$, and then delete the spurious balloons
that $\delta$ would produce next to each hoop. \qed
\end{proof}

\section{The Operations} \label{sec:Operations}

\subsection{The Meta-Monoid-Action} \label{subsec:MMAOperations}
Loosely speaking, an rKBH $K$ is a map of several $S^1$'s and several
$S^2$'s into some ambient space. The former (the hoops of $K$) resemble
elements of $\pi_1$, and the latter (the balloons of $K$) resemble
elements of $\pi_2$. In general in homotopy theory, $\pi_1$ and $\pi_2$
are groups, and further, there is an action of $\pi_1$ on $\pi_2$. Thus
we find that on $\Kbh$ there are operations that resemble the group
multiplication of $\pi_1$, and the group multiplication of $\pi_2$,
and the action of $\pi_1$ on $\pi_2$.

\begin{figure}
\[
  \def\KBox{{\parbox{1.5in}{\begin{center}
    $(T;H)=(u,v;x,y)$,
    \newline$\mu=(\lambda;\omega)$ in
    \newline$\FL(T)^H\times\CWr(T)$,
  \end{center}}}}
  \def\hmBox{{\parbox{1.5in}{
    $\lambda\mapsto\lambda\remove\{x,y\}\cup$\hfill\null
    \newline\null\hfill$(z\to\bch(\lambda_x,\lambda_y))$,
    \newline\null\hfill$\omega$ untouched,\hfill\null
  }}}
  \def\tmBox{{\parbox{1.5in}{\begin{center}
    $\mu\mapsto\mu\sslash(u,v\to w)$,
  \end{center}}}}
  \def\thaBox{{\parbox{1.5in}{\begin{center}
    $\mu\mapsto(\lambda;\,\omega+J_u(\lambda_x))\!\sslash\!RC_u^{\lambda_x}$,
  \end{center}}}}
  \def\defsBox{{
    with $C_u^\gamma=(u\mapsto e^{\ad\gamma}(u))$,
    $RC_u^\gamma=(C_u^{-\gamma})^{-1}$, and $J_u(\gamma) =
    \int_0^1ds\,\diver_u\!\left(\gamma \sslash RC_u^{s\gamma}
    \right) \sslash C_u^{-s\gamma}$.
  }}
  \input{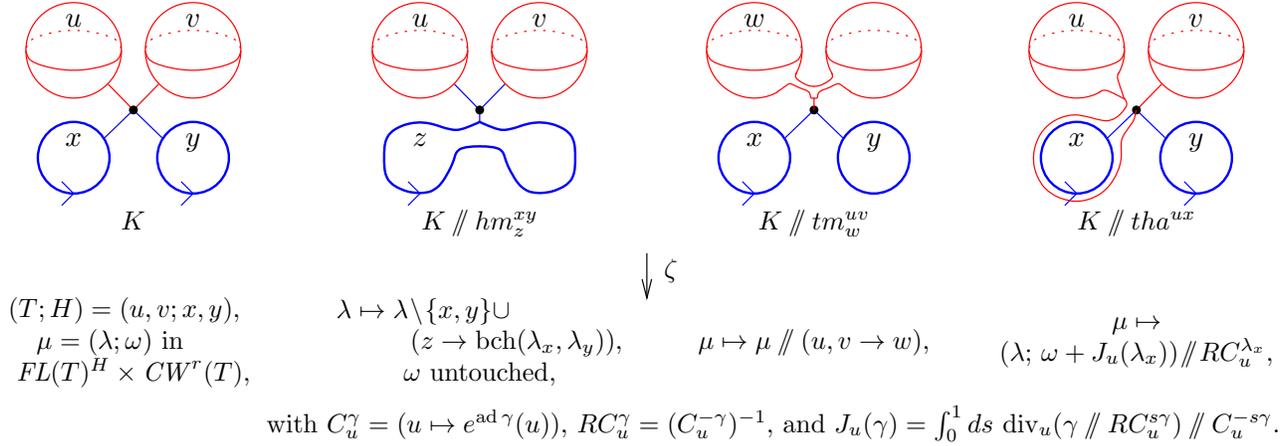}
\]
\caption{
  An rKBH $K$ and the three basic unary operators applied to it. We use
  schematic notation; $K$ may have plenty more components, and it may
  actually be knotted. The lower part of the figure is a summary of the
  main invariant $\zeta$ defined in this paper. See Section~\ref{sec:zeta}.
} \label{fig:mgaops}
\end{figure}

Let us describe these operations more carefully. Let $K\in\Kbh(T;H)$. 

\begin{myitemize}

\item Analogously to the product in $\pi_1$, there is the operation of
``concatenating two hoops''. Specifically, if $x$ and $y$ are two distinct
labels in $H$ and $z$ is a label not in $H$ (except possibly equal to $x$
or to $y$), we let%
\footnote{
  See ``notational conventions'', Section~\ref{subsec:Conventions}.
} $K\sslash hm^{xy}_z$ be $K$ with the $x$ and $y$ hoops removed and
replaced with a single hoop labelled $z$ that traces the path of them
both. See Figure~\ref{fig:mgaops}.

\parpic[r]{\includegraphics[width=0.8in]{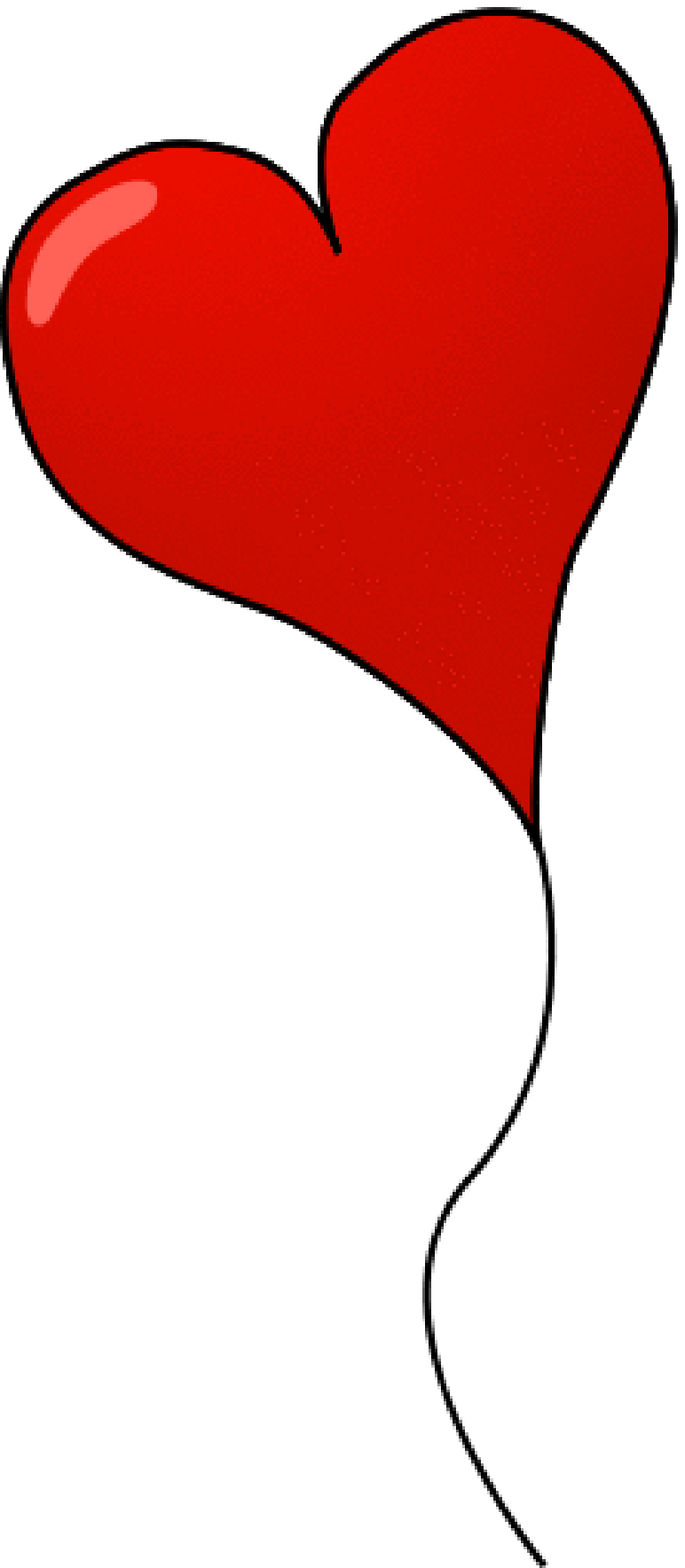}}
\item Analogously to the homotopy-theoretic product of $\pi_2$, there
is the operation of ``merging two balloons''. Specifically, if $u$ and
$v$ are two distinct labels in $T$ and $w$ is a label not in $T$ (except
possibly equal to $u$ or to $v$), we let $K\sslash tm^{uv}_w$ be $K$
with the $u$ and $v$ balloons removed and replaced by a single two-lobed
balloon (topologically, still a sphere!) labelled $w$ which spans them
both. See Figure~\ref{fig:mgaops}, or the even nicer two-lobed balloon
displayed on the right.

\item Analogously to the homotopy-theoretic action of $\pi_1$ on $\pi_2$,
there is the operation $tha^{ux}$ (``tail by head action on $u$ by $x$'')
of re-routing the string of the balloon $u$ to go along the hoop $x$,
as illustrated in Figure~\ref{fig:mgaops}. In balloon-theoretic language,
after the isotopy which pulls the neck of $u$ along its string, this is
the operation of ``tying the balloon'', commonly performed to prevent
the leakage of air (though admittedly, this will fail in 4D).

\end{myitemize}

In addition, $\Kbh$ affords the further unary operations $t\eta^u$ (when
$u\in T$) of ``puncturing'' the balloon $u$ (implying, deleting it) and
$h\eta^x$ (when $x\in H$) of ``cutting'' the hoop $x$ (implying, deleting
it). These two operations were already used in the statement and proof of
Theorem~\ref{thm:surjective}.

\parpic[r]{\parbox{2.5in}{\begin{center}
  \def\astBox{{$
    (\lambda_1;\omega_1)\!\ast\!(\lambda_2;\omega_2)
    \!=\! (\lambda_1\cup\lambda_2;\omega_1+\omega_2)
  $}}
  \input{figs/ConnectedSum.pstex_t}
  \caption{Connected sums.} \label{fig:ConnectedSum}
\end{center}}}
In addition, $\Kbh$ affords the binary operation $\ast$  of ``connected
sum'', sketched on the right (along with its $\zeta$ formulae of
Section~\ref{sec:zeta}). Whenever we have disjoint label sets
$T_1\cap T_2=\emptyset=H_1\cap H_2$, it is an operation
$\Kbh(T_1;H_1)\times\Kbh(T_2;H_2)\to\Kbh(T_1\cup T_2;H_1\cup H_2)$. We
often suppress the $\ast$ symbol and write $K_1K_2$ for $K_1\ast K_2$.

Finally, there are re-labelling operations $h\sigma^a_b$ and $t\sigma^a_b$
on $\Kbh$, which take a label $a$ (either a head or a tail) and rename
it $b$ (provided $b$ is ``new'').

\begin{proposition} \label{prop:MMAProperties}
The operations $\ast$, $t\sigma^u_v$, $h\sigma^x_y$, $t\eta^u$, $h\eta^x$,
$hm^{xy}_z$, $tm^{uv}_w$ and $tha^{ux}$ and the special elements
$t\epsilon_u$ and $h\epsilon_x$ have the following properties:
\begin{itemize}
\item If the labels involved are distinct, the unary operations all commute
  with each other.
\item The re-labelling operations have some obvious properties and
  interactions: $\sigma^a_b\sslash\sigma^b_c=\sigma^a_c$,
  $hm^{xy}_x\sslash h\sigma^x_z=hm^{xy}_z$, etc., and similarly for the
  deletion operations $\eta^a$.
\item $\ast$ is commutative and associative; where it makes sense, it
  bi-commutes with the unary operations ($(K_1\sslash hm^{xy}_z)\ast
  K_2 = (K_1\ast K_2)\sslash hm^{xy}_z$, etc.).
\item $t\epsilon_u$ and $h\epsilon_x$ are ``units'':
\[ (K\ast t\epsilon_u)\sslash tm^{uv}_w=K\sslash t\sigma^v_w,
  \qquad (K\ast t\epsilon_u)\sslash tm^{vu}_w=K\sslash t\sigma^v_w,
\]
\[ (K\ast h\epsilon_x)\sslash hm^{xy}_z=K\sslash h\sigma^y_z,
  \qquad (K\ast h\epsilon_x)\sslash hm^{yx}_z=K\sslash h\sigma^y_z.
\]
\item Meta-associativity of $hm$, similar to the associativity in $\pi_1$:
\begin{equation} \label{eq:hassoc}
  hm^{xy}_x\sslash hm^{xz}_x=hm^{yz}_y\sslash hm^{xy}_x.
\end{equation}
\item Meta-associativity of $tm$, similar to the associativity in $\pi_2$:
\begin{equation} \label{eq:tassoc} 
  tm^{uv}_u\sslash tm^{uw}_u=tm^{vw}_v\sslash tm^{uv}_u.
\end{equation}
\item Meta-actions commute. The following is a special case of the first
  property above, yet it deserves special mention because later in this 
  paper it will be the only such commutativity that is non-obvious to
  verify:
\begin{equation} \label{eq:thatha}
  tha^{ux}\sslash tha^{vy} = tha^{vy}\sslash tha^{ux}.
\end{equation}
\item Meta-action axiom $t$, similar to $(uv)^x=u^xv^x$:
\begin{equation} \label{eq:taction}
  tm^{uv}_w\sslash tha^{wx}=tha^{ux}\sslash tha^{vx}\sslash tm^{uv}_w.
\end{equation}
\item Meta-action axiom $h$, similar to $u^{xy}=(u^x)^y$:
\begin{equation} \label{eq:haction}
  hm^{xy}_z\sslash tha^{uz}=tha^{ux}\sslash tha^{uy}\sslash hm^{xy}_z.
\end{equation}
\end{itemize}
\end{proposition}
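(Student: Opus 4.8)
The plan is to treat every clause as the assertion that two sequences of topological operations produce isotopic rKBHs, and to prove each by exhibiting an explicit isotopy in $\bbR^4$, exactly in the spirit of the well-definedness proof for $\delta$ given above. The identities fall into four groups of increasing difficulty, and I would organize the write-up accordingly.

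First, the trivial and near-trivial clauses. The relabelling and deletion identities ($\sigma^a_b\sslash\sigma^b_c=\sigma^a_c$, the bookkeeping $hm^{xy}_x\sslash h\sigma^x_z=hm^{xy}_z$, and the analogues for $\eta^a$) are immediate from the definitions, since relabelling and deletion only rename or remove components without moving anything. The commutativity of distinct-label unary operations is equally clear: operations carrying different labels are supported near different components of $K$ and hence can be performed in either order, or made literally disjoint by a preliminary isotopy. In particular this already disposes of \eqref{eq:thatha}, which is flagged as subtle only because its later algebraic shadow in the free-Lie world will be hard to verify; topologically it is just a special case of disjoint support. The statements about $\ast$ — commutativity, associativity, and bi-commutation with the unary operations — follow because the connected sum is performed in a small neighbourhood of $\infty$ into which the disjointly-labelled summands reach along their strings, so the summands may be reordered freely and any unary operation, acting on a single summand away from the gluing region, slides past $\ast$.

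Next, the unit and meta-associativity clauses, which are the isotopy-level shadows of the group axioms in $\pi_1$ and $\pi_2$. For the units, $t\epsilon_u$ and $h\epsilon_x$ are the unknotted, unlinked balloon and hoop, so merging $t\epsilon_u$ into another balloon (or concatenating $h\epsilon_x$ with another hoop) and then relabelling is undone by an obvious collapsing isotopy that retracts the trivial piece back along its string to $\infty$. For \eqref{eq:hassoc} and \eqref{eq:tassoc}, concatenating three hoops in the two bracketings traces the same path up to reparametrization, and merging three balloons in the two bracketings produces the same multi-lobed sphere up to reparametrization; in both cases the two results are visibly isotopic, just as associativity of loop-concatenation and of sphere-concatenation holds up to homotopy in $\pi_1$ and $\pi_2$.

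Finally, the two meta-action axioms \eqref{eq:taction} and \eqref{eq:haction}, which I expect to be the main obstacle and which I would present last and most carefully. The guiding principle is the fundamental invariant $\pi$ of Section~\ref{subsec:pi}: under $\pi$, the operation $tha^{ux}$ conjugates the meridian $m_u$ by the longitude $l_x$, merging balloons multiplies meridians, and concatenating hoops multiplies longitudes. Thus \eqref{eq:taction} is the geometric realization of the distributivity $(m_u m_v)^{l_x}=m_u^{l_x}m_v^{l_x}$ of the $\pi_1$-action over the $\pi_2$-product, while \eqref{eq:haction} realizes $m_u^{l_x l_y}=(m_u^{l_x})^{l_y}$, successive action equalling action by the product. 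This computation is not by itself a proof — the completeness of $\pi$ is established above only for long knots, not for general rKBHs — but it pins down exactly what the isotopy must accomplish and serves as a reliable check. The genuine difficulty is in \eqref{eq:taction}: the merged balloon $w$ carries a single neck and string, and one must verify that re-routing that one string along $x$ agrees, up to isotopy, with re-routing the two separate strings of $u$ and $v$ along $x$ before merging. The care needed is entirely in reconciling how the two-lobed balloon's neck is attached to $\infty$ in the two constructions; once the necks are brought into standard position the required isotopy, which drags the shared portion of the strings along $x$ in parallel, becomes visually transparent in the manner of Figure~\ref{fig:mgaops}.
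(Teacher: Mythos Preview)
Your proposal is correct and follows the same approach as the paper's own proof—direct verification by exhibiting isotopies. In fact you are considerably more thorough: the paper's proof says only that the first four items ``say almost nothing'' and that the remaining four, while deserving attention because their later algebraic analogues are hard, are in the current topological context ``straightforward,'' and leaves it at that. Your organization into trivial, unit/associativity, and meta-action clauses, and your use of the fundamental invariant $\pi$ as a sanity check for \eqref{eq:taction} and \eqref{eq:haction}, are helpful additions rather than deviations.
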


\begin{proof} The first four properties say almost nothing and we did not
even specify them in full%
\footnote{
  We feel that the clarity of this paper is enhanced by this omission.
}.
The remaining four deserve attention, especially in the light of the
fact that the verification of their analogs later in this paper will
be non-trivial. Yet in the current context, their verification is
straightforward. \qed
\end{proof}

Later we will seek to construct invariants of rKBH's by specifying their
values on some generators and by specifying their behaviour under our list
of operations. Thus it is convenient to introduce a name for the algebraic
structure of which $\Kbh$ is an instance:

\begin{definition} \label{def:MMA}
A meta-monoid-action (MMA) $M$ is a collections of sets $M(T;H)$,
one for each pair of finite sets of labels $T$ and $H$, along with
partially-defined operations%
\footnote{
  $tm^{uv}_w$, for example, is defined on $M(T;H)$ exactly when
  $u,v\in T$ yet $w\not\in T\remove\{u,v\}$. All other operations behave similarly.
}
$\ast$, $t\sigma^u_v$, $h\sigma^x_y$, $t\eta^u$, $h\eta^x$, $hm^{xy}_z$,
$tm^{uv}_w$ and $tha^{ux}$, and with special elements $t\epsilon_u\in
M(\{u\};\emptyset)$ and $h\epsilon_x\in M(\emptyset;\{x\})$, which
together satisfy the properties in Proposition~\ref{prop:MMAProperties}.
\end{definition}

For the rationale behind the name ``meta-monoid-action'' see
Section~\ref{subsec:meta-naming}. In Section~\ref{ssec:MetaHopf}
we note that $\Kbh$ in fact has the further structure making it a
meta-group-action (or more precisely, a meta-Hopf-algebra-action).

\subsection{The Meta-Monoid of Tangles and the Homomorphism $\delta$.}
\label{subsec:WisMG}
Our aim in this section is to show that the map $\delta\colon\wT\to\Kbh$
of Sections~\ref{subsec:delta} and~\ref{subsec:vw}, which maps w-tangles
to knotted balloons and hoops, is a ``homomorphism''. But first we have
to discuss the relevant algebraic structures on $\wT$ and on $\Kbh$.

\parpic[r]{\input{figs/mabc.pstex_t}}
$wT$ is a ``meta-monoid'' (see Section~\ref{subsubsec:MetaMonoids}). Namely,
for any finite set $S$ of ``strand labels'' $\wT(S)$ is a set, and whenever
we have a set $S$ of labels and three labels $a\neq b$ and $c$ not in it,
we have the operation $m^{ab}_c\colon\wT(S\cup\{a,b\})\to\wT(S\cup\{c\})$
of ``concatenating strand $a$ with strand $b$ and calling the resulting
strand $c$''. See the picture on the right, and note that while on $\uT$ the
operation $m^{ab}_c$ would be defined only if the head of $a$ happens to be
adjacent to the tail of $b$, on $\vT$ and on $\wT$ this operation is always
defined, as the head of $a$ can always be brought near the tail of $b$ by
adding some virtual crossings, if necessary. $\wT$ trivially also carries the
rest of the necessary structure to form a meta-monoid --- namely, strand
relabelling operations $\sigma^a_b$, strand deletion operations $\eta^a$, and
a disjoint union operation $\ast$, and ``units'' $\epsilon_a$ (tangles with
a single unknotted strand labelled $a$).

It is easy to verify the associativity property (compare with
Equation~\eqref{eq:assoc} of Section~\ref{subsubsec:monoids}):
\[ m^{ab}_a\sslash m^{ac}_a = m^{bc}_b\sslash m^{ab}_a:
  \qquad
  \begin{array}{c}\input{figs/massoc.pstex_t}\end{array}.
\]
It is also easy to verify that if a tangle $T\in\wT(a,b)$ is non-split, then
$T\neq(T\sslash\eta^b)\ast(T\sslash\eta^a)$, so in the sense of
Section~\ref{subsubsec:MetaMonoids}, $\wT$ is non-classical.

\parpic[r]{\parbox{2in}{\footnotesize \ \newline
  {\bf Solution of Riddle~\ref{riddle:piT}.} \label{sol:piT}
  $\pi_T\cong\pi_1\ltimes\pi_2$ (a semi-direct product!), so if you know
  all about $\pi_1$ and $\pi_2$ (and the action of $\pi_1$ on $\pi_2$),
  you know all about $\pi_T$.
}}
$\Kbh$ is an analog of both $\pi_1$ and $\pi_2$. In homotopy theory,
the group $\pi_1$ acts on $\pi_2$ so one may form the semi-direct product
$\pi_1\ltimes\pi_2$. In a similar manner, one may put a ``combined''
multiplication on that part of $\Kbh$ in which the balloons and the hoops
are matched together. More precisely, given a finite set of labels $S$, let
$\Kbhd(S):=\Kbh(S;S)$ be the set of rKBHs whose balloons and whose hoops are
both labelled with labels in $S$. Then define
$dm^{ab}_c\colon\Kbhd(S\cup\{a,b\})\to\Kbhd(S\cup\{c\})$ (the prefix $d$ is
for ``diagonal'', or ``double'') by
\begin{equation} \label{eq:dm}
  dm^{ab}_c = tha^{ab} \sslash tm^{ab}_c \sslash hm^{ab}_c.
\end{equation}
It is a routine exercise to verify that the properties
\eqref{eq:hassoc}--\eqref{eq:haction} of $hm$, $tm$, and $tha$ imply that
$dm$ is meta-associative:
\[ dm^{ab}_a\sslash dm^{ac}_a = dm^{bc}_b\sslash dm^{ab}_a. \]
Thus $dm$ (along with ``diagonal'' $\eta$'s and $\sigma$'s and an unmodified
$\ast$) puts a meta-monoid structure on $\Kbhd$.

\parpic[r]{\input{figs/deltahomo.pstex_t}}
\begin{proposition} $\delta\colon\wT\to\Kbhd$ is a meta-monoid homomorphism.
(A rough picture is on the right: in the picture $a$ and $b$
are strands within the {\em same} tangle, and they may be knotted with each
other and with possible further components of that tangle). \qed
\end{proposition}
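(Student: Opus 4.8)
The plan is to check that $\delta$ intertwines each piece of the meta-monoid structure on $\wT$ with the corresponding piece on $\Kbhd$: the disjoint union $\ast$, the relabellings $\sigma^a_b$, the deletions $\eta^a$, the units $\epsilon_a$, and --- the only substantial point --- the multiplication $m^{ab}_c$. Because $\delta$ is defined by purely local replacement rules (each strand becomes a hoop together with a capped band, with prescribed local rules at the crossings), its compatibility with $\ast$, $\sigma^a_b$, $\eta^a$ and the units is visually immediate: these operations act either away from any crossing or else by renaming or forgetting whole components, and $\delta$ manifestly commutes with such global, component-wise bookkeeping. I would dispose of these cases in a sentence each and concentrate on the claim
\[
  \delta(T\sslash m^{ab}_c)=\delta(T)\sslash dm^{ab}_c
  =\delta(T)\sslash\bigl(tha^{ab}\sslash tm^{ab}_c\sslash hm^{ab}_c\bigr),
\]
which is exactly the content illustrated by the picture on the right.

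First I would normalize the situation. On $\wT$ the operation $m^{ab}_c$ is performed by bringing the head of $a$ next to the tail of $b$ using virtual crossings and then splicing. Since $\delta$ is already well-defined on $\wT$ --- in particular it respects the detour/mixed move, the virtual crossing mapping under~\eqref{eq:deltaVirtual} to the local picture in which the two entities cross without any $4$-dimensional linking --- the value $\delta(T\sslash m^{ab}_c)$ does not depend on how the two ends are brought together. Hence I may assume the head of $a$ already sits adjacent to the tail of $b$, and the entire discrepancy between the two sides of the displayed equation is concentrated in an arbitrarily small neighbourhood of the splice. Away from that neighbourhood the two sides agree on the nose, so the problem reduces to a local, essentially two-strand computation.

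Next I would unpack $dm^{ab}_c$ factor by factor and match each factor to a geometric feature of the splice. In $\delta(T)$ the right-hand copies of the strands $a$ and $b$ are two hoops; splicing $a$ to $b$ joins these hoops end-to-end, which is precisely $hm^{ab}_c$. The left-hand bands of $a$ and $b$ are two capped balloons; after the splice the two interior caps (at the head of $a$ and at the tail of $b$) disappear and the two bands fuse into the single long two-lobed band that $\delta$ assigns to the strand $c$ --- this is $tm^{ab}_c$. The one nontrivial factor is $tha^{ab}$: in $\delta(T)$ the neck (string) of the $a$-balloon runs from the cap at the head of $a$ straight out to $\infty$, whereas in $\delta(T\sslash m^{ab}_c)$ the band simply continues along the portion of $c$ coming from $b$. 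Reconciling the two requires dragging the neck of the $a$-balloon along the hoop $b$ before the balloons can be merged, and this dragging is exactly $tha^{ab}$. Assembling the three local identifications in the order $tha^{ab}\sslash tm^{ab}_c\sslash hm^{ab}_c$ then reproduces $\delta(T\sslash m^{ab}_c)$.

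The hard part, and where I would spend the most care, is precisely this re-routing of the $a$-balloon's neck. Because $a$ and $b$ may be genuinely knotted with each other and with the rest of $T$, one must check that sliding the neck along $b$ is an honest isotopy in $\bbR^4$ --- that $tha^{ab}$ really is the operation advertised in Figure~\ref{fig:mgaops} --- and not merely a manipulation valid near the splice point. Here the ribbon hypothesis, together with the fact that in $4$-space a $1$-dimensional neck can always be slid off any $2$-dimensional sheet it crosses, does the work: the neck can be pushed along $b$ without creating an irremovable self-intersection. Once this single isotopy is justified, the identifications of $hm$ and $tm$ are combinatorially forced, and the homomorphism property follows.
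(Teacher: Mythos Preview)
Your proof is correct and is exactly the unpacking of the picture that the paper alludes to; note that the paper itself supplies no argument beyond the picture and a \qed, so you have in fact written out what the paper leaves implicit. Your decomposition of $dm^{ab}_c=tha^{ab}\sslash tm^{ab}_c\sslash hm^{ab}_c$ into the three local moves at the splice point---concatenate the hoops, fuse the bands, and beforehand drag the string of balloon~$a$ along the hoop~$b$ so that the two balloons can be merged into a single continuous band---is precisely the intended reading.

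One small overstatement: in your last paragraph you invoke the ribbon hypothesis to justify the isotopy $tha^{ab}$. That hypothesis is not needed here. The string of a balloon is $1$-dimensional, and in $\bbR^4$ a $1$-dimensional arc can always be isotoped freely past $2$-dimensional sheets (this is already noted in Section~\ref{subsec:kbh}: the strings ``only matter as homotopy classes of paths in the complement of the balloons''). The ribbon condition constrains the $2$-spheres, not the strings, and plays no role in this particular step.
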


\subsection{Generators and Relations for $\Kbh$}
\label{subsec:genrels}

It is always good to know that a certain algebraic structure is finitely
presented. If we had a complete set of generators and relations for
$\Kbh$, for example, we could define a ``homomorphic invariant'' of rKBHs
by picking some target MMA $\calM$ (Definition~\ref{def:MMA}), declaring
the values of the invariant on the generators, and verifying that the
relations are satisfied.  Hence it's good to know the following:

\begin{theorem} \label{thm:Generators} The MMA $\Kbh$ is generated (as an
MMA) by the four rKBHs $h\epsilon_x$, $t\epsilon_u$, and $\rho^{\pm}_{ux}$
of Figure~\ref{fig:Generators}.
\end{theorem}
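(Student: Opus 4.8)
The plan is to assemble three facts already in place: the near-surjectivity of $\delta$ (Theorem~\ref{thm:surjective}), the homomorphism property of $\delta\colon\wT\to\Kbhd$, and a presentation of the meta-monoid $\wT$ by crossings. Write $\mathcal{G}$ for the sub-MMA of $\Kbh$ generated by $h\epsilon_x$, $t\epsilon_u$, and $\rho^\pm_{ux}$; the task is to show $\mathcal{G}=\Kbh$.

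First I would reduce to the image of $\delta$. By Theorem~\ref{thm:surjective}, every $K\in\Kbh$ is obtained from $\delta(T)$, for some w-tangle $T$, by deleting some of its components. The component deletions are the MMA operations $t\eta^u$ and $h\eta^x$, so if $\delta(T)\in\mathcal{G}$ for all $T\in\wT$ then also $K\in\mathcal{G}$. It therefore suffices to treat elements of the form $\delta(T)$.

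Next I would push the problem down to the generators of $\wT$. As a meta-monoid --- under $m^{ab}_c$, $\sigma$, $\eta$, $\ast$, and the units $\epsilon_a$ --- the set $\wT$ is generated by the two-strand single-crossing tangles $X^\pm_{ab}$: any w-tangle is assembled from finitely many crossings, and such an assembly is built from disjoint copies of the $X^\pm_{ab}$ by concatenating matching strand ends with $m$, the virtual crossings needed to align those ends being absorbed into the definition of $m$ on $\wT$. Since $\delta$ is a meta-monoid homomorphism and the diagonal product is $dm^{ab}_c=tha^{ab}\sslash tm^{ab}_c\sslash hm^{ab}_c$ --- a composite of MMA operations --- it follows that every $\delta(T)$ lies in the sub-MMA generated by $\delta(\epsilon_a)$ and the $\delta(X^\pm_{ab})$. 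The theorem thus reduces to placing these finitely many images inside $\mathcal{G}$.

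Finally I would evaluate $\delta$ on the generators of $\wT$ directly from the local $\delta$-rules. The trivial strand gives $\delta(\epsilon_a)=t\epsilon_a\ast h\epsilon_a$, which is in $\mathcal{G}$ by definition. A single crossing produces exactly one Hopf-type linking between one strand's balloon and the other strand's hoop, together with one unlinked trivial balloon and one unlinked trivial hoop; hence, after relabelling and with the sign read off the crossing, $\delta(X^\pm_{ab})=\rho^\pm_{ux}\ast t\epsilon_{u'}\ast h\epsilon_{x'}$ for the appropriate labels, again in $\mathcal{G}$. This finishes the argument. I expect the only delicate point to be this last computation: one must verify carefully, against the orientation and crossing-sign conventions of Figure~\ref{fig:Generators} and the $\delta$-rules, that a positive (resp.\ negative) crossing yields precisely $\rho^+$ (resp.\ $\rho^-$) with the correct pairing of balloon and hoop labels. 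Everything else is bookkeeping in operations already known to be MMA operations.
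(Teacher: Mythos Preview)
Your proposal is correct and follows essentially the same route as the paper: reduce to the image of $\delta$ via Theorem~\ref{thm:surjective} and the deletion operations, then use that $\delta$ is a meta-monoid homomorphism with $dm$ expressible via MMA operations to reduce to $\delta$ of the generators of $\wT$, and finally identify those images as products of $\rho^\pm$, $t\epsilon$, and $h\epsilon$. The only cosmetic difference is that the paper lists the virtual crossing explicitly among the generators of $\wT$ and computes $\delta(\virtualcrossing_{ab})=t\epsilon_a h\epsilon_a t\epsilon_b h\epsilon_b$, whereas you (equivalently) absorb it into the meta-monoid structure via $\epsilon_a\ast\epsilon_b$; the paper also pins down the labels precisely as $\delta(\overcrossing_{ab})=\rho^+_{ab}\,t\epsilon_b\,h\epsilon_a$ and $\delta(\undercrossing_{ab})=\rho^-_{ba}\,t\epsilon_a\,h\epsilon_b$, which is exactly the verification you flagged as the one delicate point.
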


\begin{proof} By Theorem~\ref{thm:surjective} and the fact that the MMA
operations include component deletions $t\eta^u$ and $h\eta^x$ it
follows that $\Kbh$ is generated by the image of $\delta$. By the previous
proposition and the fact~\eqref{eq:dm} that $dm$ can be written in terms
of the MMA operations of $\Kbh$, it follows that $\Kbh$ is generated by
the $\delta$-images of the generators of $\wT$. But the generators of
$\wT$ are the virtual crossing $\underset{a\ b}{\virtualcrossing}$ and the
right-handed and left-handed crossings $\underset{a\ b}{\overcrossing}$
and $\underset{a\ b}{\undercrossing}$, and so the theorem follows from the
following easily verified assertions:
$\delta\left(\underset{a\ b}{\virtualcrossing}\right) =
t\epsilon_a h\epsilon_a t\epsilon_b h\epsilon_b$,
$\delta\left(\underset{a\ b}{\overcrossing}\right) =
\rho^+_{ab}t\epsilon_b h\epsilon_a$, and
$\delta\left(\underset{a\ b}{\undercrossing}\right) =
\rho^-_{ba}t\epsilon_a h\epsilon_b$. \qed
\end{proof}

We now turn to the study of relations. Our first is the hardest and most
significant, the ``Conjugation Relation'', whose name is inspired by
the group theoretic relation $vu^v=uv$ (here $u^v$ denotes group
conjugation, $u^v=v^{-1}uv$). Consider the following equality:

\[ \input{figs/ConjugationRelation.pstex_t} \]

Easily, the rKBH on the very left is $\rho^+_{ux}(\rho^+_{vy}\rho^+_{wz}\sslash
tm^{vw}_v)\sslash hm^{xy}_x$ and the one on the very right is
$(\rho^+_{vx}\rho^+_{wz}\sslash tm^{vw}_v)\rho^+_{uy}\sslash hm^{xy}_x$,
and so
\begin{equation} \label{eq:ConjugationRelation}
  \rho^+_{ux}\rho^+_{vy}\rho^+_{wz}
    \sslash tm^{vw}_v\sslash hm^{xy}_x\sslash tha^{uz}
  = \rho^+_{vx}\rho^+_{wz}\rho^+_{uy}
    \sslash tm^{vw}_v\sslash hm^{xy}_x.
\end{equation}

\begin{definition} \label{def:Kbhz} Let $\Kbhz$ be the MMA freely
generated by symbols $\rho^\pm_{ux}\in\Kbhz(u;x)$, modulo
the following relations:
\begin{itemize}
\item Relabelling: $\rho^\pm_{ux}\sslash h\sigma^x_y\sslash
  t\sigma^u_v=\rho^\pm_{vy}$.
\item Cutting and puncturing: $\rho^\pm_{ux}\sslash h\eta^x=t\epsilon_u$
and $\rho^\pm_{ux}\sslash t\eta^u=h\epsilon_x$.
\item Inverses: $\rho^+_{ux}\rho^-_{vy}\sslash tm^{uv}_w\sslash
hm^{xy}_z=t\epsilon_w h\epsilon_z$.
\item Conjugation relations: for any $s_{1,2}\in\{\pm\}$,
\[
  \rho^{s_1}_{ux}\rho^{s_2}_{vy}\rho^{s_2}_{wz}
    \sslash tm^{vw}_v\sslash hm^{xy}_x\sslash tha^{uz}
  = \rho^{s_2}_{vx}\rho^{s_2}_{wz}\rho^{s_1}_{uy}
    \sslash tm^{vw}_v\sslash hm^{xy}_x.
\]
\item Tail-commutativity: on any inputs, $tm^{uv}_w=tm^{vu}_w$.
\item Framing independence:
\hfill$\displaystyle \rho^\pm_{ux}\sslash tha^{ux} = \rho^\pm_{ux}.$
\hfill\refstepcounter{equation}(\theequation)\label{eq:FI}
\end{itemize}
\end{definition}

The following proposition, whose proof we leave as an exercise, says that
$\Kbhz$ is a pretty good approximation to $\Kbh$:

\begin{proposition} \label{prop:pidelta} The obvious maps
$\pi\colon\Kbhz\!\to\!\Kbh$ and $\delta\colon\wT\!\to\!\Kbhz$ are well
defined. \qed
\end{proposition}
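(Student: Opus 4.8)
The plan is to treat the two maps separately; in each case well-definedness reduces to verifying that the defining relations of a presentation are respected, and most of the geometric content has effectively been supplied already.

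For $\pi\colon\Kbhz\to\Kbh$ I would argue as follows. By Definition~\ref{def:Kbhz}, $\Kbhz$ is the free MMA on the symbols $\rho^\pm_{ux}$ modulo an explicit finite list of relation families, so producing an MMA homomorphism into $\Kbh$ amounts to sending each symbol $\rho^\pm_{ux}$ to the geometric Hopf-link rKBH $\rho^\pm_{ux}\in\Kbh(u;x)$ of Figure~\ref{fig:Generators} and then checking that these images satisfy every relation inside $\Kbh$. I would dispatch the six families one at a time, each by an elementary isotopy of balloons and hoops in $\bbR^4$: relabelling, tail-commutativity, and framing independence ($\rho^\pm_{ux}\sslash tha^{ux}=\rho^\pm_{ux}$, since re-routing the balloon's string along its own linked hoop is an isotopy) are immediate; cutting and puncturing ($\rho^\pm_{ux}\sslash h\eta^x=t\epsilon_u$ and $\rho^\pm_{ux}\sslash t\eta^u=h\epsilon_x$) hold because deleting either component of a Hopf link unknots the other; the Inverses relation is the visibly cancelling isotopy of a positive against a negative Hopf link; and the Conjugation relations are exactly the geometric equality~\eqref{eq:ConjugationRelation}, established together with its sign variants by the figure preceding Definition~\ref{def:Kbhz}.

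For $\delta\colon\wT\to\Kbhz$ I would use that $\wT$ is generated as a meta-monoid by the virtual, positive, and negative crossings, subject to the virtual Reidemeister moves, the detour move, the moves R1', R2, R3, and the overcrossings-commute (OC) relation. Recalling from the proof of Theorem~\ref{thm:Generators} the values $\delta(\virtualcrossing)=t\epsilon_a h\epsilon_a t\epsilon_b h\epsilon_b$, $\delta(\overcrossing)=\rho^+_{ab}t\epsilon_b h\epsilon_a$, and $\delta(\undercrossing)=\rho^-_{ba}t\epsilon_a h\epsilon_b$ --- all of which already make sense in $\Kbhz$, since a free MMA contains the units $t\epsilon$ and $h\epsilon$ --- I take these as the definition of $\delta$ on generators and verify that each relation of $\wT$ maps to a consequence of the relations of $\Kbhz$. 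The bookkeeping is that composition of strands in $\wT$ corresponds to the diagonal multiplication $dm^{ab}_c=tha^{ab}\sslash tm^{ab}_c\sslash hm^{ab}_c$ of~\eqref{eq:dm}. The virtual moves and the detour move are then trivial, because virtual crossings go to split products of units; R1' translates into framing independence~\eqref{eq:FI}; R2 into the Inverses relation; and R3 and OC translate, after everything is pushed through $dm$, into the Conjugation relation (respectively, into an instance of it together with the commutativity~\eqref{eq:thatha}).

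The main obstacle I anticipate is precisely this last step: matching the R3 and OC moves against the Conjugation relation. This is not conceptually deep --- it is the same computation underlying the geometric assertions that $\delta$ respects these moves (the R3 band-diagram picture and Figure~\ref{fig:OCUC}) --- but carrying it out algebraically in $\Kbhz$ requires expanding both sides through $dm^{ab}_c=tha^{ab}\sslash tm^{ab}_c\sslash hm^{ab}_c$, tracking carefully which $\rho$'s and units appear on each side, and recognizing the resulting identity as one of the signed conjugation relations of Definition~\ref{def:Kbhz}. Everything else is routine, which is why it is reasonable to leave the full verification as an exercise.
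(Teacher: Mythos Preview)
Your approach is correct and is precisely the exercise the paper intends; the paper itself gives no proof beyond the \qed, leaving the verification to the reader. Your identification of the structural obligations---check that the $\Kbhz$-relations hold geometrically in $\Kbh$ for $\pi$, and check that the $\wT$-relations follow from the $\Kbhz$-relations for $\delta$---together with your pinpointing of the R3/OC $\leftrightarrow$ Conjugation correspondence as the only nontrivial step, matches the intended argument.
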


\begin{conjecture} \label{conj:Kbhz}
The projection $\pi\colon\Kbhz\to\Kbh$ is an isomorphism.
\end{conjecture}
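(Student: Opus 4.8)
The plan is to prove this in two stages: surjectivity, which is immediate, and injectivity, which is the substance of the conjecture and the reason it remains a conjecture. For surjectivity, recall from Theorem~\ref{thm:Generators} that $\Kbh$ is generated as an MMA by $h\epsilon_x$, $t\epsilon_u$, and $\rho^\pm_{ux}$. All of these lie in the image of $\pi$: the $\rho^\pm_{ux}$ are the generators of $\Kbhz$ by construction, and the cutting/puncturing relations $\rho^\pm_{ux}\sslash h\eta^x = t\epsilon_u$ and $\rho^\pm_{ux}\sslash t\eta^u = h\epsilon_x$ express the two $\epsilon$'s in terms of them. Since $\pi$ is a homomorphism of MMAs, its image is a sub-MMA containing a generating set, hence all of $\Kbh$; thus $\pi$ is onto.

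Injectivity is the real content. Abstractly it asserts that the families of relations in Definition~\ref{def:Kbhz} are a \emph{complete} set of relations for $\Kbh$: every identity among the generators that holds geometrically in $\bbR^4$ is an algebraic consequence of those we have imposed. The natural route is a Reidemeister-style argument. First I would fix a diagrammatic calculus for rKBHs --- broken-surface or ``tube'' diagrams in a $3$-dimensional projection --- in which each generator $\rho^\pm_{ux}$ is a single elementary piece, so that an arbitrary rKBH is read off as a word in the MMA operations applied to the generators. Then I would invoke (or prove) a \textbf{Reidemeister theorem}: two such diagrams represent isotopic rKBHs if and only if they are connected by a finite list of local moves. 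Injectivity then reduces to the purely algebraic verification that each move on the list is a consequence of the defining relations of $\Kbhz$ --- R2-type cancellations come from the Inverses relation, the triple-point and overcrossings-commute moves from the Conjugation relations, the framing move from~\eqref{eq:FI}, commutativity of merging two sheets from Tail-commutativity, and the moves that slide a cap or cut a hoop from cutting/puncturing.

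The hard part --- indeed the only genuinely missing ingredient --- is the Reidemeister theorem for ribbon-knotted tubes (equivalently, after capping, balloons) in $\bbR^4$; no such theorem is available in the form needed here. The most promising way to obtain it is to go through w-tangle theory: w-tangles \emph{do} have a Reidemeister presentation (the virtual and mixed moves together with overcrossings-commute), and Satoh's ``Tube'' map, essentially our $\delta$ of Section~\ref{subsec:delta}, is surjective onto ribbon tubes (this is the geometric input to Theorem~\ref{thm:surjective}). What is needed in addition is that the Tube map is \emph{faithful} in the appropriate sense, i.e.\ that every isotopy of ribbon tubes is realized by the w-tangle moves; faithfulness statements of this flavour for w-string-links and ribbon tubes are the natural input. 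Granting such faithfulness, the w-tangle Reidemeister moves descend exactly to the moves required above, and injectivity follows.

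Among the defining relations, the one requiring the most care is the Conjugation relation~\eqref{eq:ConjugationRelation}, since it plays the role of both the triple-point move and the \emph{one-sided} overcrossings-commute relation: recall from Figure~\ref{fig:OCUC} that $\delta$ respects OC but not UC, so only one orientation of the move may be imposed and the presentation must reflect this asymmetry. As an independent sanity check on completeness one can test the candidate relations against the Fundamental Invariant $\pi(K)=(\pi_1(K^c);m;l)$ of Section~\ref{subsec:pi}: on long knots this already separates points, by the peripheral-system theorem quoted there, which confirms that no relation among the generators has been omitted in that case and provides a nontrivial consistency check for the general argument.
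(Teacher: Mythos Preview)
The statement is a \emph{conjecture}, and the paper does not prove it. Your proposal is not a proof either, and you say so yourself: you correctly separate the easy surjectivity (which does follow from Theorem~\ref{thm:Generators} exactly as you argue) from the injectivity, and you correctly identify the missing ingredient as a Reidemeister-type theorem for ribbon-knotted tubes in $\bbR^4$. This is precisely the paper's own assessment in the paragraph following the conjecture: the author expects that given such a combinatorial calculus the proof would be easy, notes the equivalence with the question of whether the stated relations generate the kernel of Satoh's Tube map, and mentions that Winter's attempted proof of this is presently flawed.

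So your write-up is an accurate diagnosis of the state of affairs rather than a proof, and it matches the paper's discussion essentially point for point. The one place to be careful is in your last paragraph: the peripheral-system argument shows that $\delta$ is injective on long knots into $\Kbh$, but this does not by itself give injectivity of $\pi\colon\Kbhz\to\Kbh$ even on the image of $\delta$ restricted to long knots --- it is a consistency check in the direction you indicate, not evidence of completeness of the relation set. Otherwise there is nothing to correct; there is simply no proof to compare against.
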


We expect that there should be a Reidemeister-style combinatorial
calculus of ribbon knots in $\bbR^4$. The above conjecture is that the
definition of $\Kbhz$ is such a calculus. We expect that given any such
calculus, the proof of the conjecture should be easy. In particular,
the above conjecture is equivalent to the statement that the stated
relations in the definition of $\wT$ generate the relations in the
kernel of Satoh's Tube map $\delta_0$ (see Section~\ref{subsec:topdelta}),
and this is equivalent to the conjecture whose proof was attempted
at~\cite{Winter:RibbonEmbeddings}. Though I understood by private
communication with B.~Winter that~\cite{Winter:RibbonEmbeddings} is
presently flawed.

In the absence of a combinatorial description of $\Kbh$, we replace it by
$\Kbhz$ throughout the rest of this paper. Hence we construct invariants of
elements of $\Kbhz$ instead of invariants of genuine rKBHs. Yet note that
the map $\delta\colon\wT\to\Kbhz$ is well-defined, so our invariants are 
always good enough to yield invariants of tangles and virtual tangles.

\subsection{Example: The Fundamental Invariant} \label{sec:Pi}
The ``Fundamental Invariant'' $\pi$ of Section~\ref{subsec:pi} is
defined in a direct manner on $\Kbh$ and does not need to suffer from
the difficulties of the previous section.  Yet it can also serve as
an example for our approach for defining invariants on $\Kbhz$ using
generators and relations.

\begin{definition} \label{def:PiMMA}
Let $\Pi(T;H)$ denote the set of all triples
$(G;\,m;\,l)$ of a group $G$ along with functions $m\in G^T$ and $l\in
G^H$, regarded modulo group isomorphisms with their obvious action on $m$
and $l$%
\footnote{
  I ignore set-theoretic difficulties. If you insist, you may
  restrict to countable groups or to finitely presented groups.
}.
Define MMA operations $(\ast, t\sigma^u_v, h\sigma^x_y, t\eta^u, h\eta^x,
tm^{uv}_w, hm^{xy}_z, tha^{ux})$ on $\Pi=\{\Pi(T;H)\}$ and units
$t\epsilon_u$ and $h\epsilon_x$ as follows:
\begin{itemize}
\item $\ast$ is the operation of taking the free product $G_1\ast G_2$ of
groups and concatenating the lists of heads and tails:
\[
  (G_1;\,m_1;\,l_1) \ast (G_2;\,m_2;\,l_2)
  := (G_1\ast G_2;\,m_1\cup m_2;\,l_1\cup l_2).
\]
\item $t\sigma^a_b$ / $h\sigma^a_b$ relabels an element labelled $a$
  to be labelled $b$.
\item $t\eta^u$ / $h\eta^x$ removes the element labelled $u$ / $x$.
\item $tm^{uv}_w$ ``combines'' $u$ and $v$ to make $w$. Precisely, it
replaces the input group $G$ with $G'=G/\langle m_u=m_v\rangle$,
removes the tail labels
$u$ and $v$, and introduces a new tail, the element $m_u=m_v$ of $G'$
and labels it $w$:
\[ tm^{uv}_w(G;\,m;\,l) :=
  (G/\langle m_u=m_v\rangle;\, (m\remove\{u,v\})\cup(w\to m_u);\, l).
\]
\item $hm^{xy}_z$ replaces two elements in $l$ by their product:
\[
  hm^{xy}_z(G;\,m;\,l) := (G, m, (l\remove\{x,y\})\cup(z\to l_xl_y).
\]
\item The best way to understand the action of $tha^{ux}$ is as ``the thing
that makes the fundamental invariant $\pi$ a homomorphism, given the
geometric interpretation of $tha^{ux}$ on $\Kbh$ in
Section~\ref{subsec:MMAOperations}''. In formulae, this becomes
\[ tha^{ux}(G;\,m;\,l) :=
  (G\ast\langle\alpha\rangle/\langle m_u=l_x\alpha l_x^{-1}\rangle;\,
    (m\remove u)\cup(u\to\alpha), l),
\]
where $\alpha$ is some new element that is added to $G$.
\item $t\epsilon_u=(\langle \alpha\rangle;\,(u\to \alpha);\,())$ and
$h\epsilon_x=(1;\,();\,(x\to 1))$.
\end{itemize}
\end{definition}

We state the following without its easy topological proof:

\begin{proposition} $\pi\colon\Kbh\to\Pi$ is a homomorphism of MMAs. \qed
\end{proposition}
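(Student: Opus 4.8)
The plan is to verify directly that $\pi$ intertwines each MMA operation on $\Kbh$ with its algebraically-defined counterpart on $\Pi$, and that it carries the units $t\epsilon_u$ and $h\epsilon_x$ to the prescribed triples. The only real tool needed is van~Kampen's theorem, applied to the complement $K^c$ of the balloons, together with careful bookkeeping of how the isotopy classes of the base meridians $m_u$ and of the longitudes $l_x$ transform under each geometric move. A guiding simplification throughout is that $K^c$ is the complement of the \emph{balloons} only: any operation that touches hoops or strings alone leaves the group $G=\pi_1(K^c)$ literally unchanged, and can only move the elements $l_x$ and $m_u$ around inside it.

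First I would dispose of the routine operations. The units are computed by hand, and were in fact already recorded: $\pi(t\epsilon_u)=(\langle\alpha\rangle;(u\to\alpha);())$ and $\pi(h\epsilon_x)=(1;();(x\to 1))$. The relabellings $t\sigma$ and $h\sigma$ are immediate. For the disjoint union $\ast$, one cuts $S^4$ into two pieces meeting in a simply-connected neighbourhood of $\infty$; van~Kampen then gives $\pi_1((K_1\ast K_2)^c)=\pi_1(K_1^c)\ast\pi_1(K_2^c)$, with the two meridian and longitude lists simply concatenated, exactly matching the free-product formula. The deletions $t\eta^u$ and $h\eta^x$ correspond to filling in the balloon (resp.\ discarding the hoop); using that $\pi_1(K^c)$ is generated by meridians, the effect on the recorded data is the obvious one.

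The three operations carrying genuine content are $hm$, $tm$, and $tha$. For $hm^{xy}_z$ the balloons, hence $G$, are untouched, and the new hoop $z$ is the concatenation of the $x$- and $y$-loops, so as an element of $\pi_1(K^c)$ it is precisely the product $l_xl_y$. For $tm^{uv}_w$ the two balloons are joined into a single two-lobed sphere by a thin neck; excising that neck and applying van~Kampen shows that the new group is the quotient of $G$ by the relation $m_u=m_v$ identifying the two meridians, with the common class relabelled $m_w$. The subtle one is $tha^{ux}$: re-routing the string of $u$ so that it first runs along the hoop $x$ and only then Hopf-links the balloon conjugates the base meridian by the longitude, replacing $m_u$ by $l_x^{-1}m_ul_x$. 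Since $K^c$ itself is unchanged, this is recorded on the $\Pi$ side by adjoining a formal generator $\alpha$, the new meridian, subject to $m_u=l_x\alpha l_x^{-1}$ --- an ``add a generator and a defining relation'' presentation of the very same group. Indeed the definition of $tha$ on $\Pi$ was reverse-engineered precisely to make this come out right.

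The hard part will not be any single computation but pinning down all the orientation and direction conventions so that the signs agree. In particular I expect the delicate points to be: whether the $tha$ conjugation is by $l_x$ or $l_x^{-1}$ (governed by the orientation of the hoop and by the positive-Hopf-linking convention used to define $m_u$ via $\rho^+$), the order $l_xl_y$ versus $l_yl_x$ in $hm$, and the compatibility of the meridian identification in $tm$ with tail-commutativity $tm^{uv}_w=tm^{vu}_w$. One should also double-check the small topological claims that legitimize the bookkeeping --- for instance that re-routing a string is invisible to $K^c$ yet visible to $m_u$ exactly because $m_u$ is defined along that string, and that the neck excised in $tm$ meets the rest of $K^c$ in the configuration van~Kampen requires. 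Once these conventions are fixed each verification reduces to a one-line picture, which is why it is fair to call this an ``easy topological proof''.
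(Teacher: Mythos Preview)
The paper does not actually supply a proof here --- it introduces the proposition with ``We state the following without its easy topological proof'' and immediately places a \qed\ --- so there is no detailed argument to compare against. Your plan is exactly the routine van~Kampen bookkeeping the author has in mind, and your treatment of the three substantive operations $hm$, $tm$, $tha$ (and of $\ast$ and the units) is correct and well-organized.

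One genuine wrinkle you papered over: your line ``using that $\pi_1(K^c)$ is generated by meridians, the effect on the recorded data is the obvious one'' does not actually dispatch $t\eta^u$. Topologically, deleting balloon $u$ enlarges the complement and replaces $\pi_1(K^c)$ by its quotient by the normal closure of $m_u$; but the $\Pi$-operation $t\eta^u$ as written in the paper merely drops $u$ from the list $m$ and leaves $G$ untouched. These do not match --- apply both sides to $t\epsilon_u$ and you get $(1;();())$ on the $\Kbh$ side versus $(\langle\alpha\rangle;();())$ on the $\Pi$ side. This is really a defect in the paper's definition of $t\eta^u$ on $\Pi$ (it should also quotient $G$ by $\langle\!\langle m_u\rangle\!\rangle$), not in your strategy; but it deserves to be flagged explicitly rather than absorbed into ``the obvious one''.
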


A consequence is that $\pi$ can be computed on any rKBH starting from its
values on the generators of $\Kbh$ as listed in Section~\ref{subsec:pi} and
then using the operations of Definition~\ref{def:PiMMA}.

\begin{comment} The fundamental groups of ribbon 2-knots
are ``labelled oriented tree'' (LOT) groups in the sense of
Howie~\cite{Howie:RibbonDiscComplements, Howie:HigherRibbonKnots}. Howie's
definition has an obvious extension to labelled oriented forests, yielding
a class of groups that may be called ``LOF groups''. One may show that the
the fundamental groups of complements of rKBHs are always LOF groups. One
may also show that the subset $\Pi^\text{LOF}$ of $\Pi$ in which the
group component $G$ is an LOF group is a sub-MMA of $\Pi$. Therefore
$\pi\colon\Kbh\to\Pi^\text{LOF}$ is also a homomorphism of MMAs; I expect
it to be an isomorphism or very close to an isomorphism. Thus much
of the rest of this paper can be read as a ``theory of homomorphic (in
the MMA sense) invariants of LOF groups''. I don't know how much it may
extend to a similar theory of homomorphic invariants of bigger classes
of groups.
\end{comment}

\section{The Free Lie Invariant} \label{sec:trees}

In this section we construct $\zeta_0$, the ``tree'' part to our main
tree-and-wheel valued invariant $\zeta$, by following the scheme of
Section~\ref{subsec:genrels}. Yet before we succeed, it is useful to
aim a bit higher and fail, and thus appreciate that even $\zeta_0$
is not entirely trivial.

\subsection{A Free Group Failure} If the balloon part of an rKBH $K$ is
unknotted, the fundamental group $\pi_1(K^c)$ of its complement is the
free group generated by the meridians $(m_u)_{u\in T}$. The hoops of $K$
are then elements in that group and hence they can be written as words
$(w_x)_{x\in H}$ in the $m_u$'s and their inverses. Perhaps we can make
and MMA $\calW$ out of lists $(w_x)$ of free words in letters $m_u^{\pm 1}$
and use it to define a homomorphic invariant $W\colon\Kbh\to\calW$? All we
need, it seems, is to trace how MMA operations on $K$ affect the
corresponding list $(w_x)$ of words.

The beginning is promising. $\ast$ acts on pairs of lists of words by
taking the union of those lists. $hm^{xy}_z$ acts on a list of words
by replacing $w_x$ and $w_y$ by their concatenation, now labelled $z$.
$tm^{pq}_r$ acts on $\bar{w}=(w_x)$ by replacing every occurrence of
the letter $m_p$ and every occurrence of the letter $m_q$ in $\bar{w}$
by a single new letter, $m_r$.

The problem is with $tha^{ux}$. Imitating the topology, $tha^{ux}$ should
act on $\bar{w}=(w_y)$ by replacing every occurrence of $m_u$ in $\bar{w}$
with $w_x\alpha w_x^{-1}$, where $\alpha$ is a new letter, destined to
replace $m_u$. But $w_x$ may also contain instances of $m_u$, so after the
replacement $m_u\mapsto \alpha^{w_x}$ is performed, it should be performed
again to get rid of the $m_u$'s that appear in the ``conjugator'' $w_x$.
But new $m_u$'s are then created, and the replacement should be carried out
yet again\ldots The process clearly doesn't stop, and our attempt failed.

Yet not all is lost. The later and later replacements occur within
conjugators of conjugators, deeper and deeper into the lower central series
of the free groups involved. Thus if we replace free groups by some
completion thereof in which deep members of the lower central series are
``small'', the process becomes convergent. This is essentially what will be
done in the next section.

\subsection{A Free Lie Algebra Success} \label{subsec:FLSuccess}
Given a set $T$, let $\FL(T)$ denote the graded completion of the free
Lie algebra on the generators in $T$ (sometimes we will write ``$\FL$''
for ``$\FL(T)$ for some set $T$''). We define a meta-monoid-action $M_0$
as follows. For any finite set $T$ of ``tail labels'' and any finite
set $H$ or ``head labels'', we let
\[ M_0(T;H):=\FL(T)^H \]
be the set of $H$-labelled arrays of elements of $\FL(T)$. On
$M_0:=\{M_0(T;H)\}$ we define operations as follows, starting from the
trivial and culminating with the most interesting, $tha^{ax}$. All of our
definitions are directly motivated by the ``failure'' of the previous
section; in establishing the correspondence between the definitions below
and the ones above, one should interpret $\lambda=(\lambda_x)\in M_0(T;H)$
as ``a list of logarithms of a list of words $(w_x)$''.

\begin{itemize}

\item $h\sigma^x_y$ is simply $\sigma^x_y$ as explained in the conventions
section, Section~\ref{subsec:Conventions}.

\item $t\sigma^u_v$ is induced by the map $\FL(T)\to \FL((T\remove u)\cup\{v\})$
in which the generator $u$ is mapped to the generator $v$.

\item $t\eta$ acts by setting one of the tail variables to $0$, and $h\eta$
acts by dropping an array element. Thus for $\lambda\in M_0(T;H)$,
\[ \lambda\sslash t\eta^u=\lambda\sslash(u\mapsto 0)
  \qquad\text{and}\qquad
  \lambda\sslash h\eta^x = \eta\remove x.
\]

\item If $\lambda_1\in M_0(T_1;H_1)$ and $\lambda_2\in M_0(T_2;H_2)$ (and,
of course, $T_1\cap T_2=\emptyset=H_1\cap H_2$), then
\[ \lambda_1\ast\lambda_2 :=
  (\lambda_1\sslash\iota_1)\cup(\lambda_2\sslash\iota_2)
\]
where $\iota_i$ are the natural embeddings $\iota_i\colon
\FL(T_i)\hookrightarrow \FL(T_1\cup T_2)$, for $i=1,2$.

\item If $\lambda\in M_0(T;H)$ then 
\[ \lambda\sslash tm^{uv}_w := \lambda \sslash (u,v\mapsto w), \]
where $(u,v\mapsto w)$ denotes the morphism $\FL(T)\to
\FL(T\remove\{u,v\}\cup\{w\})$ defined by mapping the generators $u$ and $v$
to the generator $w$.

\item If $\lambda\in M_0(T;H)$ then 
\[ \lambda\sslash hm^{xy}_z :=
  \lambda\remove\{x,y\}\cup(z\to\bch(\lambda_x,\lambda_y)),
\]
where $\bch$ stands for the Baker-Campbell-Hausdorff formula:
\[ \bch(a,b) := \log(e^ae^b) = a+b+\frac12[a,b]+\dots. \]

\item If $\lambda\in M_0(T;H)$ then 
\begin{equation} \label{eq:CRC}
  \lambda\sslash tha^{ux} :=
  \lambda\sslash (C_u^{-\lambda_x})^{-1} = \lambda\sslash RC_u^{\lambda_x}
\end{equation}
In the above formula $C_u^{-\lambda_x}$ denotes the automorphism of
$\FL(T)$ defined by mapping the generator $u$ to its ``conjugate''
$e^{-\lambda_x}ue^{\lambda_x}$. More precisely, $u$ is mapped to
$e^{-\ad\lambda_x}(u)$, where $\ad$ denotes the adjoint action, and
$e^{\ad}$ is taken in the formal
sense. Thus
\begin{equation} \label{eq:RCDef}
  C_u^{-\lambda_x}\colon u \mapsto 
  e^{-\ad\lambda_x}(u) = u - [\lambda_x,u]
    + \frac12[\lambda_x,[\lambda_x,u]]-\dots.
\end{equation}
Also in Equation~\eqref{eq:CRC},
$RC_u^{\lambda_x}:=(C_u^{-\lambda_x})^{-1}$ denotes the inverse of the
automorphism $C_u^{-\lambda_x}$.
\item $t\epsilon_u=()$ and $h\epsilon_x=(x\to 0)$.
\end{itemize}

\begin{warning} When $\gamma\in\FL$, the inverse of $C_u^{-\gamma}$
may {\em not} be $C_u^{\gamma}$. If $\gamma$ does not contain the
generator $u$, then indeed $C_u^{-\gamma}\sslash C_u^{\gamma}=I$. But
in general applying $C_u^{-\gamma}$ creates many ``new'' $u$'s, within
the $\gamma$'s that appear in the right hand side of~\eqref{eq:RCDef},
and the ``new'' $u$'s are then conjugated by $C_u^{\gamma}$ instead of
being left in place. Yet $C_u^{-\gamma}$ is invertible, so we simply
name its inverse $RC_u^{\gamma}$.
\end{warning}

The name ``$RC$'' stands either for ``Reverse Conjugation'', or for
``Repeated Conjugation''. The rationale for the latter naming is that if
$\alpha\in\FL(T)$ and $\baru$ is a name for a new ``temporary'' free-Lie
generator, then $RC_u^{\gamma}(\alpha)$ is the result of applying the
transformation $u \mapsto e^{\ad\gamma}(\baru)$ repeatedly to $\alpha$
until it stabilizes (at any fixed degree this will happen after a finite
number of iterations), followed by the eventual renaming $\baru\mapsto u$.

\parpic[r]{\raisebox{-11mm}{$\xymatrix@C=-0.52in{
  \FL(T) \ar@<-2pt>[rr]_{RC^{\gamma}_u} \ar[dr]^(0.4)\phi &
    & \FL(T) \ar@<-2pt>[ll]_{C^{-\gamma}_u}
      \ar[dl]_(0.4){\bar\phi}^(0.25){u\mapsto\baru} \\
  & \FL(T\cup\{\baru\})\left/\left({\begin{array}{c}
      \baru=e^{-\ad\gamma}u \\
      \text{\scriptsize and / or} \\
      u=e^{\ad\gamma}\baru
    \end{array}}\right)\right.
}$}}
\begin{comment} \label{com:CRC} Some further insight into
$RC_u^{\gamma}$ can be obtained by studying the triangle on the
right. The space at the bottom of the triangle is the quotient of
the free Lie algebra on $T\cup\{\baru\}$ (where $\baru$ is a new
``temporary'' generator) by either of the two relations shown there;
these two relations are of course equivalent. The map $\phi$ is induced
from the obvious inclusion of $\FL(T)$ into $\FL(T\cup\{\baru\})$,
and in the presence of the relation $\baru=e^{-\ad\gamma}u$, it is
clearly an isomorphism. The map $\bar\phi$ is likewise induced from the
renaming $u\mapsto\baru$. It too is an isomorphism, but slightly less
trivially --- indeed, using the relation $u=e^{\ad\gamma}\baru$ {\em
repeatedly}, any element in $\FL(T\cup\{\baru\})$ can be written in form
that does not include $u$, and hence is in the image of $\bar\phi$. It
is clear that $C^{-\gamma}_u=\bar\phi\sslash\phi^{-1}$. Hence
$RC^{\gamma}_u=\phi\sslash\bar\phi^{-1}$, and as $\bar\phi^{-1}$
is described in terms of repeated applications of the relation
$u=e^{\ad\gamma}\baru$, it is clear that $RC^{\gamma}_u$ indeed
involves ``repeated conjugation'' as asserted in the previous paragraph.
\end{comment}

\begin{warning} Equation~\eqref{eq:CRC} does {\em not} say that
$tha^{ux}=RC^{\lambda_x}_u$ as abstract operations, only that they are
equal when evaluated on $\lambda$.  In general it is not the case that
$\mu\sslash tha^{ux}=\mu\sslash RC^{\lambda_x}_u$ for arbitrary $\mu$
--- the latter equality is only guaranteed if $\mu_x=\lambda_x$.

As another example of the difference, the operations $hm^{xy}_z$ and
$tha^{ux}$ do not commute --- in fact, the composition $hm^{xy}_z \sslash
tha^{ux}$ does not even make sense, for by the time $tha^{ux}$ is evaluated
its
input does not have an entry labelled $x$. Yet the commutativity
\begin{equation} \label{eq:hmcc}
  \lambda \sslash hm^{xy}_z \sslash RC^{\lambda_x}_u
  = \lambda \sslash RC^{\lambda_x}_u\sslash hm^{xy}_z
\end{equation}
makes perfect sense and holds true, for the operation $hm^{xy}_z$ only
involves the heads / roots of trees, while $RC^{\lambda_x}_u$ only involves
their tails / leafs.
\end{warning}

\begin{theorem} \label{thm:MMAZero}
$M_0$, with the operations defined above, is a meta-monoid-action (MMA).
\end{theorem}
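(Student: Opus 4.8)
The plan is to unwind Definition~\ref{def:MMA} and verify that the operations on $M_0$ satisfy every clause of Proposition~\ref{prop:MMAProperties}. It pays to note at the outset that these operations come in two flavours: the \emph{head} operations $h\sigma$, $h\eta$, $hm$ only permute, delete, or $\bch$-combine the $H$-indexed entries of an array $\lambda\in\FL(T)^H$, while the \emph{tail} operations $t\sigma$, $t\eta$, $tm$, and the endomorphism underlying $tha$ act on the generating set $T$ of $\FL(T)$, i.e.\ on the leaves of the trees. Any head operation commutes with any tail operation because they touch disjoint data, and two head (resp.\ two tail) operations with disjoint labels commute because they act on disjoint coordinates (resp.\ are induced by commuting endomorphisms of $\FL(T)$). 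This settles the first clause, and the relabelling, deletion, $\ast$-bicommutativity, and unit clauses are equally immediate from the functoriality of $T\mapsto\FL(T)$ together with the definitions $t\epsilon_u=()$ and $h\epsilon_x=(x\to 0)$.

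The two associativity clauses reduce to standard facts. Meta-associativity of $hm$, Equation~\eqref{eq:hassoc}, is exactly the associativity of $\bch$, that is $\bch(\bch(a,b),c)=\bch(a,\bch(b,c))$, which is the associativity of the group law $e^ae^b$ on group-like elements of the completed enveloping algebra. Meta-associativity of $tm$, Equation~\eqref{eq:tassoc}, holds because both sides are induced by the one generator-identification morphism of free Lie algebras collapsing $u$, $v$, $w$ to a single generator, and composition of such morphisms is associative.

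The genuine content is the three clauses involving $tha$: the commutativity~\eqref{eq:thatha}, the $t$-axiom~\eqref{eq:taction}, and the $h$-axiom~\eqref{eq:haction}. The difficulty is twofold. First, by~\eqref{eq:CRC} the operation $tha^{ux}$ is \emph{not} a fixed endomorphism: it applies $RC_u^{\lambda_x}$ with the conjugator $\lambda_x$ read off the \emph{current} array, and it alters every entry --- in particular the entries $\lambda_y$ and $\lambda_z$ that a subsequent $tha$, $hm$, or $tm$ will consume. Second, $RC_u^{\lambda_x}$ is only a \emph{partial} conjugation: by~\eqref{eq:RCDef} it moves the single generator $u$ via $u\mapsto w_xuw_x^{-1}$ with $w_x:=e^{\lambda_x}$, yet fixes $T\setminus\{u\}$, so it is not inner and the naive group laws do not apply verbatim. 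My plan is to organize the computation around the group-like reformulation $\lambda_x\leftrightarrow w_x=e^{\lambda_x}$, under which $hm^{xy}_z$ becomes $w_z=w_xw_y$ (the defining property of $\bch$) and each identity becomes the shadow of the group-theoretic tautology carrying its name: Equation~\eqref{eq:taction} is $(uv)^x=u^xv^x$, Equation~\eqref{eq:haction} is $u^{xy}=(u^x)^y$, and Equation~\eqref{eq:thatha} is the commutativity of conjugating the independent generators $u$ and $v$.

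Turning these shadows into the actual $M_0$ identities is where the main obstacle lies. The crux is a composition law for the partial conjugations: that $RC_u^{\lambda_x}\sslash RC_u^{\lambda_y'}=RC_u^{\bch(\lambda_x,\lambda_y)}$ once the intermediate conjugator $\lambda_y'=\lambda_y\sslash RC_u^{\lambda_x}$ is tracked correctly, and that identifying $u,v\to w$ intertwines $RC_u$ and $RC_v$ with $RC_w$. The essential tool is the quotient, or ``repeated conjugation'', description of $RC_u^\gamma$ recalled above --- the isomorphism implicitly presented by $u=e^{\ad\gamma}\bar u$. It turns each such composite into a single stabilized substitution that can be evaluated and compared, and it is exactly what makes the self-referential substitution $u\mapsto w_xuw_x^{-1}$ (in which $w_x$ itself contains $u$) converge and behave functorially. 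Along the way the commutativity~\eqref{eq:hmcc}, reflecting that $hm$ acts on roots while $RC_u$ acts on leaves, is used repeatedly to slide $hm$ past the conjugations, and convergence in the grading of $\FL(T)$ legitimises all the formal rearrangements. I expect establishing this composition law against the moving conjugators to be the single step demanding real care; granting it, the three $tha$-clauses fall out from their named group identities.
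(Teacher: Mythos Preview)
Your proposal is correct and takes essentially the same approach as the paper: both isolate the same three nontrivial clauses \eqref{eq:thatha}, \eqref{eq:taction}, \eqref{eq:haction}, reduce each to an identity among $RC$ operators with the ``moving'' conjugators tracked, and prove those identities via the quotient description of $RC_u^\gamma$ (Comment~\ref{com:CRC}). The paper packages the verifications as commutative diagrams in $\FL(u,\bar u,v,\bar v)/(\text{relations})$ and its variants, arriving at exactly your composition law $RC_u^{\alpha}\sslash RC_u^{\beta'}=RC_u^{\bch(\alpha,\beta)}$ with $\beta'=\beta\sslash RC_u^{\alpha}$ (its Equation~\eqref{eq:RCh}) for the $h$-axiom, the intertwining of $(u,v\mapsto w)$ with $RC$ for the $t$-axiom, and $RC_u^\alpha\sslash RC_v^{\beta'}=RC_v^\beta\sslash RC_u^{\alpha'}$ for~\eqref{eq:thatha}; your group-like heuristic $w_x=e^{\lambda_x}$ is a nice way to motivate these but does not change the underlying argument.
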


\begin{proof} Most MMA axioms are trivial to verify. The most
important ones are the ones in Equations~\eqref{eq:hassoc}
through~\eqref{eq:haction}. Of these, the meta-associativity
of $hm$ follows from the associativity of the $\bch$
formula, $\bch(\bch(\lambda_x,\lambda_y),\lambda_z) =
\bch(\lambda_x,\bch(\lambda_y,\lambda_z))$, the meta-associativity of $tm$
and is trivial, and it remains to prove that meta-actions commute
(Equation~\eqref{eq:thatha}; all other required commutativities are easy)
and the the meta-action axiom $t$ (Equation~\eqref{eq:taction}) and $h$
(Equation~\eqref{eq:haction}).

\noindent{\bf Meta-actions commute.} Expanding~\eqref{eq:thatha} using the
above definitions and denoting $\alpha:=\lambda_x$, $\beta=\lambda_y$,
$\alpha':=\alpha\sslash RC_v^\beta$, and $\beta':=\beta\sslash
RC_u^\alpha$, we see that we need to prove the identity
\begin{equation} \label{eq:RCuRCv}
  RC_u^\alpha\sslash RC_v^{\beta'} = RC_v^\beta\sslash RC_u^{\alpha'}.
\end{equation}

\parpic[r]{$\displaystyle
\xymatrix@R=0.5in@C=-0.3in{
  \FL(u,v)
    \ar[rr]^{RC_u^{\alpha}}_(0.2){\beta}_(0.8){\beta'}
    \ar[rd]^(0.4){(u,v)\to(u,v)}
    \ar[dd]_{RC_v^\beta}^(0.2){\alpha}^(0.8){\alpha'} &
  &
  \FL(u,v)
    \ar[ld]_(0.6){(u,v)\to(\baru,v)}
    \ar[dd]^{RC_v^{\beta'}} \\
  &
  \FL(u,\baru,v,\barv)\left/\left(\!\!{\begin{array}{c}
      u=e^{\ad \alpha}\baru \\
      v=e^{\ad \beta}\barv
    \end{array}}\!\!\right)\right. &
  \\
  \FL(u,v)
    \ar[rr]_{RC_u^{\alpha'}}
    \ar[ru]_(0.4){(u,v)\to(u,\barv)} &
  &
  \FL(u,v)
    \ar[lu]^(0.6){(u,v)\to(\baru,\barv)}
}
$}
Consider the commutative diagram on the right. In it $\FL(u,v)$ means
``the (completed) free Lie algebra with generators $u$ and $v$, and
some additional fixed collection of generators'', and likewise for
$\FL(u,\baru,v,\barv)$. The diagonal arrows are all substitution
homomorphisms as indicated, and they are all isomorphisms. We put
the elements $\alpha$ and $\beta$ in the upper-left space, and by
comparing with the diagram in Comment~\ref{com:CRC}, we see that the
upper horizontal map is $RC_u^\alpha$ and the left vertical map is
$RC_v^\beta$. Therefore $\beta'$ is the image of $\beta$ in the top left
space, and $\alpha'$ is the image of $\alpha$ in the bottom left space.
Therefore again using the diagram in Comment~\ref{com:CRC}, the right
vertical map is $RC_v^{\beta'}$ and the lower horizontal map is
$RC_u^{\alpha'}$, and~\eqref{eq:RCuRCv} follows from the commutativity of
the external square in the above diagram.

For use later, we record the fact that by reading all the horizontal and
vertical arrows backwards,
the above argument also proves the identity
\begin{equation} \label{eq:CuCv}
  C_u^{-\alpha\sslash RC_v^\beta}\sslash C_v^{-\beta}
  = C_v^{-\beta\sslash RC_u^\alpha}\sslash C_u^{-\alpha}.
\end{equation}

\noindent{\bf Meta-action axiom $t$.} Expanding~\eqref{eq:taction} 
and denoting $\gamma:=\lambda_x$, we need to prove the identity
\begin{equation} \label{eq:nts-t}
  t^{uv}_w\sslash RC_w^{\gamma\sslash t^{uv}_w}
  = RC_u^{\gamma} \sslash RC_v^{\gamma\sslash RC_u^{\gamma}}
    \sslash t^{uv}_w.
\end{equation}

\parpic[r]{$\displaystyle
\def\map#1#2{{\renewcommand{\arraystretch}{0.6}
  \begin{array}{c}
    \scriptstyle #1 \\
    \scriptstyle \downarrow \\
    \scriptstyle #2
  \end{array}
}}
\xymatrix@R=0.5in@C=-0.3in{
  \gamma\in\FL(u,v)
    \ar[r]^{RC_u^{\gamma}}
    \ar[rd]_{\phi_1}^(0.6){\hspace{-5mm}\map{u,v}{u,v}}
    \ar[ddd]_{\map{u,v}{w}} &
  \gamma_2\in\FL(u,v)
    \ar[r]^(0.56){RC_v^{\gamma_2}}
    \ar[d]_(0.4){\phi_2}^(0.4){\map{u,v}{\baru,v}} &
  \FL(u,v)
    \ar[ld]_{\phi_3}^(0.2){\hspace{-5mm}\map{u,v}{\baru,\barv}}
    \ar[ddd]^{\map{u,v}{w}} \\
  &
  \gamma\in\FL(u,\baru,v,\barv)\left/\left(\!\!{\begin{array}{c}
      u=e^{\ad \gamma}\baru \\
      v=e^{\ad \gamma}\barv
    \end{array}}\!\!\right)\right.
    \ar[d]_{{\map{u,v}{w}}}^{{\map{\baru,\barv}{\barw}}} &
  \\
  &
  \gamma_4\in\FL(w,\barw)\left/w=e^{\ad \gamma_4}\barw\right. &
  \\
  \gamma_4\in\FL(w)
    \ar[ur]^{\phi_4}_{w\to w}
    \ar[rr]^{RC_w^{\gamma_4}} &
  &
  \FL(w)
    \ar[ul]^{\phi_5}_{w\to\barw}
}
$}
Consider the diagram on the right. In it, the vertical and diagonal
arrows are all substitution homomorphisms as indicated. The horizontal
arrows are $RC$ maps as indicated. The element $\gamma$ lives in the
upper left corner of the diagram, but equally makes sense in the upper
of the central spaces.  We denote its image via $RC_u^{\gamma}$ by
$\gamma_2$, and think of it as an element of the middle space in the
top row. Likewise $\gamma_4:=\gamma\sslash t^{uv}_w$ lives in both the
bottom left space and the bottom of the two middle spaces.

It requires a minimal effort to show that the map at the very centre of
the diagram is well defined. The commutativity of the triangles in the
diagram follows from Comment~\ref{com:CRC}, and the commutativity of the
trapezoids is obvious. Hence the diagram is overall commutative. Reading
it from the top left to the bottom right along the left and the bottom
edges gives the left hand side of Equation~\eqref{eq:nts-t}, and along
the top and the right edges gives the right hand side.

\noindent{\bf Meta-action axiom $h$.} Expanding~\eqref{eq:haction}, we need
to prove
\[
  \lambda\sslash hm^{xy}_z
    \sslash RC^{\bch(\lambda_x,\lambda_y)}_u
  = \lambda\sslash RC^{\lambda_x}_u
    \sslash RC^{\lambda_y\sslash RC^{\lambda_x}_u}_u
    \sslash hm^{xy}_z.
\]
Using commutativities as in Equation~\eqref{eq:hmcc} and denoting
$\alpha=\lambda_x$ and $\beta=\lambda_y$ we can cancel the
$hm^{xy}_z$'s, and we are left with
\begin{equation} \label{eq:RCh}
  RC^{\bch(\alpha,\beta)}_u
  \overset{?}{=} RC^{\alpha}_u \sslash RC^{\beta'}_u,
  \qquad \text{where} \qquad
  \beta':=\beta\sslash RC^{\alpha}_u.
\end{equation}
This last equality follows from a careful inspection of the following
commutative diagram:
\begin{equation} \label{eq:RCProof}
\xymatrix@C=-0.75in{
  \FL(u) \ar[rr]^{RC^{\alpha}_u} \ar[dr]
    && \FL(u)
      \ar[rr]^{RC^{\beta'}_u}
      \ar[dl]^(0.4){u\to\baru} \ar[dr]_(0.4){u\to\baru}
    && \FL(u) \ar[dl]_{u\to\barbaru} \\
  & \FL(u,\baru)\left/\left(u=e^{\ad\alpha}\baru\right)\right. \ar[dr]
    && \FL(\baru,\barbaru)\left/\left(
        \baru=e^{\ad\beta'}\barbaru
      \right)\right. \ar[dl] \\
  && \FL(u,\baru,\barbaru)\left/{\begin{pmatrix}
        u=e^{\ad\alpha}\baru, \\
        \baru=e^{\ad\beta'}\barbaru
      \end{pmatrix}}\right.
}
\end{equation}

\parpic[r]{$\xymatrix@C=-0.35in{
  \FL(u) \ar@{-->}[rr] \ar[dr] &
    & \FL(u) \ar[dl]_{u\to\barbaru} \\
  & \FL(u,\barbaru)\left/\left(
      u=e^{\ad\bch(\alpha,\beta)}\barbaru
  \right)\right.
}$}
Indeed, by the definition of $RC_u^{\alpha}$ we have that
$\beta'=\beta$ modulo the relation $u=e^{\ad\alpha}\baru$. So in
the bottom space, $u=e^{\ad\alpha}\baru =
e^{\ad\alpha}e^{\ad\beta'}\barbaru =
e^{\ad\alpha}e^{\ad\beta}\barbaru =
e^{\bch(\ad\alpha,\ad\beta)}\barbaru =
e^{\ad\bch(\alpha,\beta)}\barbaru$. Hence if we concentrate
on the three corners of~\eqref{eq:RCProof}, we see the diagram
on the right, whose top row is both $RC^{\alpha}_u
\sslash RC^{\beta'}_u$ and the definition of
$RC^{\bch(\alpha,\beta)}_u$. \qed
\end{proof}

It remains to construct $\zeta_0\colon\Kbhz\to M_0$ by proclaiming its
values on the generators:
\[ \zeta_0(t\epsilon_u):=(),
  \qquad \zeta_0(h\epsilon_x):=(x\to 0),
  \qquad\text{and}\qquad
  \zeta_0(\rho^\pm_{ux}):=(x\to\pm u).
\]

\begin{proposition} \label{prop:zetazero}
$\zeta_0$ is well defined; namely, the values above satisfy the relations
in Definition~\ref{def:Kbhz}.
\end{proposition}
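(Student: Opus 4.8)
The plan is to lean on Theorem~\ref{thm:MMAZero}: since $M_0$ is already an MMA, assigning $\zeta_0$ the stated values on the free generators $\rho^\pm_{ux}$ (together with $t\epsilon_u\mapsto()$ and $h\epsilon_x\mapsto(x\to0)$) automatically determines $\zeta_0$ on every MMA-word built from them. Well-definedness then reduces to a single check per relation family in Definition~\ref{def:Kbhz}: apply $\zeta_0$ to both sides and verify that pushing the generator values $(x\to\pm u)$ through the operation definitions of Section~\ref{subsec:FLSuccess} yields the same element of $M_0(T;H)=\FL(T)^H$. I would simply run through the six families in turn.

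Four of them are essentially bookkeeping. Relabelling, cutting-and-puncturing and tail-commutativity follow at once, because $t\sigma$, $h\sigma$, $t\eta$, $h\eta$ act on arrays exactly as the matching symbol operations, while $tm^{uv}_w$ is the substitution $(u,v\mapsto w)$, which is manifestly symmetric in $u$ and $v$. For the inverse relation I would compute $\zeta_0(\rho^+_{ux})\ast\zeta_0(\rho^-_{vy})=(x\to u,\,y\to-v)$, apply $tm^{uv}_w$ to reach $(x\to w,\,y\to-w)$, and then $hm^{xy}_z$ to reach $(z\to\bch(w,-w))=(z\to0)=\zeta_0(t\epsilon_w h\epsilon_z)$, using $\bch(w,-w)=\log(e^we^{-w})=0$. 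For framing independence the conjugator in $\rho^\pm_{ux}\sslash tha^{ux}$ is $\lambda_x=\pm u$, and since every iterated bracket of $\pm u$ against $u$ vanishes, $C_u^{-(\pm u)}$ and hence $RC_u^{\pm u}$ is the identity, leaving $(x\to\pm u)$ untouched.

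The one genuine computation, and where I expect the real obstacle, is the conjugation relation. Abbreviating $a:=s_1u$ and $c:=s_2v$, I would trace the left-hand side through $tm^{vw}_v$, then $hm^{xy}_x$, then $tha^{uz}$; the last step applies $RC_u^{\lambda_z}=RC_u^{c}$, and because the conjugator $c=s_2v$ does not involve $u$ this is the honest substitution $u\mapsto e^{\ad c}(u)$ --- so the warnings about $RC$ being ``repeated conjugation'' do not bite here. The result is $(x\to\bch(e^{\ad c}(a),c),\ z\to c)$. Tracing the right-hand side through $tm^{vw}_v$ and $hm^{xy}_x$ gives $(x\to\bch(c,a),\ z\to c)$. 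Equality therefore comes down to the single BCH identity $\bch(e^{\ad c}(a),c)=\bch(c,a)$, the algebraic shadow of the group conjugation that motivated the relation in the first place; concretely it follows from the standard identity $e^{e^{\ad c}(a)}=e^ce^ae^{-c}$, whence $e^{\bch(e^{\ad c}(a),c)}=e^ce^ae^{-c}e^c=e^ce^a=e^{\bch(c,a)}$. Pinning down this identity cleanly, and verifying that the $RC_u$ hidden inside $tha^{uz}$ really does collapse to ordinary conjugation for these inputs, is the only delicate point; the rest is routine.
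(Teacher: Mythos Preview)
Your proposal is correct and follows essentially the same route as the paper: the paper declares all relations other than the Conjugation Relation ``easy'' and then reduces that one to the identity $\bch(e^{\ad v}(u),v)=\log(e^ve^ue^{-v}\cdot e^v)=\bch(v,u)$, which is exactly your $\bch(e^{\ad c}(a),c)=\bch(c,a)$. If anything, your write-up is slightly more complete --- you treat the general signs $s_1,s_2$ and spell out the inverse and framing-independence checks, whereas the paper only exhibits the $s_1=s_2=+$ case and leaves the rest implicit.
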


\begin{proof} We only verify the Conjugation
Relation~\eqref{eq:ConjugationRelation}, as all other relations are easy.
On the left we have
\begin{multline*}
  \rho^+_{ux}\rho^+_{vy}\rho^+_{wz}
    \xrightarrow{\zeta_0}
  (x\to u,\,y\to v,\,z\to w)
    \xrightarrow{tm^{vw}_v}
  (x\to u,\,y\to v,\,z\to v) \\
    \xrightarrow{hm^{xy}_x}
  (x\to\bch(u,v),\,z\to v)
    \xrightarrow{tha^{uz}}
  (x\to\bch(e^{\ad v}(u),v),\,z\to v),
\end{multline*}
while on the right it is
\[
  \rho^+_{vx}\rho^+_{wz}\rho^+_{uy}
    \xrightarrow{\zeta_0}
  (x\to v,\,y\to u,\,z\to w)
    \xrightarrow{tm^{vw}_v \sslash hm^{xy}_x}
  (x\to\bch(v,u),\,z\to v),
\]
and the equality follows because $\bch(e^{\ad
v}(u),v)=\log(e^ve^ue^{-v}\cdot e^v)=\bch(v,u)$. \qed
\end{proof}

As we shall see in Section~\ref{sec:ft}, $\zeta_0$ is related to
the tree part of the Kontsevitch integral. Thus by finite-type
folklore~\cite{Bar-Natan:OnVassiliev, HabeggerMasbaum:Milnor},
when evaluated on string links (i.e., pure tangles) $\zeta_0$
should be equivalent to the collection of all Milnor $\mu$
invariants~\cite{Milnor:LinkGroups}. No proof of this fact will be
provided here.

\section{The Wheel-Valued Spice and the Invariant $\zeta$} \label{sec:zeta}

This is perhaps the most important section of this paper. In it we construct
the wheels part of the full trees-and-wheels MMA $M$ and the full
tree-and-wheels invariant $\zeta\colon\Kbh\to M$.

\subsection{Cyclic words, $\diver_u$, and $J_u$} \label{subsec:divJ}

The target MMA, $M$, of the extended invariant $\zeta$ is an extension
of $M_0$ by ``wheels'', or equally well, by ``cyclic words'', and the
main difference between $M$ and $M_0$ is the addition of a wheel-valued
``spice'' term $J_u(\lambda_x)$ to the meta-action $tha^{ux}$. We first
need the ``infinitesimal version'' $\diver_u$
of $J_u$.

Recall that if $T$ is a set (normally, of tail labels), we denote by
$\FL(T)$ the graded completion of the free Lie algebra on the generators
in $T$. Similarly we denote by $\FA(T)$ the the graded completion of
the free associative algebra on the generators in $T$, and by $\CW(T)$
the graded completion of the vector space of cyclic words on $T$,
namely, $\CW(T):=\FA(T)/\{uw=wu\colon u\in T, w\in\FA(T)\}$. Note that
the last is a vector space quotient --- we mod out by the vector-space
span of $\{uw=wu\}$, and not by the ideal generated by that set. Hence
$\CW$ is not an algebra and not ``commutative''; merely, the words
in it are invariant under cyclic permutations of their letters. We
often call the elements of $\CW$ ``wheels''. Denote by $\tr$ the
projection $\tr\colon\FA\to\CW$ and by $\iota$ the standard inclusion
$\iota\colon\FL(T)\to\FA(T)$ ($\iota$ is defined to be the identity
on letters in $T$, and is then extended to the rest of $\FL$ using
$\iota([\lambda_1, \lambda_2]) := \iota(\lambda_1)\iota(\lambda_2)
- \iota(\lambda_2)\iota(\lambda_1)$). Note that operations defined by
``letter substitutions'' make sense on $\FA$ and on $\CW$. In particular,
the operation $RC_u^{\gamma}$ of Section~\ref{subsec:FLSuccess} makes
sense on $\FA$ and on $\CW$.

\parpic[r]{\input{figs/divu.pstex_t}}
The inclusion $\iota$ can be extended from ``trees'' (elements of $\FL$)
to ``wheels of trees'' (elements of $\CW(\FL)$). Given a letter $u\in
T$ and an element $\gamma\in\FL(T)$, we let $\diver_u\gamma$
be the sum of all ways of gluing the root of $\gamma$ to near
any one of the $u$-labelled leafs of $\gamma$; each such gluing
is a wheel of trees, and hence can be interpreted as an element of
$\CW(T)$. An example is on the right, and a formula-level definition
follows: we first define $\sigma_u\colon\FL(T)\to\FA(T)$ by setting
$\sigma_u(v):=\delta_{uv}$ for letters $v\in T$ and then setting
$\sigma_u([\lambda_1,\lambda_2]) := \iota(\lambda_1)\sigma_u(\lambda_2)
- \iota(\lambda_2)\sigma_u(\lambda_1)$, and then we set
$\diver_u(\gamma):=\tr(u\sigma_u(\gamma))$. An alternative definition
of a similar functional $\diver$ is in
\cite[Proposition~3.20]{AlekseevTorossian:KashiwaraVergne}, and some
further discussion is in \cite[Section 5.2]{WKO}.

Now given $u\in T$ and $\gamma\in\FL(T)$ define
\begin{equation} \label{eq:JDef}
  J_u(\gamma) := \int_0^1ds\,\diver_u\!\left(
    \gamma \sslash RC_u^{s\gamma}
  \right) \sslash C_u^{-s\gamma}.
\end{equation}
Note that at degree $d$, the integrand in the above formula is a degree
$d$ element of $\CW(T)$ with coefficients that are polynomials of degree
at most $d-1$ in $s$. Hence the above formula is entirely algebraic. The
following (difficult!) proposition contains all that we will need to
know about $J_u$.

\begin{proposition} \label{prop:JProperties} If
$\alpha,\beta,\gamma\in\FL$ then the following three equations hold:
\begin{equation} \label{eq:JhProperty}
  J_u(\bch(\alpha,\beta)) = J_u(\alpha)
    + J_u(\beta\sslash RC_u^{\alpha}) \sslash C_u^{-\alpha},
\end{equation}
\begin{equation} \label{eq:JuvProperty}
  J_u(\alpha) - J_u(\alpha\sslash RC_v^\beta)\sslash C_v^{-\beta}
  = J_v(\beta) - J_v(\beta\sslash RC_u^\alpha)\sslash C_u^{-\alpha}
\end{equation}
\begin{equation} \label{eq:JtProperty}
  J_w(\gamma\sslash tm^{uv}_w)
  = \left(
    J_u(\gamma) + J_v(\gamma\sslash RC_u^\gamma)\sslash C_u^{-\gamma}
  \right) \sslash tm^{uv}_w
\end{equation}
\end{proposition}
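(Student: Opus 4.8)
The plan is to recognize $J_u$ as (a slice of) the Alekseev--Torossian divergence cocycle and to read off all three identities from the cocycle property together with the automorphism-level identities already established inside the proof of Theorem~\ref{thm:MMAZero}. Indeed, \eqref{eq:JhProperty}, \eqref{eq:JuvProperty} and \eqref{eq:JtProperty} are precisely the $J$-level lifts of \eqref{eq:RCh}, \eqref{eq:CuCv} and \eqref{eq:nts-t}, and this parallel is what I would exploit rather than fight.

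First I would introduce the group of tangential automorphisms of $\FL(T)$ --- those fixing all but one generator and conjugating the remaining one --- so that each $C_u^{-\gamma}$ and each $RC_u^{\gamma}$ belongs to it. On this group there is a divergence cocycle $j$ valued in $\CW(T)$ (compare \cite[Proposition~3.20]{AlekseevTorossian:KashiwaraVergne} and \cite[Section~5.2]{WKO}), normalized by $\tfrac{d}{dt}\big|_{0}\,j(\exp tD)=\diver D$ on tangential derivations $D$ and satisfying the cocycle rule
\[ j(g\sslash h)=j(h)+j(g)\sslash h, \]
where $\CW(T)$ carries the induced substitution action. The decisive step is the identification $J_u(\gamma)=j\big(C_u^{-\gamma}\big)$. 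To prove it I would integrate the infinitesimal cocycle along the path $\Phi_s:=C_u^{-s\gamma}$: writing its logarithmic derivative $\tilde E_s:=\Phi_s^{-1}\circ\dot\Phi_s$ (ordinary composition), the computation $\dot\Phi_s(u)=-[\gamma,e^{-s\ad\gamma}(u)]$ together with the cancellation $RC_u^{s\gamma}\big(e^{-s\ad\gamma}(u)\big)=RC_u^{s\gamma}\big(C_u^{-s\gamma}(u)\big)=u$ shows that $\tilde E_s$ is tangential, concentrated at $u$, with $u$-component $\gamma\sslash RC_u^{s\gamma}$; hence $\diver\tilde E_s=\diver_u(\gamma\sslash RC_u^{s\gamma})$, and the cocycle rule supplies the pushforward $\sslash\,C_u^{-s\gamma}$, reproducing \eqref{eq:JDef} verbatim. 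The one genuinely delicate point here is that $s\mapsto C_u^{-s\gamma}$ is \emph{not} a one-parameter subgroup (the content of the Warnings after \eqref{eq:RCDef}), so the factor $RC_u^{s\gamma}=(C_u^{-s\gamma})^{-1}$ must be tracked honestly; fortunately the cancellation $RC\circ C=\mathrm{id}$ keeps it manageable.

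Granting the identification, the first two identities are formal. For \eqref{eq:JhProperty} I would invert \eqref{eq:RCh} to obtain $C_u^{-\bch(\alpha,\beta)}=C_u^{-\beta'}\sslash C_u^{-\alpha}$ with $\beta'=\beta\sslash RC_u^{\alpha}$, apply $j$, and expand by the cocycle rule. For \eqref{eq:JuvProperty} I would set
\[ \Theta:=C_u^{-\alpha\sslash RC_v^\beta}\sslash C_v^{-\beta}=C_v^{-\beta\sslash RC_u^\alpha}\sslash C_u^{-\alpha}, \]
the middle equality being precisely \eqref{eq:CuCv}, compute $j(\Theta)$ by the cocycle rule along each of the two factorizations, and equate the two results.

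The last identity \eqref{eq:JtProperty} is the one I expect to cost the most, and is where the main obstacle lies. Here one must pass the automorphism through the \emph{non-injective} merge $tm^{uv}_w$, so beyond the cocycle rule one needs the compatibility of the divergence with merging, namely $\diver_w(\,\cdot\,\sslash tm^{uv}_w)=(\diver_u+\diver_v)(\,\cdot\,)\sslash tm^{uv}_w$ on the relevant arguments --- geometrically, a $w$-leaf of a merged tree is an old $u$-leaf or an old $v$-leaf, and the root-gluing sum defining $\diver$ splits accordingly. This upgrades to a ``functoriality under merge'' statement $j(\bar g)=j(g)\sslash tm^{uv}_w$ whenever $g\sslash tm^{uv}_w=tm^{uv}_w\sslash\bar g$; feeding in the $C$-form of \eqref{eq:nts-t}, which identifies $g=C_v^{-\gamma'}\sslash C_u^{-\gamma}$ (with $\gamma'=\gamma\sslash RC_u^\gamma$) with $\bar g=C_w^{-(\gamma\sslash tm^{uv}_w)}$, then yields \eqref{eq:JtProperty} after one more application of the cocycle rule. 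A fully self-contained fallback, should the cocycle be awkward to install in this ``cyclic words of trees'' formalism, is to differentiate the integral \eqref{eq:JDef} in $s$ directly and reduce everything to the infinitesimal cocycle identity $\diver_u[D,D']=D(\diver_u D')-D'(\diver_u D)$ for tangential derivations; this is more computational but uses only tools already at hand.
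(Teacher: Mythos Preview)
Your approach is correct and is genuinely different from the paper's primary proof. The paper's own argument (deferred to Section~\ref{subsec:JProperties}, described in the introduction as ``painful computations'' and ``less reasonable'') is a direct computational verification starting from the integral formula~\eqref{eq:JDef}. You instead recognize $J_u(\gamma)=j(C_u^{-\gamma})$ as a value of the Alekseev--Torossian divergence cocycle and deduce \eqref{eq:JhProperty} and \eqref{eq:JuvProperty} immediately from the cocycle identity applied to the automorphism factorizations \eqref{eq:RCh} and \eqref{eq:CuCv}. This is clean and buys you the first two equations almost for free; the paper's route buys self-containment (no reliance on the cocycle being well-defined in this precise setting) at the cost of substantial calculation. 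Your approach is in fact close in spirit to the paper's \emph{alternative} proof sketched in Section~\ref{sec:ft}, which also bypasses the hard computations by interpreting $J$ as the wheels part of a logarithm in the arrow-diagram MMA $\Pbh$; your version has the advantage of not needing the full finite-type apparatus.

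Two points deserve care. First, in your derivation of $J_u(\gamma)=j(C_u^{-\gamma})$ you correctly note that $s\mapsto C_u^{-s\gamma}$ is not a one-parameter subgroup; your handling via $\tilde E_s=\Phi_s^{-1}\circ\dot\Phi_s$ and the cancellation $RC_u^{s\gamma}(C_u^{-s\gamma}(u))=u$ is right, but you should also check that the cocycle convention you write, $j(g\sslash h)=j(h)+j(g)\sslash h$, matches the one that produces the $\sslash C_u^{-s\gamma}$ factor in~\eqref{eq:JDef} upon integration (it does, but the sign and order conventions are easy to get backwards). Second, for \eqref{eq:JtProperty} the real content is your ``functoriality under merge'' statement $j(\bar g)=j(g)\sslash tm^{uv}_w$ for intertwining pairs. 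This is not a consequence of the cocycle property alone, since $tm^{uv}_w$ is not an automorphism but a morphism between different free Lie algebras; it requires the separate lemma $\diver_w(\lambda\sslash tm^{uv}_w)=(\diver_u\lambda+\diver_v\lambda)\sslash tm^{uv}_w$ that you state. That lemma is true and easy (your ``a $w$-leaf is an old $u$-leaf or an old $v$-leaf'' argument is exactly the proof), but the passage from the infinitesimal statement to the group-level functoriality of $j$ still needs an integration along a path, and you should say which path: the same $s\mapsto C_v^{-s\gamma'}\sslash C_u^{-s\gamma}$ that you used implicitly works. With that filled in, your argument is complete.
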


We postpone the proof of this proposition to
Section~\ref{subsec:JProperties}.

\begin{remark} \label{rem:JCharacterization}
$J_u$ can be characterized as the unique functional
$J_u\colon\FL(T)\to\CW(T)$ which satisfies Equation~\eqref{eq:JhProperty}
as well as the conditions $J_u(0)=0$ and
\begin{equation} \label{eq:dJ0}
  \left.\frac{d}{d\epsilon}J_u(\epsilon\gamma)\right|_{\epsilon=0}
  = \diver_u(\gamma),
\end{equation}
which in themselves are easy consequences of the definition of $J_u$,
Equation~\eqref{eq:JDef}. Indeed, taking $\alpha=s\gamma$ and
$\beta=\epsilon\gamma$ in Equation~\eqref{eq:JhProperty}, where $s$
and $\epsilon$ are scalars, we find that
\[ J_u((s+\epsilon)\gamma) = J_u(s\gamma)
    + J_u(\epsilon\gamma\sslash RC_u^{s\gamma}) \sslash C_u^{-s\gamma}. \]
Differentiating the above equation with respect to $\epsilon$ at
$\epsilon=0$ and using Equation~\eqref{eq:dJ0}, we find that
\[ \frac{d}{ds}J_u(s\gamma) =
  \diver_u(\gamma\sslash  RC_u^{s\gamma}) \sslash C_u^{-s\gamma}, \]
and integrating from $0$ to $1$ we get Equation~\eqref{eq:JDef}.
\end{remark}

Finally for this section, one may easily verify that the degree $1$ piece
of $\CW$ is preserved by the actions of $C_u^\gamma$ and $RC_u^\gamma$, and
hence it is possible to reduce modulo degree $1$. Namely, set
$\CWr(T):=\CW(T)/\text{deg 1} = \CW^{\,>1}(T)$, and all operations remain
well defined and satisfy the same identities.

\subsection{The MMA $M$} \label{subsec:MMAM}
Let $M$ be the collection $\{M(T;H)\}$, where
\[ M(T;H):=\FL(T)^H\times\CWr(T)=M_0(T;H)\times\CWr(T) \]
(I really mean $\times$, not $\otimes$). The collection $M$ has MMA
operations as follows:

\begin{itemize}
\item $t\sigma^u_v$, $t\eta^u$, and $tm^{uv}_w$ are defined by the same
  formulae as in Section~\ref{subsec:FLSuccess}. Note that these formulae
  make sense on $\CW$ and on $\CWr$ just as they do on $\FL$.
\item $h\sigma^x_y$, $h\eta^x$, and $hm^{xy}_z$ are extended to act as the
  identity on the $\CWr(T)$ factor of $M(T;H)$.
\item If $\mu_i=(\lambda_i;\omega_i)\in M(T_i;H_i)$ for $i=1,2$ (and, of
  course, $T_1\cap T_2=\emptyset=H_1\cap H_2$), set
  \[ \mu_1\ast\mu_2:=
    (\lambda_1\ast\lambda_2;\ \iota_1(\omega_1)+\iota_2(\omega_2)),
  \]
  where $\iota_i$ are the obvious inclusions
  $\iota_i\colon\CWr(T_i)\to\CWr(T_1\cup T_2)$.
\item The only truly new definition is that of $tha^{ux}$:
  \[ (\lambda;\omega)\sslash tha^{ux} :=
    (\lambda;\ \omega+J_u(\lambda_x)) \sslash RC_u^{\lambda_x}.
  \]
  Thus the ``new'' $tha^{ux}$ is just the ``old'' $tha^{ux}$, with an added
  term of $J_u(\lambda_x)$.
\item $t\epsilon_u:=(();\ 0)$ and $h\epsilon_x:=((x\to 0);\ 0)$.
\end{itemize}

\begin{theorem} \label{thm:zeta} $M$, with the operations defined above,
is a meta-monoid-action (MMA). Furthermore, if $\zeta\colon\Kbhz\to M$ is
defined on the generators in the same way as $\zeta_0$, except extended
by $0$ to the $\CWr$ factor, $\zeta(\rho^\pm_{ux}):=((x\to\pm u);\ 0)$,
then it is well-defined; namely, the values above
satisfy the relations in Definition~\ref{def:Kbhz}.
\end{theorem}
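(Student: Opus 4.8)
The plan is to exploit the product structure $M(T;H)=M_0(T;H)\times\CWr(T)$ and to split every assertion into its $M_0$-component and its $\CWr$-component. Projected to the $M_0$ factor, each MMA axiom is exactly the corresponding axiom already established in Theorem~\ref{thm:MMAZero}, and each relation of Definition~\ref{def:Kbhz} is exactly what was checked for $\zeta_0$ in Proposition~\ref{prop:zetazero}. So the entire burden falls on the $\CWr$-components, and the whole argument becomes a bookkeeping exercise feeding on the three identities of Proposition~\ref{prop:JProperties}.

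To see that $M$ is an MMA, I would first dispose of all axioms not involving $tha$: on $\CWr$ the operations $h\sigma^x_y$, $h\eta^x$, $hm^{xy}_z$ act as the identity, while $t\sigma^u_v$, $t\eta^u$, $tm^{uv}_w$ act by the evident letter substitutions and $\ast$ by inclusion-and-sum, so each such axiom is immediate from associativity and naturality of substitution. The crux is the three axioms carrying a $tha$, namely \eqref{eq:thatha}, \eqref{eq:taction} and \eqref{eq:haction}. For each I would expand both sides using $(\lambda;\omega)\sslash tha^{ux}=(\lambda;\ \omega+J_u(\lambda_x))\sslash RC_u^{\lambda_x}$, writing, as in the proof of Theorem~\ref{thm:MMAZero}, $\alpha=\lambda_x$, $\beta=\lambda_y$, $\alpha'=\alpha\sslash RC_v^\beta$, $\beta'=\beta\sslash RC_u^\alpha$, and $\gamma'=\gamma\sslash RC_u^\gamma$. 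The operator identities \eqref{eq:RCuRCv}, \eqref{eq:nts-t} and \eqref{eq:RCh} — proved inside Theorem~\ref{thm:MMAZero} and valid on $\CW$ and $\CWr$ since the maps there are letter substitutions — make the pure-$\omega$ parts of the two sides literally coincide. What remains is the equality of the $J$-parts, and a short rearrangement (cancelling common $RC$'s and using the inverse relation $C_u^{-\gamma}\sslash RC_u^\gamma=I$) shows that \eqref{eq:thatha}, \eqref{eq:taction}, \eqref{eq:haction} on $\CWr$ are respectively equivalent to \eqref{eq:JuvProperty}, \eqref{eq:JtProperty}, \eqref{eq:JhProperty}. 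These are granted by Proposition~\ref{prop:JProperties}, so the MMA axioms hold.

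For well-definedness of $\zeta$ I note that all four generators carry $\omega=0$, so on the $\CWr$ factor I need only check that no relation secretly produces a nonzero wheel. The relations with no $tha$ (Relabelling, Cutting and puncturing, Inverses, Tail-commutativity) keep $\omega=0$ on both sides trivially. In the Conjugation relation the single occurrence of $tha$ is $tha^{uz}$, applied when the head has become $\lambda_z=s_2 v$, which contains no $u$; hence the integrand of $J_u$ in \eqref{eq:JDef} vanishes identically, $J_u(\lambda_z)=0$, and $\omega$ stays $0$ as it does on the right-hand side. In Framing independence \eqref{eq:FI} the operation is $tha^{ux}$ with $\lambda_x=\pm u$: here $\diver_u(\pm u)=\pm\tr(u)$ is a degree-$1$ wheel, hence $0$ in $\CWr=\CW^{\,>1}$, so $J_u(\pm u)=0$ and $RC_u^{\pm u}=I$, giving $\rho^\pm_{ux}\sslash tha^{ux}=\rho^\pm_{ux}$.

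The real difficulty is concentrated entirely in Proposition~\ref{prop:JProperties}, whose proof is deferred to Section~\ref{subsec:JProperties}; granting it, everything above is bookkeeping. Within the present argument the one genuinely load-bearing subtlety is the reduction modulo degree $1$: it is precisely the passage from $\CW$ to $\CWr$ that kills $\diver_u(\pm u)=\pm\tr(u)$ and thereby rescues Framing independence, which would otherwise fail.
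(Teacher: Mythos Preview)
Your proposal is correct and follows essentially the same route as the paper's proof: reduce to the $M_0$-part via Theorem~\ref{thm:MMAZero} and Proposition~\ref{prop:zetazero}, observe that on the $\CWr$-factor only the three $tha$-axioms \eqref{eq:thatha}--\eqref{eq:haction} require work and that these unwind to exactly the three identities of Proposition~\ref{prop:JProperties}, and finally check that the Conjugation relation produces no wheels (since $J_u(s_2 v)=0$ when $u\neq v$) while FI produces only a degree-$1$ wheel killed in $\CWr$. Your explicit invocation of the operator identities \eqref{eq:RCuRCv}, \eqref{eq:nts-t}, \eqref{eq:RCh} to align the outer $RC$'s before comparing the $J$-terms is a useful clarification that the paper leaves implicit.
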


\begin{proof} Given Theorem~\ref{thm:MMAZero} and
Proposition~\ref{prop:zetazero}, the only non-obvious checks remaining are
the ``wheel parts'' of the main equations defining and MMA, Equations
\eqref{eq:hassoc}--\eqref{eq:haction}, the Conjugation
Relation~\eqref{eq:ConjugationRelation}, and the FI
relation~\eqref{eq:FI}. As the only interesting wheels-creation occurs
with the operation $tha$, \eqref{eq:hassoc} and \eqref{eq:tassoc} are
easy. As easily $J_u(v)=0$ if $u\neq v$, no wheels are created by the
$tha$ action within the proof of Proposition~\ref{prop:zetazero}, so that
proof still holds. We are left with \eqref{eq:thatha}--\eqref{eq:haction}
and \eqref{eq:ConjugationRelation}--\eqref{eq:FI}.

Let us start with the wheels part of Equation~\eqref{eq:thatha}. If
$\mu=((x\to\alpha,y\to\beta,\ldots);\omega)\in M$, then
\[ \mu\sslash tha^{ux}
  = ((
    x\to\alpha\sslash RC_u^{\alpha},
    y\to\beta\sslash RC_u^{\alpha},
    \ldots);
    (\omega+J_u(\alpha))\sslash RC_u^{\alpha})
\]
and hence the wheels-only part of $\mu\sslash tha^{ux}\sslash tha^{vy}$ is
\[ \omega\sslash RC_u^{\alpha}
    \sslash RC_v^{\beta\sslash RC_u^{\alpha}}
  +J_u(\alpha)\sslash RC_u^{\alpha}
    \sslash RC_v^{\beta\sslash RC_u^{\alpha}}
  +J_v(\beta\sslash RC_u^{\alpha})
    \sslash RC_v^{\beta\sslash RC_u^{\alpha}}
\]
\[ = \left[
    \omega+J_u(\alpha)
      + J_v(\beta\sslash RC_u^{\alpha})\sslash C_u^{-\alpha}
    \right]
    \sslash RC_u^{\alpha} \sslash RC_v^{\beta\sslash RC_u^{\alpha}}.
\]
In a similar manner, the wheels-only part of $\mu\sslash tha^{vy}\sslash
tha^{ux}$ is
\[ \left[
    \omega+J_v(\beta)
      + J_u(\alpha\sslash RC_v^{\beta})\sslash C_v^{-\beta}
    \right] 
    \sslash RC_v^{\beta} \sslash RC_u^{\beta\sslash RC_v^{\beta}}.
\]
Using Equation~\eqref{eq:RCuRCv} the operators outside the square brackets
in the above two formulae are the same, and so we only need to verify that
\[
  \omega+J_u(\alpha) + J_v(\beta\sslash RC_u^{\alpha})\sslash C_u^{-\alpha}
  = \omega+J_v(\beta) + J_u(\alpha\sslash RC_v^{\beta})\sslash C_v^{-\beta}.
\]
But this is Equation~\eqref{eq:JuvProperty}. In a similar manner, the
wheels parts of Equations~\eqref{eq:taction} and~\eqref{eq:haction}
reduce to Equations~\eqref{eq:JtProperty} and~\eqref{eq:JhProperty},
respectively. One may also verify that no wheels appear within
Equation~\eqref{eq:ConjugationRelation}, and that wheels appear in
Equation~\eqref{eq:FI} only in degree $1$, which is eliminated in $\CWr$.
\qed
\end{proof}

Thus we have a tree-and-wheel valued invariant $\zeta$ defined on $\Kbhz$,
and thus $\delta\sslash\zeta$ is a tree-and-wheel valued invariant of
tangles and w-tangles.

As we shall see in Section~\ref{sec:ft}, the wheels part $\omega$ of $\zeta$
is related to the wheels part of the Kontsevitch integral. Thus by
finite-type folklore (e.g.~\cite{Kricker:Kontsevich}), the Abelianization of
$\omega$ (obtained by declaring all the letters in
$\CW(T)$ to be commuting) should be closely related to the multi-variable
Alexander polynomial. More on that in Section~\ref{sec:uA}. I don't know
what the bigger (non-commutative) part of $\omega$ measures.

\section{Some Computational Examples} \label{sec:Computations}
Part of the reason I am happy about the invariant $\zeta$ is that it
is relatively easily computable. Cyclic words are easy to implement,
and using the Lyndon basis (e.g.~\cite[Chapter~5]{Reutenauer:FreeLie}),
free Lie algebras are easy too. Hence I include here a demo-run of a
rough implementation, written in {\em Mathematica}. The full source
files are available at~\web{}.

\subsection{The Program}

First we load the package {\tt FreeLie.m}, which contains a collection of
programs to manipulate series in completed free Lie algebras and series of
cyclic words. We tell {\tt FreeLie.m} to show series by default only up to
degree 3, and that if two (infinite) series are compared, they are to be
compared by default only up to degree 5:

\mathinclude{LoadFreeLie}

Merely as a test of {\tt FreeLie.m}, we tell it to set {\tt t1} to be
$\bch(u,v)$. The computer's response is to print that series to degree 3:

\dialoginclude{BCHDemo1}

Note that by default Lie series are printed in ``top bracket form'', which
means that brackets are printed above their arguments, rather than around
them. Hence $\overbracket[0.5pt][1pt]{u\overbracket[0.5pt][1pt]{uv}}$
means $[u,[u,v]]$. This practise is especially advantageous when it is
used on highly-nested expressions, when it becomes difficult for the
eye to match left brackets with the their corresponding right brackets.

Note also that that {\tt FreeLie.m} utilizes {\em lazy evaluation}, meaning
that when a Lie series (or a series of cyclic words) is defined, its
definition is stored but no computation take place until it is printed or
until its value (at a certain degree) is explicitly requested. Hence {\tt
t1} is a reference to the entire Lie series $\bch(u,v)$, and not merely to
the degrees 1--3 parts of that series, which are printed above. Hence when
we request the value of {\tt t1} to degree 6, the computer complies:

\dialoginclude{BCHDemo2}

The package {\tt FreeLie.m} know about various free Lie algebra operations,
but not about our specific circumstances. Hence we have to make some
further definitions. The first few are set-theoretic in nature. We define
the ``domain'' of a function stored as a list of {\it key$\to$value} pairs
to be the set of ``first elements'' of these pairs; meaning, the set of
keys. We define what it means to remove a key (and its corresponding value),
and likewise for a list of keys. We define what it means for two functions
to be equal (their domains must be equal, and for every key $\#$, we are to
have $\#\sslash f_1=\#\sslash f_2$). We also define how to apply a Lie
morphism {\tt mor} to a function (apply it to each value), and how to
compare $(\lambda,\omega)$ pairs (in $\FL(T)^H\times\CWr(T)$):

\mathinclude{SetTheory}

Next we enter some free-Lie definitions that are not a part of {\tt
FreeLie.m}. Namely we define $RC_{u,\baru}^{\gamma}(s)$ to be the result of
``stable application'' of the morphism $u\to e^{\ad(\gamma)}(\baru)$ to
$s$ (namely, apply the morphism repeatedly until things stop changing; at
any fixed degree this happens after a finite number of iterations). We
define $RC_u^{\gamma}$ to be $RC_{u,\baru}^{\gamma}\sslash(\baru\to
u)$. Finally, we define $J$ as in Equation~\eqref{eq:JDef}:

\mathinclude{LieDefs}

Mostly to introduce our notation for cyclic words, let us compute
$J_v(\bch(u,v))$ to degree 4. Note that when a series of wheels is printed
out here, its degree 1 piece is greyed out to honour the fact that it
``does not count'' within $\zeta$:

\dialoginclude{JBCH}

Next is a series of definitions that implement the definitions of $\ast$,
$tm$, $hm$, and $tha$ following Sections~\ref{subsec:FLSuccess}
and~\ref{subsec:MMAM}:

\mathinclude{MMADefs}

Next we set the values of $\zeta(t\epsilon_x)$ and $\zeta(\rho^{\pm}_{ux})$,
which we simply denote $t\epsilon_x$ and $\rho^{\pm}_{ux}$:

\mathinclude{rho}

The final bit of definitions have to do with 3-dimensional tangles. We set
$R^+$ to be the value of $\zeta(\delta(\overcrossing))$ as in the proof of
Theorem~\ref{thm:Generators}, likewise for $R^-$, and we define {\tt dm}
following Equation~\eqref{eq:dm}:

\mathinclude{udefs}

\subsection{Testing Properties and Relations} It is always good to test
both the program and the math by verifying that the operations we have
implemented satisfy the relations predicted by the mathematics. As
a first example, we verify the meta-associativity of $tm$. Hence in
line 1 below we set {\tt t1} to be the element $t_1=((x\to u+v+w,
y\to[u,v]+[v,w]);\,uvw)$ of $M(u,v,w;x,y)$. In line 2 we compute
$t_1\sslash tm^{uv}_u$, in line 3 we compute $t_2:=t_1\sslash
tm^{uv}_u\sslash tm^{uw}_u$ and store its value in {\tt t2}, in line 4 we
compute $t_1\sslash tm^{vw}_v$, in line 5 we compute $t_3:=t_1\sslash
tm^{vw}_v\sslash tm^{uv}_u$ and store its value in {\tt t3}, and then in
line 6 we test if $t_2$ is equal to $t_3$. The computer thinks the answer
is ``{\tt True}'', at least to the degree tested:

\dialoginclude{Testing_tm}

The corresponding test for the meta-associativity of $hm$ is a bit
harder, yet produces the same result. Note that we have declared {\tt
\$SeriesCompareDegree} to be higher than {\tt \$SeriesShowDegree}, so the
``{\tt True}'' output below means a bit more than the visual comparison of
lines 3 and 5:

\dialoginclude{Testing_hm}

We next test the meta-action axiom $t$ on $((x\to u+[u,t],y\to
u+[u,t]);\, uu+tuv)$ and the meta-action axiom $h$ on $((x\to u+[u,v],y\to
v+[u,v]);\,uu+uvv)$:

\dialoginclude{taction}

\dialoginclude{haction}

And finally for this testing section, we test the Conjugation Relation of
Equation~\eqref{eq:ConjugationRelation}:

\dialoginclude{TestingConjugationRelation}

\subsection{Demo Run 1 --- the Knot $8_{17}$} \label{subsec:Demo1}
We are ready for a more substantial computation --- the invariant of
the knot $8_{17}$. We draw $8_{17}$ in the plane, with all but the
neighbourhoods of the crossings dashed-out. We thus get a tangle $T_1$
which is the disjoint union of 8 individual crossings (4 positive and
4 negative). We number the 16 strands that appear in these 8 crossings
in the order of their eventual appearance within $8_{17}$, as seen below.

\parpic[r]{\input{figs/817.pstex_t}}
The 8-crossing tangle $T_1$  we just got has a rather boring $\zeta$
invariant, a disjoint merge of 8 $\rho^{\pm}$'s. We store it in {\tt
$\mu$1}. Note that we used numerals as labels, and hence in the expression
below top-bracketed numerals should be interpreted as symbols and not
as integers. Note also that the program automatically
converts two-digit numerical labels into alphabetical symbols, when
these appear within Lie elements. Hence in the output below, ``{\tt a}''
is ``10'', ``{\tt c}'' is ``12'', ``{\tt e}'' is ``14'', and ``{\tt g}''
is ``16'':

\dialoginclude{817-1}

Next is the key part of the computation. We ``sew'' together the strands of
$T_1$ in order by first sewing 1 and 2 and naming the result 1, then sewing
1 and 3 and naming the result 1 once more, and on until everything is sewn
together to a single strand named 1. This is done by applying $dm^{1k}_1$
repeatedly to {\tt $\mu$1}, for $k=2,\ldots,16$, each time storing the
result back again in {\tt $\mu$1}. Finally, we only wish to print the
wheels part of the output, and this we do to degree 6:

\dialoginclude{817-2}

Let $A(X)$ be the Alexander polynomial of $8_{17}$. Namely,
$A(X)=-X^{-3}+4X^{-2}-8X^{-1}+11-8X+4X^2-X^3$. For comparison with the
above computation, we print the series expansion of $\log A(e^x)$, also to
degree 6:

\dialoginclude{817-3}

\subsection{Demo Run 2 --- the Borromean Tangle} In a similar manner we
compute the invariant of the {\it rgb}-coloured Borromean tangle, shown below.

\parpic[r]{\imagetop{\input{figs/BorromeanTangle.pstex_t}}}
We label the edges near the crossings as shown, using the labels
$\{r,1,2,3\}$ for the $r$ component, $\{g,4,5,6\}$ for the $g$ component,
and $\{b,7,8,9\}$ for the $b$ component. We let {\tt $\mu$2} store the
invariant of the disjoint union of 6 independent crossings labelled as in
the Borromean tangle, we concatenate the numerically-labelled strands into
their corresponding letter-labelled strands, and we then print {\tt $\mu$2},
which now contains the invariant we seek:

\dialoginclude{Borromean1}

We then print the $r$-head part of the tree part of the invariant to degree
5 (the $g$-head and $b$-head parts can be computed in a similar way, or
deduced from the cyclic symmetry of $r$, $g$, and $b$), and the wheels part
to the same degree:

\dialoginclude{Borromean2}

\dialoginclude{Borromean3}

A more graphically-pleasing presentation of the same values, with the
degree raised to 6, appears in Figure~\ref{fig:BorromeanInvariant}.

\begin{figure}
\[ \includegraphics[width=6in]{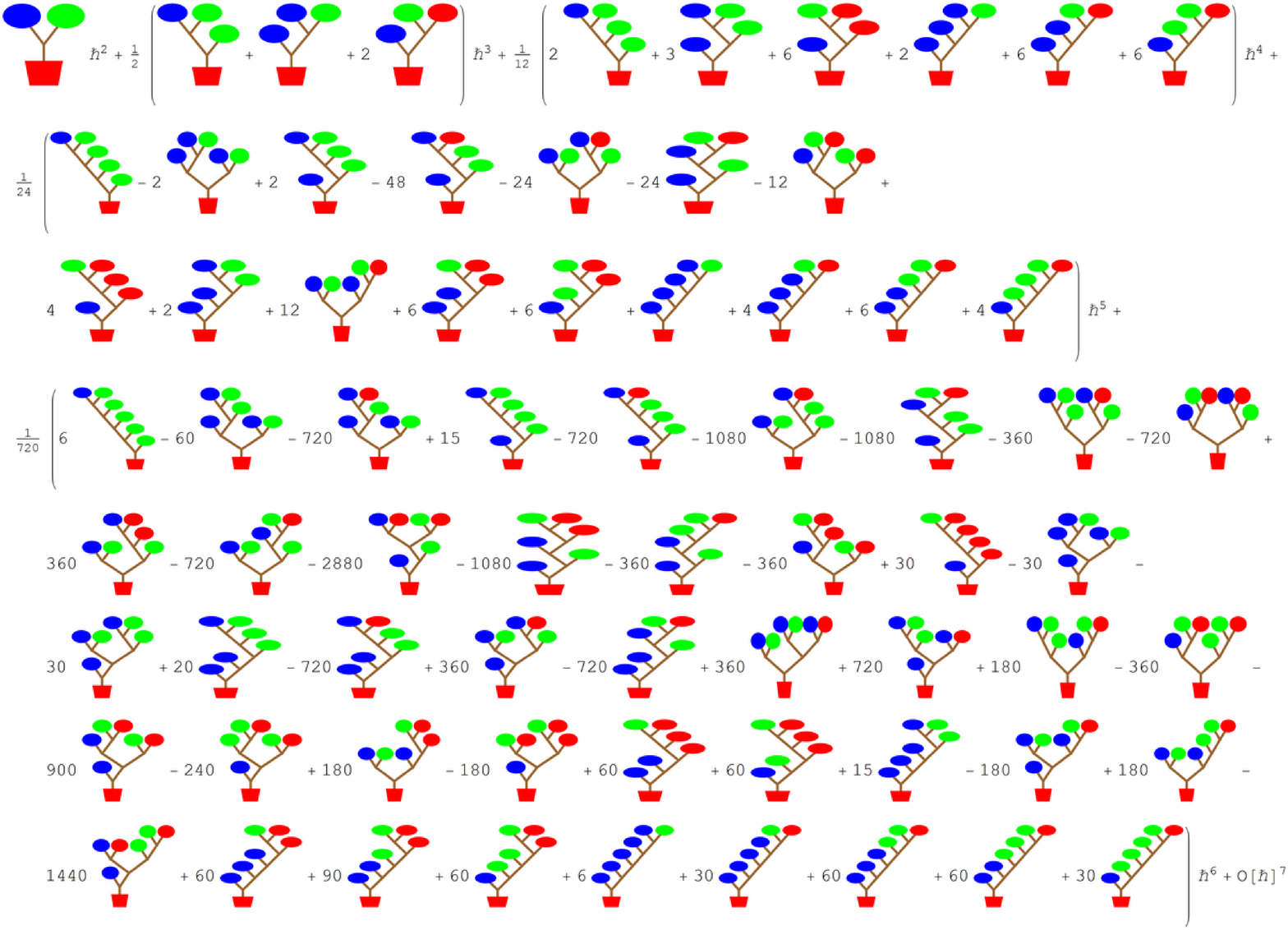} \]
\[ \includegraphics[width=6in]{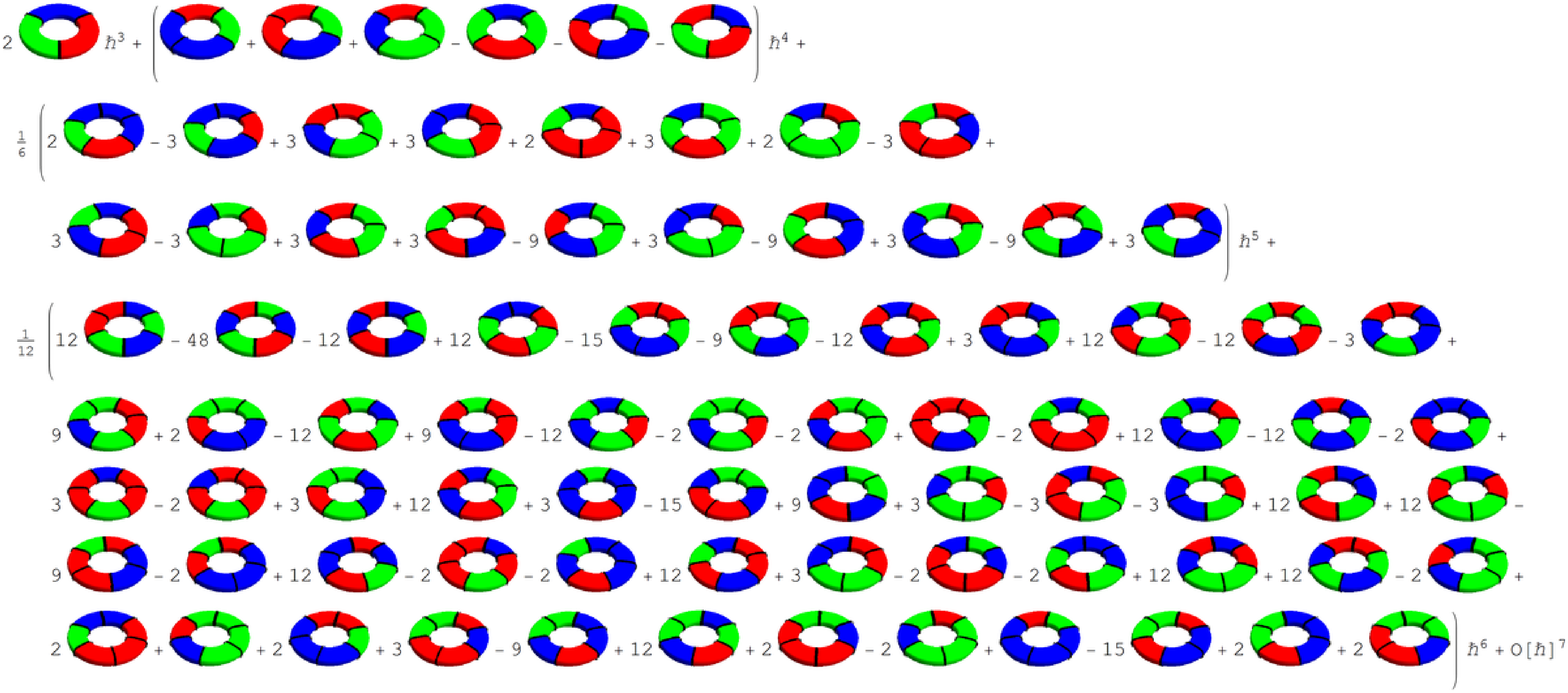} \]
\caption{
  The redhead part of the tree part, and the wheels part,
  of the invariant of the Borromean tangle, to degree 6.
} \label{fig:BorromeanInvariant}
\end{figure}

\section{Sketch of The Relation with Finite Type Invariants}
\label{sec:ft}

One way to view the invariant $\zeta$ of Section~\ref{sec:zeta} is as
a mysterious extension of the reasonably natural invariant $\zeta_0$
of Section~\ref{sec:trees}. Another is as a solution to a universal
problem --- as we shall see in this section, $\zeta$ is a universal
finite type invariant of objects in $\Kbhz$. Given that $\Kbhz$ is
closely related to $\wT$ (w-tangles), and given that much was already
said on finite type invariants of w-tangles in~\cite{WKO}, this section
will be merely a sketch, difficult to understand without reading much
of sections 1--5 of~\cite{WKO}, as well as the parts of section 6 that
concern with caps.

Over all, defining $\zeta$ using the language of Sections~\ref{sec:trees}
and~\ref{sec:zeta} is about as difficult as using finite type invariants.
Yet computing it using the language of Sections~\ref{sec:trees}
and~\ref{sec:zeta} is much easier while proving invariance is
significantly harder.

\subsection{A circuit algebra description of $\Kbhz$} \label{subsec:CA}
A w-tangle represents a collection of ribbon-knotted tubes in $\bbR^4$. It
follows from Theorem~\ref{thm:surjective} that every rKBH can be obtained
from a w-tangle by capping some of its tubes and ``puncturing'' the
rest, where ``puncturing'' a tube means ``replacing it with its spine,
a strand that runs along it''. Using thick red lines to denote tubes,
red bullets to denote caps, and dotted blue lines to denote punctured
tubes, we find that
\[ \imagetop{\input{figs/CA.pstex_t}}
  \qquad\imagetop{\input{figs/R1.pstex_t}}\hspace{-1in}
\]
Note that punctured tubes (meanings strands or ``hoops'') can only go under
capped tubes (balloons), and that while it is allowed to slide tubes ``over''
caps, it is not allowed to slide them ``under'' caps. Further explanations
and the meaning of ``$\CA$'' are in~\cite{WKO}. The ``red bullet''
subscript on the right hand side indicates that we restrict our attention
to the subspace in which all red strands are eventually capped. We leave it
to the reader to interpret the operations $hm$, $tha$, and $tm$ is this
language ($tm$ is non-obvious!).

\subsection{Arrow diagrams for $\Kbhz$} \label{subsec:ArrowDiagrams}
As in~\cite{WKO}, one we finite type invariants of elements on $\Kbhz$
bi considering iterated differences of crossings and non-crossings
(``virtual crossings''), and then again as in~\cite{WKO}, we find that
the arrow-diagram space $\Abh(T;H)$ corresponding to these invariants
may be described schematically as follows:
\[ \input{figs/Abh.pstex_t}. \]
In the above, arrow tails may land only on the red ``tail'' strands, but
arrow heads may land on either kind of strand. The ``Relations'' are the TC
and $\aft$ relations of~\cite[Section~2.3]{WKO}, the CP relation
of~\cite[Section~6.2]{WKO}, and the relation $D_L=D_R=0$, which corresponds
to the R1 relation ($D_L$ and $D_R$ are defined in~\cite[Section~3]{WKO}).

The operation $hm$ acts on $\Abh$ by concatenating two head stands. The
operation $tha$ acts by duplicating a head strand (with the usual
summation over all possible ways of reconnecting arrow-heads as
in~\cite[Section~2.5.1.6]{WKO}), changing the colour of one of the
duplicates to red, and then concatenating it to the beginning of some
tail strand.

We note that modulo the relations, one may eliminate all arrow-heads from
all tail strands. For diagrams in which there are no arrow-heads on tail
strands, the operation $tm$ is defined by merging together two tail
strands. The TC relation implies that arrow-tails on the resulting
tail-strand can be order in any desired way.

As in~\cite[Section~3.5]{WKO}, $\Abh$ has an alternative model in which
internal ``2-in 1-out'' trivalent vertices are allowed, and in which we
also impose the $\aAS$, $\aSTU$ and $\aIHX$ relations (ibid.).

\subsection{The algebra structure on $\Abh$ and its primitives}
\label{subsec:Primitives}
For any fixed finite sets $T$ and $H$, the space $\Abh(T;H)$ is a
co-commutative bi-algebra. Its product defined using the disjoint union
followed by the $tm$ operation on all tail strands and the $hm$ operation
on all head strands, and its co-product is the ``sum of all splittings''
as in~\cite[Section~3.2]{WKO}. Thus by Milnor-Moore, $\Abh(T;H)$ is
the universal enveloping algebra of its set of primitives $\Pbh$. The
latter is the set of connected diagrams in $\Abh$ (modulo relations),
and those, as in~\cite[Section~5.2]{WKO}, are the trees and the degree
$>1$ wheels. (Though note that even if $T=H=\{1,\ldots,n\}$, the algebra
structure on $\Abh(T;H)$ is different from the algebra structure on the
space $\calA^w(\uparrow_n)$ of ibid.). Identifying trees with $\FL(T)$
and wheels with $\CWr(T)$, we find that
\[ \Pbh(T;H)\cong\FL(T)^H\times\CWr(T)=M(T;H). \]

\begin{theorem} By taking logarithms (using formal power series and the
algebra structure of $\Abh$), $\Pbh(T;H)$ inherits the structure
of an MMA from the group-like elements of $\Abh$. Furthermore,
$\Pbh(T;H)$ and $M(T;H)$ are isomorphic as MMAs.
\end{theorem}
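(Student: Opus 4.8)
The plan is to obtain the MMA structure on $\Pbh$ by restricting the operations of $\Abh$ to its group-like elements and transporting along the logarithm, and then to identify the result with $M$ operation-by-operation, saving the divergence term $J_u$ for last. First I would check that each operation on $\Abh$ described in Section~\ref{subsec:ArrowDiagrams} --- $hm$ (concatenation of head strands), $tm$ (merging of tail strands), $tha$ (duplication of a head strand, recolouring, and concatenation to a tail), together with the relabellings, deletions, and the product $\ast$ --- is a morphism of coalgebras for the ``sum of all splittings'' coproduct of Section~\ref{subsec:Primitives}. This is routine: each operation is built from merging and duplicating strands, and the sum-over-reconnections prescription for arrow-heads is by construction compatible with the sum-over-splittings coproduct. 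Consequently these operations preserve the set $G(\Abh)$ of group-like elements; since the units $t\epsilon_u$ and $h\epsilon_x$ (empty diagrams) are group-like and the MMA axioms hold in $\Abh$, the group-likes form a sub-MMA. By the Milnor--Moore identification $\Abh=U(\Pbh)$ invoked above, $G(\Abh)=\exp(\Pbh)$ and $\log\colon G(\Abh)\to\Pbh$ is a bijection; transporting the operations through it endows $\Pbh$ with an MMA structure, which proves the first assertion.

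For the isomorphism with $M$, I would use the given linear identification $\Pbh(T;H)\cong\FL(T)^H\times\CWr(T)=M(T;H)$ and match operations one at a time. The operations $t\sigma$, $h\sigma$, $t\eta$, $h\eta$, and $\ast$ are manifestly the same strand manipulations on both sides, with $\ast$ adding wheel parts since $\log$ turns the product of group-likes into a sum. For $hm$: concatenating head strands is the coalgebra product in the head direction, so on logarithms it is exactly $\bch$, matching Section~\ref{subsec:FLSuccess}. For $tm$: merging tail strands is the letter-substitution $(u,v\mapsto w)$, again matching. These checks are direct.

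The substance is $tha^{ux}$. Its tree-level effect is to attach the recoloured (tail-valued) duplicate of the head $x$ to the front of tail $u$; read on logarithms, the arrow-tails originally on $u$ get conjugated by $\lambda_x$ repeatedly as the red duplicate is absorbed, which is precisely $RC_u^{\lambda_x}$ acting on the $\FL$-part, matching Equation~\eqref{eq:CRC}. The genuinely new content is the wheel-valued correction $W_u(\lambda_x)\in\CWr(T)$ produced when arrow-heads on the duplicated strand reconnect to the red copy and close into cycles. Rather than summing this term-by-term, I would show $W_u=J_u$ by the uniqueness characterization of Remark~\ref{rem:JCharacterization}: (i) $W_u(0)=0$, since with no head there is nothing to double; (ii) the linearization $\frac{d}{d\epsilon}W_u(\epsilon\gamma)\big|_{\epsilon=0}=\diver_u(\gamma)$, because the first-order part of the doubling is a single arrow-head reconnecting to the red copy attached at $u$, i.e.\ exactly one gluing of the root to a $u$-leaf, which is the definition of $\diver_u$; and (iii) the composition identity~\eqref{eq:JhProperty}, forced by the meta-action axiom $h$, Equation~\eqref{eq:haction}, holding in $\Abh$ --- extracting its wheel part relates $W_u(\bch(\alpha,\beta))$ to $W_u(\alpha)$ and $W_u(\beta\sslash RC_u^{\alpha})\sslash C_u^{-\alpha}$ in precisely the shape of~\eqref{eq:JhProperty}, exactly as in the proof of Theorem~\ref{thm:zeta} but with $J_u$ replaced by the as-yet-unknown $W_u$. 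By uniqueness $W_u=J_u$, so $tha^{ux}$ on $\Pbh$ agrees with its definition on $M$.

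The main obstacle is step~(ii): verifying that the first-order part of the arrow-diagram doubling is exactly $\diver_u$, and more generally that the reconnection combinatorics produce wheels at all, with the correct signs and orientations, rather than some other primitive. This is where the finite-type machinery of~\cite{WKO} --- the doubling and adjoint formulae of its Sections~3.5 and~5.2, together with the cap and CP conventions --- must be pinned down carefully. Once the infinitesimal term is identified as $\diver_u$ and the coalgebra-compatibility of $tha$ is in hand, the uniqueness characterization does the rest, which is exactly why I would route the hardest computation through Remark~\ref{rem:JCharacterization} instead of evaluating the integral defining $J_u$ directly.
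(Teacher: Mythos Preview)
Your proposal is correct and follows essentially the same approach as the paper's own sketch: the core idea in both is to identify the unknown wheel-valued functional (your $W_u$, the paper's $J'$) with $J_u$ via the uniqueness characterization of Remark~\ref{rem:JCharacterization}, with Equation~\eqref{eq:JhProperty} forced by the meta-action axiom $h$. You supply more detail on the coalgebra-morphism preliminaries and explicitly enumerate the three conditions to verify, but the strategy is identical.
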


\noindent{\em Sketch of the proof.} Once it is established
that $\Pbh(T;H)$ is an MMA, that $tm$ and $hm$ act in the same
way as on $M$ and that the tree part of the action of $tha$
is given using the $RC$ operation, it follows that the wheels
part of the action of $tha$ is given by some functional $J'$ which
necessarily satisfies Equation~\eqref{eq:JhProperty}.  But according
to Remark~\ref{rem:JCharacterization}, Equation~\eqref{eq:JhProperty}
and a few auxiliary conditions determine $J$ uniquely. These conditions
are easily verified for $J'$, and hence $J'=J$. This concludes the proof.

Note that the above theorem and the fact that $\Pbh(T;H)$ is an MMA
provide an alternative proof of Proposition~\ref{prop:JProperties} which
bypasses the hard computations of Section~\ref{subsec:JProperties}. In
fact, personally I first knew that $J$ exists and satisfies
Proposition~\ref{prop:JProperties} using the reasoning of
this section, and only then I observed using the reasoning of
Remark~\ref{rem:JCharacterization} that $J$ must be given by the formula in
Equation~\eqref{eq:JDef}.

\subsection{The homomorphic expansion $\Zbh$} \label{subsec:Zbh}
As in~\cite[Sections~3.4 and~5.1]{WKO}, there is a homomorphic expansion
(a universal finite type invariant with good composition properties)
$\Zbh\colon\Kbhz\to\Abh$ defined by mapping crossings to exponentials
of arrows. It is easily verified that $\Zbh$ is a morphism of MMAs,
and therefore it is determined by its values on the generators
$\rho^{\pm}$ of $\Kbhz$, which are single crossings in the language of
Section~\ref{subsec:CA}. Taking logarithms we find that $\log\Zbh=\zeta$
on the generators and hence always, and hence $\zeta$ is the logarithm
of a universal finite type invariant of elements of $\Kbhz$.

\section{The Relation with the BF Topological Quantum Field Theory}
\label{sec:BF}

\subsection{Tensorial Interpretation} \label{subsec:TensorialInterpretation}
Given a Lie algebra $\frakg$, any element of $\FL(T)$ can be
interpreted as a function taking $|T|$ inputs in $\frakg$
and producing a single output in $\frakg$. Hence, putting
aside issues of completion and convergence, there is a map
$\tau_1\colon\FL(T)\to\Fun(\frakg^T\to\frakg)$, where in general,
$\Fun(X\to Y)$ denotes the space of functions from $X$ to $Y$.
To deal with completions more precisely, we pick a formal parameter
$\hbar$, multiply the degree $k$ part of $\tau_1$ by $\hbar^k$,
and get a perfectly good $\tau = \tau_\frakg\colon\FL(T) \to
\Fun(\frakg^T\to\frakg\llbracket\hbar\rrbracket)$, where in general,
$V\llbracket\hbar\rrbracket := \bbQ\llbracket\hbar\rrbracket\otimes V$
for any vector space $V$. The map $\tau$ obviously extends to
$\tau\colon\FL(T)^H\to\Fun(\frakg^T\to\frakg^H\llbracket\hbar\rrbracket)$.

Similarly, if also $\frakg$ is finite dimensional, then by taking traces
in the adjoint representation we get a map $\tau = \tau_\frakg\colon\CW(T) \to
\Fun(\frakg^T\to\bbQ\llbracket\hbar\rrbracket)$. Multiplying this $\tau$
with the $\tau$ from the previous paragraph we get $\tau = \tau_\frakg
\colon M(T;H) \to \Fun(\frakg^T\to\frakg^H\llbracket\hbar\rrbracket)$.
Exponentiating, we get
\[ e^\tau\colon M(T;H) \to
  \Fun(\frakg^T\to\calU(\frakg)^{\otimes H}\llbracket\hbar\rrbracket).
\]

\subsection{$\zeta$ and BF Theory} Fix a finite dimensional Lie algebra
$\frakg$. In~\cite{CattaneoRossi:WilsonSurfaces} (see especially section
4), Cattaneo and Rossi discuss the BF quantum field theory with fields
$A\in\Omega^1(\bbR^4,\frakg)$ and $B\in\Omega^2(\bbR^4,\frakg^\ast)$
and construct an observable ``$U(A,B,\Xi)$'' for each ``long''
$\bbR^2$ in $\bbR^4$; meaning, for each 2-sphere in $S^4$ with a
prescribed behaviour at $\infty$. We interpret these as observables
defined on our ``balloons''.  The Cattaneo-Rossi observables are
functions of a variable $\Xi\in\frakg$, and they can be interpreted
as power series in a formal parameter $\hbar$. Further, given the
connection-field $A$, one may always consider its formal holonomy
along a closed path (a ``hoop'') and interpret it as an element in
$\calU(\frakg)\llbracket\hbar\rrbracket$. Multiplying these hoop
observables and also the Cattaneo-Rossi balloon observables, we get an
observable $\calO_\gamma$ for any KBH $\gamma$, taking values in
$\Fun(\frakg^T\to\calU(\frakg)^{\otimes H}\llbracket\hbar\rrbracket)$.

\begin{conjecture} \label{conj:BF} If $\gamma$ is an rKBH, then 
$\langle\calO_\gamma\rangle_{\text{BF}}=e^\tau(\zeta(\gamma))$.
\end{conjecture}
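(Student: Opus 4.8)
The plan is to recognize both sides of the claimed identity as a single Lie-algebraic weight system applied to a universal finite type invariant, and then to reduce everything to a check on the four generators of $\Kbhz$. Throughout I work in $\Kbhz$ (equivalently, assume Conjecture~\ref{conj:Kbhz}), so that $\zeta(\gamma)$ is defined.

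First I would endow the common target $\Fun(\frakg^T\to\calU(\frakg)^{\otimes H}\llbracket\hbar\rrbracket)$ with an MMA structure reflecting Lie theory: $hm^{xy}_z$ multiplies two holonomy slots inside $\calU(\frakg)$, $tm^{uv}_w$ identifies two $\frakg$-inputs, $\ast$ tensors, and $tha^{ux}$ conjugates the $u$-input by the group-like holonomy of the hoop $x$ while inserting the appropriate adjoint-Jacobian correction. The content of this step is to check that $e^\tau$ is an MMA homomorphism onto this target, which amounts to matching each defining operation of $M$ from Section~\ref{subsec:MMAM} to its honest Lie-theoretic counterpart under $\tau_\frakg$: the formula $\bch$ becomes Baker--Campbell--Hausdorff in $\frakg$, the operator $RC_u^\gamma$ becomes conjugation of the $u$-slot by $e^{\gamma}$, and---this is the only nontrivial identification---the wheel functional $J_u$ becomes the logarithm of the determinant, taken in the adjoint representation, of exactly that conjugating coordinate change. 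This last fact is precisely the tensorial meaning of $\diver_u$ through traces in the adjoint representation, and it is the representation-theoretic shadow of the Alekseev--Torossian divergence; granting it, the dictionary closes and $e^\tau$ is a homomorphism.

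Next, by Theorem~\ref{thm:Generators} any MMA homomorphism out of $\Kbhz$ is determined by its values on $t\epsilon_u$, $h\epsilon_x$, and $\rho^{\pm}_{ux}$. The map $\gamma\mapsto e^\tau(\zeta(\gamma))$ is such a homomorphism, since $\zeta$ is one by Theorem~\ref{thm:zeta} and $e^\tau$ is one by the previous step. It therefore remains to prove that $\gamma\mapsto\langle\calO_\gamma\rangle_{\text{BF}}$ is also an MMA homomorphism and then to match the two on generators. The generator match is the transparent, leading-order part: on the unit generators both sides are $1$, while for $\rho^{\pm}_{ux}$ the single BF propagator joining balloon $u$ to hoop $x$ produces, to leading order, the hoop holonomy $e^{\pm u}$, matching $e^\tau$ of $((x\to\pm u);\,0)=\zeta(\rho^{\pm}_{ux})$.

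The hard part will be establishing that $\gamma\mapsto\langle\calO_\gamma\rangle_{\text{BF}}$ is a well-defined MMA homomorphism, which is the genuine quantum-field-theoretic input and where all the analytic difficulty resides. Concretely one must: (i) expand the BF functional integral perturbatively so that the $A$--$B$ propagator supplies the arrows and the bracket vertices supply the internal trivalent vertices, making the expansion naturally $\Abh$-valued as in Section~\ref{subsec:ArrowDiagrams}; (ii) prove, in the spirit of the Cattaneo--Rossi analysis of~\cite{CattaneoRossi:WilsonSurfaces} and of the Axelrod--Singer/Kontsevich configuration-space technology, that the resulting integrals converge, are isotopy invariant, and carry no anomaly obstructing the operations $hm$, $tm$, and especially $tha$; and (iii) check that the one-loop determinant attached to the $tha$ reconnection reproduces the wheel term $J_u$ and not merely the tree-level $RC$. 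Granting (i)--(iii), $\langle\calO\rangle_{\text{BF}}$ is a group-like weight system applied to a \emph{homomorphic} universal finite type invariant of $\Kbhz$, and by the universality and homomorphicity of $\Zbh$ established in Section~\ref{sec:ft} it must coincide with $e^\tau\circ\zeta$. The likeliest residual gap is normalization: reconciling the perturbative framing conventions of BF theory with the framing-independence relation~\eqref{eq:FI} imposed on $\Kbhz$, which must be pinned down for the two homomorphisms to agree on the generators rather than merely up to a self-linking factor.
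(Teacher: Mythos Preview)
The statement you are attempting to prove is labeled in the paper as a \emph{Conjecture}, not a theorem, and the paper provides no proof. In fact, the paper explicitly remarks that ``some interpretation work is required before Conjecture~\ref{conj:BF} even becomes a well-posed mathematical statement,'' and speculates only that Watanabe's work~\cite{Watanabe:CSI} ``most likely'' handles the single-balloon, no-hoop case and ``very likely'' contains the key ideas for the general case. So there is no proof in the paper to compare your attempt against.

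Your proposal is a reasonable high-level strategy---reduce both sides to MMA homomorphisms into a common target and check on generators---and the paper would presumably endorse this shape, since it is exactly how $\zeta$ itself is characterized. But you yourself correctly identify where the proof is absent: your items (i)--(iii), which you explicitly ``grant,'' are the entire content of the conjecture. Establishing that the BF perturbative expansion converges, is isotopy-invariant, is anomaly-free under the MMA operations (especially $tha$), and that its one-loop contribution reproduces $J_u$ rather than some other functional, is not a gap in your argument so much as it \emph{is} the conjecture. What you have written is a plausible roadmap, consistent with the paper's own informal discussion, but not a proof; and the paper does not claim to have one either.
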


Of course, some interpretation work is required before
Conjecture~\ref{conj:BF} even becomes a well-posed mathematical statement.

We note that the Cattaneo-Rossi observable does not depend on the ribbon
property of the KBH $\gamma$. I hesitate to speculate whether this is
an indication that the work presented in this paper can be extended to
non-ribbon knots or an indication that somewhere within the rigorous
mathematical analysis of BF theory an obstruction will arise that will
force one to restrict to ribbon knots (yet I speculate that one of these
possibilities holds true).

Most likely the work of Watanabe~\cite{Watanabe:CSI} is a proof of
Conjecture~\ref{conj:BF} for the case of a single balloon and no hoops, and
very likely it contains all key ideas necessary for a complete proof of
Conjecture~\ref{conj:BF}.

\section{The Simplest Non-Commutative Reduction and an Ultimate Alexander
Invariant} \label{sec:uA}

\subsection{Informal} \label{subsec:Informal}

Let us start with some informal words. All the fundamental operations
within the definition of $M$, namely $[.,.]$, $C_u^\gamma$, $RC_u^\gamma$
and $\diver_u$, act by modifying trees and wheels near their extremities
--- their ``tails'' and their ``heads'' (for wheels, all extremities
are ``tails'').  Thus all operations will remain well-defined and will
continue to satisfy the MMA properties if we extend or reduce trees and
wheels by objects or relations that are confined to their ``inner'' parts.

In this section we discuss the ``$\beta$-quotient of $M$'',
an extension/reduction of $M$ as discussed above, which
is even better-computable than $M$. As we have seen in
Section~\ref{sec:Computations}, objects in $M$, and in particular the
invariant $\zeta$, are machine-computable. Yet the dimensions of $\FL$
and of $\CW$ grow exponentially in the degree, and so does the complexity
of computations in $M$. Objects in the $\beta$-quotient are described in
terms of commutative power series, their dimensions grow polynomially
in the degree, and computations in the $\beta$-quotient are polynomial
time. In fact, the power series appearing with the $\beta$-quotient
can be ``summed'', and {\em non-perturbative} formulae can be given to
everything in sight.

Yet $\zeta^\beta$, meaning $\zeta$ reduced to the $\beta$-quotient, remains
strong enough to contain the (multi-variable) Alexander polynomial. I argue
that in fact, the formulae obtained for the Alexander polynomial within
this $\beta$-calculus are ``better'' than many standard formulae for the
Alexander polynomial.

More on the relationship between the $\beta$-calculus and the Alexander
polynomial (though nothing about its relationship with $M$ and $\zeta$),
is in~\cite{Bar-NatanSelmani:MetaMonoids}.

\parpic[r]{\input{figs/betaRelation.pstex_t}}
Still on the informal level, the $\beta$-quotient arises by allowing a
new type of a ``sink'' vertex $c$ and imposing the $\beta$-relation, shown
on the right, on both trees and wheels. One easily sees that under this
relation, trees can be shaved to single arcs union ``$c$-stubs'', wheels
become unions of $c$-stubs, and $c$-stubs ``commute with everything'':

\[ \input{figs/betaConsequences.pstex_t} \]

Hence $c$-stubs can be taken as generators for a commutative power series
ring $R$ (with one generator $c_u$ for each possible tail label $u$),
$\CW(T)$ becomes a copy of the ring $R$, elements of $\FL(T)$ becomes
column vectors whose entries are in $R$ and whose entries correspond
to the tail label in the remaining arc of a shaved tree, and elements
of $\FL(T)^H$ can be regarded as $T\times H$ matrices with entries
in $R$. Hence in the $\beta$-quotient the MMA $M$ reduces to an MMA
$\{\beta_0(T;H)\}$ whose elements are $T\times H$ matrices of power
series, with yet an additional power series to encode the wheels
part. We will introduce $\beta_0$ more formally below, and then note that
it can be simplified even further (with no further loss of information)
to an MMA $\beta$ whose entries and operations involve rational functions,
rather than power series.

\begin{remark} The $\beta$-relation arose from studying the (unique
non-commutative) 2-dimensional
Lie algebra $\frakg_2:=\FL(\xi_1,\xi_2)/([\xi_1,\xi_2]=\xi_2)$, as in
Section~\ref{subsec:TensorialInterpretation}. Loosely, within $\frakg_2$
the $\beta$-relation is a ``polynomial identity'' in a sense similar to the
``polynomial identities'' of the theory of
PI-rings~\cite{Rowen:PolynomialIdentities}. For a more direct
relationship between this Lie algebra and the Alexander polynomial, see
\web{chic2}.
\end{remark}

\subsection{Less informal} \label{subsec:LessInformal}
For a finite set $T$ let $R=R(T):=\bbQ\llbracket\{c_u\}_{u\in
T}\rrbracket$ denote the ring of power series with commuting
generators $c_u$ corresponding to the elements $u$ of $T$, and
let $L=L(T):=R\otimes\bbQ T$ be the the free $R$-module with
generators $T$. Turn $L$ into a Lie algebra over $R$ by declaring
that $[u,v]=c_uv-c_vu$ for any $u,v\in T$. Let $c\colon L\to R$ be the
$R$-linear extension of $u\mapsto c_u$; namely,
\begin{equation} \label{eq:cgamma}
  \gamma=\sum_u\gamma_uu\in L\mapsto c_\gamma:=\sum_u\gamma_uc_u\in R,
\end{equation}
where the $\gamma_u$'s are coefficients in $R$. Note that with this
definition we have $[\alpha,\beta]=c_\alpha\beta-c_\beta\alpha$ for any
$\alpha,\beta\in L$.  There are obvious surjections $\pi\colon\FL\to L$
and $\pi\colon\CW\to R$ (strictly speaking, the first of those maps has
a small cokernel yet becomes a surjection once the ground ring of its
domain space is extended to $R$).

The following Lemma-Definition may appear scary, yet its proof is nothing
more than high school level algebra, and the messy formulae within it
mostly get renormalized away by the end of this section. Hang on!

\begin{lemmadefinition} \label{lemdef:beta0}
The operations $C_u$, $RC_u$, $\bch$, $\diver_u$,
and $J_u$ descend from $\FL/\CW$ to $L/R$, and, for 
$\alpha,\beta,\gamma\in L$ (with $\gamma=\sum_v\gamma_vv$)
they are given by
\begin{eqnarray}
  v \sslash C_u^{-\gamma} & = & v \sslash RC_u^\gamma\ =\ v
    \qquad\text{for $u\neq v\in T$}, \label{eq:betaonv} \\
  \rho \sslash C_u^{-\gamma} & = & \rho \sslash RC_u^\gamma\ =\ \rho
    \qquad\text{for $\rho\in R$}, \label{eq:betaonrho} \\
  u\sslash C_u^{-\gamma} & = &
    e^{-c_\gamma}\left(
      u+c_u\frac{e^{c_\gamma}-1}{c_\gamma}\gamma
    \right) \label{eq:betaC0} \\
    &=& e^{-c_\gamma}\left(
      \left(1+c_u\gamma_u\frac{e^{c_\gamma}-1}{c_\gamma}\right)u
      +c_u\frac{e^{c_\gamma}-1}{c_\gamma}\sum_{v\neq u}\gamma_v v
    \right), \label{eq:betaC} \\
  u\sslash RC_u^\gamma & = &
    \left(1+c_u\gamma_u\frac{e^{c_\gamma}-1}{c_\gamma}\right)^{-1}
    \left(
      e^{c_\gamma}u
      -c_u\frac{e^{c_\gamma}-1}{c_\gamma}\sum_{v\neq u}\gamma_v v
    \right), \label{eq:betaRC} \\
  \bch(\alpha,\beta) & = &
    \frac{c_\alpha+c_\beta}{e^{c_\alpha+c_\beta}-1}\left(
      \frac{e^{c_\alpha}-1}{c_\alpha}\alpha
      + e^{c_\alpha}\frac{e^{c_\beta}-1}{c_\beta}\beta
    \right), \label{eq:betabch} \\
  \diver_u\gamma & = & c_u\gamma_u, \label{eq:betadiv} \\
  J_u(\gamma) & = & \log\left(
    1+\frac{e^{c_\gamma}-1}{c_\gamma}c_u\gamma_u
  \right). \label{eq:betaJ}
\end{eqnarray}
\end{lemmadefinition}

\begin{proof} (Sketch) Equation~\eqref{eq:betaonv} is obvious ---
$C_u$ or $RC_u$ conjugate or repeatedly conjugate $u$, but not $v$.
Equation~\eqref{eq:betaonrho} is the statement that $C_u$ and $RC_u$ are
$R$-linear, namely that they act on scalars as the identity. Informally
this is the fact that 1-wheels commute with everything, and formally it
follows from the fact that $\pi\colon\FL\to L$ is a well defined morphism
of Lie algebras.

To prove Equation~\eqref{eq:betaC0}, we need to compute $e^{-\ad\gamma}(u)$,
and it is enough to carry this computation out within the 2-dimensional
subspace of $L$ spanned by $u$ and by $\gamma$. Hence the computation is an
exercise in diagonalization --- one needs to diagonalize the $2\times 2$
matrix $\ad(-\gamma)$ in order to exponentiate it. Here are
some details: set $\delta = [-\gamma,u] = c_u\gamma-c_\gamma
u$. Then clearly $\ad(-\gamma)(\delta)=-c_\gamma\delta$, and
hence $e^{-\ad\gamma}(\delta)=e^{-c_\gamma}\delta$. Also note that
$\ad(-\gamma)(\gamma)=0$, and hence $e^{-\ad\gamma}(\gamma)=\gamma$. Thus
\[ u\sslash C_u^{-\gamma}
  = e^{-\ad\gamma}(u)
  = e^{-\ad\gamma}\left(
    -\frac{\delta}{c_\gamma}+\frac{c_u\gamma}{c_\gamma}
  \right)
  = -\frac{e^{-c_\gamma}\delta}{c_\gamma}+\frac{c_u\gamma}{c_\gamma}
  = e^{-c_\gamma}\left(
      u+c_u\frac{e^{c_\gamma}-1}{c_\gamma}\gamma
    \right).
\]
Equation~\eqref{eq:betaC} is simply~\eqref{eq:betaC0} rewritten using
$\gamma=\sum_v\gamma_vv$. To prove Equation~\eqref{eq:betaRC}, take its
right hand side and use Equations~\eqref{eq:betaC} and~\eqref{eq:betaonv}
to get $u$ back again, and hence our formula for $RC_u^{\gamma}$ indeed
inverts the formula already established for $C_u^{-\gamma}$.

Equation~\eqref{eq:betabch} amounts to writing the group law of a
2-dimensional Lie group in terms of its 2-dimensional Lie algebra,
$L_0:=\Span(\alpha,\beta)$, and this is again an exercise in $2\times
2$ matrix algebra, though a slightly harder one. We work in the
adjoint representation of $L_0$ and aim to compare the exponential
of the left hand side of Equation~\eqref{eq:betabch} with the
exponential of its right hand side. If $a$ and $b$ are scalars,
let $e(a,b)$ be the matrix representing $e^{\ad(a\alpha+b\beta)}$
on $L_0$ relative to the basis $(\alpha,\beta)$.  Then using
$[\alpha,\beta]=c_\alpha\beta-c_\beta\alpha$ we find that
$e(a,b)=\exp\begin{pmatrix} bc_\beta & -ac_\beta \\ -bc_\alpha &
ac_\alpha \end{pmatrix}$, and we need to show that $e(1,0)\cdot e(0,1)
= e\left( \frac{c_\alpha+c_\beta}{e^{c_\alpha+c_\beta}-1}
\frac{e^{c_\alpha}-1}{c_\alpha},
\frac{c_\alpha+c_\beta}{e^{c_\alpha+c_\beta}-1} e^{c_\alpha}
\frac{e^{c_\beta}-1}{c_\beta} \right)$. Lazy bums do it as follows:

\dialoginclude{eab}

Equation~\eqref{eq:betadiv} is the fact that $\diver_uu=c_u$, along with
the $R$-linearity of $\diver_u$.

For Equation~\eqref{eq:betaJ}, note that using
Equation~\eqref{eq:betaRC}, the coefficient of $u$ in
$\gamma\sslash RC_u^{s\gamma}$ is $\gamma_u e^{sc_\gamma}
\left(1+c_u\gamma_u\frac{e^{sc_\gamma}-1}{c_\gamma}\right)^{-1}$. Thus using
Equation~\eqref{eq:betadiv} and the fact that $C_u$ acts trivially on $R$,
\begin{multline*} J_u(\gamma)
  = \int_0^1ds\,\diver_u\!\left(\gamma \sslash RC_u^{s\gamma}\right)
    \sslash C_u^{-s\gamma}
  = \int_0^1ds\,
    \left(1+c_u\gamma_u\frac{e^{sc_\gamma}-1}{c_\gamma}\right)^{-1}
    c_u\gamma_u e^{sc_\gamma} \\
  = \left.\log\left(
      1+\frac{e^{sc_\gamma}-1}{c_\gamma}c_u\gamma_u
    \right)\right|_0^1
  = \log\left(1+\frac{e^{c_\gamma}-1}{c_\gamma}c_u\gamma_u\right).
  \qquad\qed
\end{multline*}

\end{proof}

\subsection{The reduced invariant $\zeta^{\beta_0}$.} \label{subsec:beta0}

We now let $\beta_0(T;H)$ be the $\beta$-reduced version of $M(T;H)$.
Namely, in parallel with Section~\ref{subsec:MMAM} we define
\[ \beta_0(T;H) := L(T)^H\times R^r(T) = R(T)^{T\times H}\times R^r(T). \]
In other words, elements of $\beta_0(T;H)$ are $T\times H$ matrices
$A=(A_{ux})$ of power series in the variables $\{c_u\}_{u\in T}$,
along with a single additional power series $\omega\in R^r$ ($R^r$ is $R$
modded out by its degree $1$ piece) corresponding to the
last factor above, which we write at the top left of $A$:
\[ \beta_0(u,v,\ldots;x,y,\ldots) = \left\{
  \left(\begin{array}{c|ccc}
    \omega & x & y & \cdots \\
    \hline
    u & A_{ux} & A_{uy} & \cdot \\
    v & A_{vx} & A_{vy} & \cdot \\
    \vdots & \cdot & \cdot & \ddots
  \end{array}\right)\colon \omega\in R^r(T),\,A_{\cdot\cdot}\in R(T)
  \right\}
\]

Continuing in parallel with Section~\ref{subsec:MMAM} and using the
formulae from Lemma-Definition~\ref{lemdef:beta0}, we turn
$\{\beta_0(T;H)\}$ into an MMA with operations defined as follows (on a
typical element of $\beta_0$, which is a decorated matrix $(A,\omega)$
as above):

\begin{itemize}

\item $t\sigma^u_v$ acts by renaming row $u$ to $v$ and sending the
variable $c_u$ to $c_v$ everywhere. $t\eta^u$ acts by removing row $u$
and sending $c_u$ to $0$. $tm^{uv}_w$ acts by adding row $u$ to row $v$
calling the result row $w$, and by sending $c_u$ and $c_v$ to $c_w$
everywhere.

\item $h\sigma^x_y$ and $h\eta^x$ are clear. To define $hm^{xy}_z$,
let $\alpha=(A_{ux})_{u\in T}$ and $\beta=(A_{uy})_{u\in T}$
denote the columns of $x$ and $y$ in $A$, let $c_\alpha:=\sum_{u\in
T}A_{ux}c_u$ and $c_\beta:=\sum_{u\in T}A_{uy}c_u$ in parallel with
Equation~\eqref{eq:cgamma}, and let $hm^{xy}_z$ act by removing the $x$-
and $y$-columns $\alpha$ and $\beta$ and introducing a new column, labelled
$z$, and containing $\frac{c_\alpha+c_\beta}{e^{c_\alpha+c_\beta}-1}\left(
\frac{e^{c_\alpha}-1}{c_\alpha}\alpha +
e^{c_\alpha}\frac{e^{c_\beta}-1}{c_\beta}\beta \right)$, as in
Equation~\eqref{eq:betabch}.

\parpic[r]{$\displaystyle\begin{array}{c|cccc}
  \omega & x & \cdot & y & \cdot \\
  \hline
  u & \gamma_u & \cdot & \alpha_u & \cdot \\
  \vdots & \gamma_{\text{rest}} & \cdot & \alpha_{\text{rest}} & \cdot
\end{array}$}
\item We now describe the action of $tha^{ux}$ on an input $(A,\omega)$ as
depicted on the right. Let $\gamma=\begin{pmatrix}\gamma_u \\
\gamma_{\text{rest}} \end{pmatrix}$ be the column of $x$, split into the
``row $u$'' part $\gamma_u$ and the rest, $\gamma_{\text{rest}}$. Let
$c_\gamma$ be $\sum_{v\in T}\gamma_vc_v$ as in Equation~\eqref{eq:cgamma}.
Then $tha^{ux}$ acts as follows:
\begin{itemize}
\item As dictated by Equation~\eqref{eq:betaJ}, $\omega$ is replaced by
$\omega+\log\left(1+\frac{e^{c_\gamma}-1}{c_\gamma}c_u\gamma_u\right)$.
\item As dictated by Equations~\eqref{eq:betaonv} and~\eqref{eq:betaRC},
every column $\alpha=\begin{pmatrix}\alpha_u \\ \alpha_{\text{rest}}
\end{pmatrix}$ in $A$ (including the column $\gamma$ itself) is replaced by 
\[ \left(1+c_u\gamma_u\frac{e^{c_\gamma}-1}{c_\gamma}\right)^{-1}
  \begin{pmatrix}
    e^{c_\gamma}\alpha_u \\
    \alpha_{\text{rest}}
      - c_u\frac{e^{c_\gamma}-1}{c_\gamma}(c\gamma)_{\text{rest}}
  \end{pmatrix},
\]
where $(c\gamma)_{\text{rest}}$ is the column whose row $v$ entry is
$c_v\gamma_v$, for any $v\neq u$.
\end{itemize}

\end{itemize}
\begin{itemize}

\item The ``merge'' operation $\ast$ is $\displaystyle
  \begin{array}{c|c}\omega_1&H_1\\\hline T_1&A_1\end{array}
  \,\ast\,
  \begin{array}{c|c}\omega_2&H_2\\\hline T_2&A_2\end{array}
  :=
  \begin{array}{c|cc}
    \omega_1+\omega_2 & H_1 & H_2 \\
    \hline
    T_1 & A_1 & 0 \\
    T_2 & 0 & A_2
  \end{array}$.

\item $t\epsilon_u=\begin{array}{c|c}0&\emptyset\\\hline
u&\emptyset\end{array}$ and $h\epsilon_x=\begin{array}{c|c}0&x\\\hline
\emptyset&\emptyset\end{array}$ (these values correspond to a
matrix with an empty set of columns and a matrix with an empty set of
rows, respectively).

\end{itemize}

We have concocted the definition of the MMA $\beta_0$ so that
the projection $\pi\colon M\to\beta_0$ would be a morphism of
MMAs. Hence to completely compute $\zeta^{\beta_0}:=\pi\circ\zeta$
on any rKBH (to all orders!), it is enough to note its values on the
generators. These are determined by the values in Theorem~\ref{thm:zeta}:
$\zeta^{\beta_0}(\rho^\pm_{ux})= \begin{array}{c|c}0&x\\\hline u&\pm
1\end{array}$.

\subsection{The ultimate Alexander invariant $\zeta^\beta$.}
\label{subsec:Ultimate}

Some repackaging is in order. Noting the ubiquity of factors of the form
$\frac{e^c-1}{c}$ in the previous section, it makes sense to multiply any
column $\alpha$ of the matrix $A$ by $\frac{e^{c_\alpha}-1}{c_\alpha}$.
Noting that row-$u$ entries (things like $\gamma_u$) often appear
multiplied by $c_u$, we multiply every row by its corresponding variable
$c_u$. Doing this and rewriting the formulae of the previous section in
the new variables, we find that the variables $c_u$ only appear within
exponentials of the form $e^{c_u}$. So we set $t_u:=e^{c_u}$ and rewrite
everything in terms of the $t_u$'s. Finally, the only formula that touches
$\omega$ is additive and has a $\log$ term. So we replace $\omega$ with
$e^\omega$. The result is ``$\beta$-calculus'', which was described in
detail in~\cite{Bar-NatanSelmani:MetaMonoids}. A summary version follows.
In these formulae, $\alpha$, $\beta$, $\gamma$, and $\delta$ denote
entries, rows, columns, or submatrices as appropriate, and whenever
$\alpha$ is a column, $\langle\alpha\rangle$ is the sum of is entries:

\[ \beta(T;H) =
  \left\{\left.\begin{array}{c|ccc}
    \omega & x & y & \cdots \\
    \hline
    u & \alpha_{ux} & \alpha_{uy} & \cdot \\
    v & \alpha_{vx} & \alpha_{vy} & \cdot \\
    \vdots & \cdot & \cdot & \cdot
  \end{array}\right|\parbox{2.4in}{
    $\omega$ and the $\alpha_{ux}$'s are rational functions in variables
    $t_u$, one for each $u\in T$. When all $t_u$'s are set to $1$, $\omega$
    is $1$ and every $\alpha_{ux}$ is $0$.
  }\right\},
\]
\[ tm^{uv}_w\colon
    \begin{array}{c|c}
      \omega & H \\
      \hline
      u & \alpha \\
      v & \beta \\
      T & \gamma
    \end{array}
    \mapsto
    \left(\begin{array}{c|c}
      \omega & H \\
      \hline
      w & \alpha+\beta \\
      T & \gamma
    \end{array}\right)\sslash(t_u,t_v\to t_w),
\]
\[ hm^{xy}_z\colon
    \begin{array}{c|ccc}
      \omega & x & y & H \\
      \hline
      T & \alpha & \beta & \gamma
    \end{array}
    \mapsto
    \begin{array}{c|cc}
      \omega & z & H \\
      \hline
      T & \alpha+\beta+\langle\alpha\rangle\beta & \gamma
    \end{array},
\]
\[ tha^{ux}\colon
    \begin{array}{c|cc}
      \omega & x & H \\
      \hline
      u & \alpha & \beta \\
      T & \gamma & \delta
    \end{array}
    \mapsto
    \begin{array}{c|cc}
      \omega(1+\alpha) & x & H \\
      \hline
      u & \alpha(1+\langle\gamma\rangle/(1+\alpha))
        & \beta(1+\langle\gamma\rangle/(1+\alpha)) \\
      T & \gamma/(1+\alpha) & \delta-\gamma\beta/(1+\alpha)
    \end{array},
\]
\[
  \begin{array}{c|c}\omega_1&H_1\\\hline T_1&A_1\end{array}
  \ast
  \begin{array}{c|c}\omega_2&H_2\\\hline T_2&A_2\end{array}
  :=
  \begin{array}{c|cc}
    \omega_1\omega_2 & H_1 & H_2 \\
    \hline
    T_1 & A_1 & 0 \\
    T_2 & 0 & A_2
  \end{array},
\]
\[ \zeta^\beta(t\epsilon_u)=
    \begin{array}{c|c}1&\emptyset\\\hline u&\emptyset\end{array},
  \qquad \zeta^\beta(h\epsilon_x)=
    \begin{array}{c|c}1&x\\\hline \emptyset&\emptyset\end{array},
  \qquad\text{and}\qquad
  \zeta^\beta(\rho^\pm_{ux})=
    \begin{array}{c|c}1&x\\\hline u&t_u^{\pm 1}-1\end{array}.
\]

\begin{theorem} \label{thm:Alexander} If $K$ is a u-knot regarded as a
1-component pure tangle by cutting it open, then the $\omega$ part of
$\zeta^\beta(\delta(K))$ is the Alexander polynomial of $K$.
\end{theorem}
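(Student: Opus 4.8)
The plan is to exhibit $\zeta^\beta(\delta(K))$ as the output of exactly the $\beta$-calculus computation of the Alexander polynomial recorded in~\cite{Bar-NatanSelmani:MetaMonoids}, and thereby reduce the claim to the properties of that computation. First I would present $K$, cut open into a long knot, as $\delta$ applied to a u-tangle which is the disjoint union of its $n$ crossings, followed by the sequence of ``diagonal'' multiplications $dm^{ab}_c=tha^{ab}\sslash tm^{ab}_c\sslash hm^{ab}_c$ that sew the $2n$ strands arising from the $n$ crossings into a single strand, exactly as in the demo run of Section~\ref{subsec:Demo1}. Since $\zeta^\beta$ is a morphism of MMAs (Section~\ref{subsec:beta0}), its value is obtained by merging the generator values $\zeta^\beta(\rho^\pm_{ux})=\begin{array}{c|c}1 & x\\\hline u & t_u^{\pm1}-1\end{array}$ and then applying the $\beta$-calculus $dm$ operation repeatedly; because $K$ has a single component, the final object lives in $\beta(u;x)$ with one tail and one head, and all the variables $t_u$ have been identified to a single $t$.

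The conceptual content of the calculation is that the sewing operations perform Gaussian elimination. Inspecting the $tha$ rule, the transformation $\delta\mapsto\delta-\gamma\beta/(1+\alpha)$ on the lower-right submatrix is precisely a Schur-complement (pivot) step, the rescaling $\gamma\mapsto\gamma/(1+\alpha)$ and the head-scaling hidden in $hm$ are the accompanying normalizations, and the scalar rule $\omega\mapsto\omega(1+\alpha)$ accumulates the product of the pivots. Thus, starting from the block matrix assembled from the crossings --- whose blocks carry the entries $t^{\pm1}-1$ coming from $\rho^\pm$ --- the repeated $dm$'s eliminate all the auxiliary strands and leave $\omega$ equal to the determinant of the resulting reduced matrix, which is a presentation matrix of the Alexander module of $K$. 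This is exactly the transfer-matrix form of the Alexander polynomial of~\cite{Bar-NatanSelmani:MetaMonoids}.

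To make this a proof I would argue by homomorphicity rather than by grinding through the elimination. The composite $\delta\sslash\zeta^\beta\colon\wT\to\beta$ is a meta-monoid morphism (it is the composition of the meta-monoid homomorphism $\delta\colon\wT\to\Kbhd$ with the MMA morphism $\zeta^\beta$, using that $dm$ is built from the MMA operations of $\Kbhz$), and so is the Alexander invariant of~\cite{Bar-NatanSelmani:MetaMonoids}; moreover both land in the same $\beta$-calculus. Hence it suffices to check that they agree on the generators of $\wT$ --- the positive and negative crossings --- and that they intertwine the single generating operation $m^{ab}_c$. Agreement on crossings is the computation $\zeta^\beta(\delta(\overcrossing))$ and $\zeta^\beta(\delta(\undercrossing))$ from the proof of Theorem~\ref{thm:Generators} together with the stated generator values; compatibility with $m^{ab}_c$ is the identity $m^{ab}_c\sslash\delta=\delta\sslash dm^{ab}_c$ combined with the fact that $\zeta^\beta$ respects $dm$. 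Finally I would fix the normalization: the Alexander polynomial is only defined up to $\pm t^k$, so I must verify that $\zeta^\beta$ produces the symmetric (Conway-normalized) representative, which follows from the symmetry $\rho^+\leftrightarrow\rho^-$ under $t\mapsto t^{-1}$ and the value $\omega=1$ on the unknot.

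The main obstacle is this last point together with making the ``Gaussian elimination equals Alexander-module presentation'' identification rigorous: one must track carefully that the matrix assembled by the $\beta$-calculus operations is genuinely a presentation matrix of the Alexander module (equivalently, that it agrees with a Fox-calculus or reduced-Burau matrix of a diagram of $K$) and that the accumulated scalar $\omega$ is its determinant in the correct normalization, rather than differing by a unit. I expect this bookkeeping --- not any deep topology --- to be where the real work lies, and I would handle it by induction on the number of sewing steps, maintaining as an invariant that the current decorated matrix is the peripheral/Alexander data of the partially sewn tangle.
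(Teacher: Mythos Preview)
Your proposal is more than the paper itself supplies. The paper does not give a self-contained proof of this theorem at all: it merely lists three ``winding paths'' through external references (via~\cite{WKO} together with~\cite{Lee:AlexanderInvariant}, via~\cite{WKO} together with the Kontsevich-integral/Alexander folklore of~\cite{Kricker:Kontsevich}, or via the Gassner-type formulae of~\cite{KirkLivingstonWang:Gassner}), and then frankly says that the strongest evidence is extensive experimental verification, pointing to~\cite{Bar-NatanSelmani:MetaMonoids}. So there is no ``paper's own proof'' to match against.

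Your strategy---reduce by homomorphicity to the generators, then identify the $\beta$-calculus computation with the one in~\cite{Bar-NatanSelmani:MetaMonoids}---is essentially the third of the paper's suggested paths, and is the natural one given the machinery developed here. The reduction step is sound: $\delta\sslash\zeta^\beta$ is a meta-monoid morphism by construction, and if the Alexander invariant of~\cite{Bar-NatanSelmani:MetaMonoids} is also such a morphism into the same target with the same operations, agreement on crossings forces agreement everywhere. Your Schur-complement reading of $tha$ is correct and illuminating.

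You have also correctly located the only real gap: the identification of the $\beta$-calculus matrix with a genuine presentation matrix of the Alexander module (or with reduced Burau / Fox calculus), together with the normalization. This is precisely the content that the paper outsources to~\cite{Bar-NatanSelmani:MetaMonoids} and~\cite{KirkLivingstonWang:Gassner}. Your proposed inductive bookkeeping---maintaining that the current decorated matrix encodes the peripheral/Alexander data of the partially-sewn tangle---is the right shape of argument, but carrying it out is exactly what those references do, so in practice you would be reproving their results. If your goal is a proof internal to this paper, that work is unavoidable; if you are willing to cite, then your argument is already as complete as the paper's, and arguably more so, since you have spelled out why the homomorphicity reduction works.
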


I know of three winding paths that constitute a proof of the above theorem:
\begin{itemize}
\item Use the results of Section~\ref{sec:ft} here, of~\cite[Section~3.7]{WKO},
and of~\cite{Lee:AlexanderInvariant}.
\item Use the results of Section~\ref{sec:ft} here,
of~\cite[Section~3.9]{WKO}, and the known relation of the Alexander
polynomial with the wheels part of the Kontsevich integral
(e.g.~\cite{Kricker:Kontsevich}).
\item Use the results of~\cite{KirkLivingstonWang:Gassner}, where formulae
very similar to ours appear.
\end{itemize}
Yet to me, the strongest evidence that Theorem~\ref{thm:Alexander}
is true is that it was verified explicitly on very many knots --- see
the single example in Section~\ref{subsec:Demo1} here and many more
in~\cite{Bar-NatanSelmani:MetaMonoids}.

In several senses, $\zeta^\beta$ is an ``ultimate'' Alexander invariant:
\begin{itemize}

\item The formulae in this section may appear complicated, yet note that
if an rKBH consists of about $n$ balloons and hoops, its invariant is
described in terms of only $O(n^2)$ polynomials and each of the operations
$tm$, $hm$ and $tha$ involves only $O(n^2)$ operations on polynomials.

\item It is defined for tangles and has a prescribed behaviour under
tangle compositions (in fact, it is defined in terms of that prescribed
behaviour). This means that when $\zeta^\beta$ is computed on some large
knot with (say) $n$ crossings, the computation can be broken up into $n$
steps of complexity $O(n^2)$ at the end of each the quantity computed is
the invariant of some topological object (a tangle), or even into $3n$
steps at the end of each the quantity computed is the invariant of some
rKBH\footnote{A similar statement can be made for Alexander formulae based
on the Burau representation. Yet note that such formulae still end with
a computation of a determinant which may take $O(n^3)$ steps. Note also
that the presentation of knots as braid closures is typically inefficient
--- typically a braid with $O(n^2)$ crossings is necessary in order to
present a knot with just $n$ crossings.}.

\item $\zeta^\beta$ contains also the multivariable Alexander polynomial
and the Burau representation (overwhelmingly verified by experiment, not
written-up yet).

\item $\zeta^\beta$ has an easily prescribed behaviour under hoop- and
balloon-doubling, and $\zeta^\beta\circ\delta$ has an easily prescribed
behaviour under strand-doubling (not shown here).

\end{itemize}

\section{Odds and Ends} \label{sec:odds}

\subsection{Linking Numbers and Signs} \label{subsec:Hopf} If $x$ is
an oriented $S^1$ and $u$ is an oriented $S^2$ in an oriented $S^4$
(or $\bbR^4$) and the two are disjoint, their linking number $l_{ux}$
is defined as follows. Pick a ball $B$ whose oriented boundary is
$u$ (using the ``outward pointing normal'' convention for orienting
boundaries), and which intersects $x$ in finitely many transversal
intersection points $p_i$. At any of these intersection points $p_i$,
the concatenation of the orientation of $B$ at $p_i$ (thought of a basis
to the tangent space of $B$ at $p_i$) with the tangent to $x$ at $p_i$ is
a basis of the tangent space of $S^4$ at $p_i$, and as such it may either
be positively oriented or negatively oriented.  Define $\sigma(p_i)=+1$
in the former case and $\sigma(p_i)=-1$ in the latter case. Finally,
let $l_{ux}:=\sum_i\sigma(p_i)$. It is a standard fact that $l_{ux}$
is an isotopy invariant of $(u,x)$.

\begin{exercise} Verify that $l_{ux}(\rho^\pm_{ux})=\pm 1$, where
$\rho^+_{ux}$ and $\rho^-_{ux}$ are the positive and negative Hopf links as
in Example~\ref{exa:generators}. For the purpose of this exercise the plane
in which Figure~\ref{fig:Generators} is drawn is oriented counterclockwise,
the 3D space it represents has its third coordinate oriented ``up'' from
the plane of the paper, and $\bbR^4_{txyz}$ is oriented so that the $t$
coordinate is ``first''.
\end{exercise}

\parpic[r]{\includegraphics[width=1.5in]{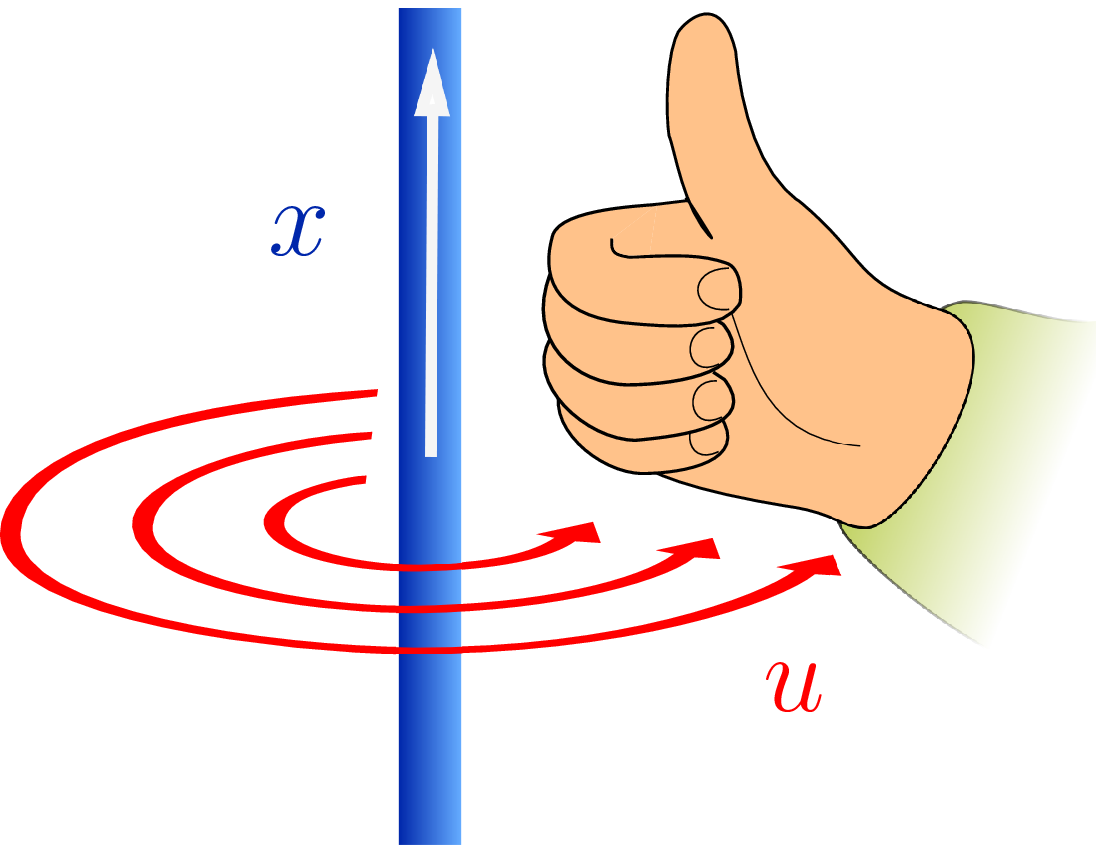}}
An efficient thumb rule for deciding the linking-number signs for
a balloon $u$ and a hoop $x$ presented using our standard notation as in
Section~\ref{subsec:kbh} is the ``right-hand rule'' of the figure on
the right, shown here without further
explanation.  The lovely figure is adopted from
[\href{http://en.wikipedia.org/wiki/Right-hand_rule}{Wikipedia:
Right-hand rule}].

\subsection{A topological construction of $\delta$}
\label{subsec:topdelta}

The map $\delta$ is a composition $\delta_0\sslash\Upsilon$ (``$\delta_0$
followed by $\Upsilon$'', aka $\Upsilon\circ\delta_0$. See ``notational
conventions'', Section~\ref{subsec:Conventions}.). Here $\delta_0$
is the standard ``tubing'' map $\delta_0$ (called ``Tube'' in
Satoh's~\cite{Satoh:RibbonTorusKnots}), though with the tubes decorated by
an additional arrowhead to retain orientation information.
The map $\Upsilon$ caps and
strings both ends of all tubes to $\infty$ and then uses, at the level
of embeddings, the fact that a pinched torus is homotopy equivalent to
a sphere wedge a circle:
\[ \input{figs/deltaZero.pstex_t}\qquad\input{figs/Upsilon.pstex_t} \]

It is worthwhile to give a completely ``topological'' definition of
the tubing map $\delta_0$, thus giving $\delta=\delta_0\sslash\Upsilon$
a topological interpretation. We must start with a topological
interpretation of v-tangles, and even before, with v-knots, also known as
virtual knots.

The standard topological interpretation of v-knots
(e.g.~\cite{Kuperberg:VirtualLink}) is that they are oriented knots
drawn\footnote{Here and below, ``drawn on $\Sigma$'' means ``embedded in
$\Sigma\times[-\epsilon,\epsilon]$''.} on an oriented surface $\Sigma$,
modulo ``stabilization'', which is the addition and/or removal of empty
handles (handles that do not intersect with the knot). We prefer an
equivalent, yet even more bare-bones approach. For us, a virtual knot is an
oriented knot $\gamma$ drawn on a ``virtual surface $\Sigma$ for
$\gamma$''. More precisely, $\Sigma$ is an oriented surface that may have
a boundary, $\gamma$ is drawn on $\Sigma$, and the pair $(\Sigma,\gamma)$
is taken modulo the following relations:
\begin{itemize}
\item Isotopies of $\gamma$ on $\Sigma$ (meaning, in
  $\Sigma\times[-\epsilon,\epsilon]$).
\item Tearing and puncturing parts of $\Sigma$ away from $\gamma$:
\end{itemize}
\[ \input{figs/TearingAndPuncturing.pstex_t} \]
(We call $\Sigma$ a ``virtual surface'' because tearing and puncturing
imply that we only care about it in the immediate vicinity of $\gamma$).

We can now define\footnote{Following a private discussion with Dylan
Thurston.} a map $\delta_0$, defined on v-knots and taking values
in ribbon tori in $\bbR^4$: given $(\Sigma,\gamma)$, embed $\Sigma$
arbitrarily in $\bbR^3_{xyz}\subset\bbR^4$. Note that the unit normal
bundle of $\Sigma$ in $\bbR^4$ is a trivial circle bundle and it has a
distinguished trivialization, constructed using its positive-$t$-direction
section and the orientation that gives each fibre a linking number
$+1$ with the base $\Sigma$.  We say that a normal vector to $\Sigma$
in $\bbR^4$ is ``near unit'' if its norm is between $1-\epsilon$ and
$1+\epsilon$. The near-unit normal bundle of $\Sigma$ has as fibre
an annulus that can be identified with $[-\epsilon,\epsilon]\times
S^1$ (identifying the radial direction $[1-\epsilon,1+\epsilon]$
with $[-\epsilon,\epsilon]$ in an orientation-preserving manner), and
hence the near-unit normal bundle of $\Sigma$ defines an embedding
of $\Sigma\times[-\epsilon,\epsilon]\times S^1$ into $\bbR^4$. On the
other hand, $\gamma$ is embedded in $\Sigma\times[-\epsilon,\epsilon]$ so
$\gamma\times S^1$ is embedded in $\Sigma\times[-\epsilon,\epsilon]\times
S^1$, and we can let $\delta_0(\Sigma,\gamma)$ be the composition
\[ \gamma\times S^1
  \hookrightarrow\Sigma\times[-\epsilon,\epsilon]\times S^1
  \hookrightarrow\bbR^4,
\]
which is a torus in $\bbR^4$, oriented using the given orientation of
$\gamma$ and the standard orientation of $S^1$.

We leave it to the reader to verify that $\delta_0(\Sigma,\gamma)$
is ribbon, that it is independent of the choices made within its
construction, that it is invariant under isotopies of $\gamma$ and under
tearing and puncturing, that it is also invariant under the ``overcrossing
commute'' relation of Figure~\ref{fig:OCUC}, and that it is equivalent
to Satoh's tubing map.

The map $\delta_0$ has straightforward generalizations to v-links,
v-tangles, framed-v-links, v-knotted-graphs, etc.

\subsection{Monoids, Meta-Monoids, Monoid-Actions and Meta-Monoid-Actions} 
\label{subsec:meta-naming}
How do we think about meta-monoid-actions? Why that name? Let us start with
ordinary monoids.

\subsubsection{Monoids} \label{subsubsec:monoids}
A monoid\footnote{A monoid is a group sans inverses. You lose nothing if
you think ``group'' whenever the discussion below states ``monoid''.}
$G$ gives rise to a slew of spaces and maps between them: the spaces would
be the spaces of sequences $G^n=\{(g_1,\dots,g_n)\colon g_i\in G\}$,
and the maps will be the maps ``that can be written using the monoid
structure'' --- they will include, for example, the map $m^{ij}_i\colon
G^n\to G^{n-1}$ defined as ``store the product $g_ig_j$ as entry number
$i$ in $G^{n-1}$ while erasing the original entries number $i$ and $j$
and re-numbering all other entries as appropriate''. In addition, there
is also an obvious binary ``concatenation'' map $\ast\colon G^n\times
G^m\to G^{n+m}$ and a special element $\epsilon\in G^1$ (the monoid unit).

Equivalently but switching from ``numbered registers'' to ``named
registers'', a monoid $G$ automatically gives rise to another slew of
spaces and operations. The spaces are $G^X=\{f\colon X\to G\}=\{(x\to
g_x)_{x\in X}\}$ of functions from a finite set $X$ to $G$, or as we
prefer to say it, of $X$-indexed sequences of elements in $G$, or how
computer scientists may say it, of associative arrays of elements of $G$
with keys in $X$. The maps between such spaces would now be the obvious
``register multiplication maps'' $m^{xy}_z\colon G^{X\cup\{x,y\}}\to
G^{X\cup\{z\}}$ (defined whenever $x,y,z\not\in X$ and $x\neq y$),
and also the obvious ``delete a register'' map $\eta^x\colon G^X\to
G^{X\remove x}$, the obvious ``rename a register'' map $\sigma^x_y\colon
G^{X\cup\{x\}}\to G^{X\cup\{y\}}$, and an obvious $\ast\colon G^X\times
G^Y\to G^{X\cup Y}$, defined whenever $X$ and $Y$ are disjoint. Also,
there are special elements, ``units'', $\epsilon_x\in G^{\{x\}}$.

This collection of spaces and maps between them (and the units) satisfies
some properties. Let us highlight and briefly discuss two of those:

\begin{enumerate}
\item The ``associativity property'': For any $\Omega\in G^X$,
\begin{equation} \label{eq:assoc} \Omega\sslash m^{xy}_x\sslash m^{xz}_x
  = \Omega\sslash m^{yz}_y\sslash m^{xy}_x.
\end{equation}
This property is an immediate consequence of the associativity axiom of
monoid theory. Note that it is a ``linear property'' --- its subject,
$\Omega$, appears just once on each side of the equality. Similar linear
properties include
$\Omega\sslash\sigma^x_y\sslash\sigma^y_z=\Omega\sslash\sigma^x_z$,
$\Omega\sslash m^{xy}_z\sslash\sigma^z_u=\Omega\sslash m^{xy}_u$, etc., and
there are also ``multi-linear'' properties like
$(\Omega_1\ast\Omega_2)\ast\Omega_3=\Omega_1\ast(\Omega_2\ast\Omega_3)$,
which are ``linear'' in each of their inputs.

\item If $\Omega\in G^{\{x,y\}}$, then 
\begin{equation} \label{eq:quadratic}
  \Omega=(\Omega\sslash\eta^y)\ast(\Omega\sslash\eta^x)
\end{equation}
(indeed, if $\Omega=(x\to g_x,\,y\to g_y)$, then
$\Omega\sslash\eta^y=(x\to g_x)$ and $\Omega\sslash\eta^x=(y\to g_y)$
and so the right hand side is $(x\to g_x)\ast(y\to g_y)$, which is
$\Omega$ back again), so an element of $G^{\{x,y\}}$ can be factored as an
element of $G^{\{x\}}$ times an element of $G^{\{y\}}$. Note that $\Omega$
appears twice in the right hand side of this property, so this property is
``quadratic''. In order to write this property one must be able to
``make two copies of $\Omega$''.
\end{enumerate}

\subsubsection{Meta-Monoids} \label{subsubsec:MetaMonoids}

\begin{definition} A meta-monoid is a collection $(G_X, m^{xy}_z, \sigma^x_z,
\eta^x, \ast)$ of sets $G_X$, one for each finite set $X$ ``of labels'',
and maps between them $m^{xy}_z$, $\sigma^x_z$, $\eta^x$, $\ast$ with the
same domains and ranges as above, and special elements $\epsilon_x\in
G_{\{x\}}$, and with the same {\bf linear and
multi-linear} properties as above.
\end{definition}

Very crucially, we do not insist on the non-linear
property~\eqref{eq:quadratic} of above, and so we may not have the
factorization $G_{\{x,y\}}=G_{\{x\}}\times G_{\{y\}}$, and in general
it need not be the case that $G_X=G^X$ for some monoid $G$. (Though of
course, the case $G_X=G^X$ is an example of a meta-monoid, which perhaps
may be called a ``classical meta-monoid'').

Thus a meta-monoid is like a monoid in that it has sets $G_X$ of
``multi-elements'' on which almost-ordinary monoid theoretic operations
are defined. Yet the multi-elements in $G_X$ need not simply be lists
of elements as in $G^X$, and instead they may be somehow ``entangled''. A
relatively simple example of a meta-monoid which isn't a monoid is
$H^{\otimes X}$ where $H$ is a Hopf algebra\footnote{Or merely an
algebra.}. This simple example is similar to ``quantum entanglement''. But
a meta-monoid is not limited to the kind of entanglement that appears in
tensor powers. Indeed many of the examples within the main text of this
paper aren't tensor powers and their ``entanglement'' is closer to that
of the theory of tangles. This especially applied to the meta-monoid $\wT$
of Section~\ref{subsec:WisMG}.

\subsubsection{Monoid-Actions} A monoid-action\footnote{Think
``group-action''.} of a monoid $G_1$ on another monoid $G_2$ is a
single algebraic structure MA consisting of two sets $G_1$ (``heads'')
and $G_2$ (``tails''), a binary operation defined on $G_1$, a binary
operation defined on $G_2$, and a mixed operation $G_1\times G_2\to G_2$
(denoted $(x,u)\mapsto u^x)$ which satisfy some well known axioms,
of which the most interesting are the associativities of the first
two binary operations and the two action axioms $(uv)^x=u^xv^x$ and
$u^{(xy)}=(u^x)^y$.

As in the case of individual monoids, a monoid-action MA gives
rise to a slew of spaces and maps between them. The spaces
are $\operatorname{MA}(T;H):=G_2^T\times G_1^H$, defined
whenever $T$ and $H$ are finite sets of ``tail labels'' and
``head labels''. The main operations\footnote{There are also
$\ast$, $t\eta^u$, $h\eta^x$, $t\sigma^u_v$, and $h\sigma^x_y$ and units
$t\epsilon_u$ and $h\epsilon_x$  as before.} are $tm^{uv}_w\colon
\operatorname{MA}(T\cup\{u,v\};H)\to\operatorname{MA}(T\cup\{w\};H)$
defined using the multiplication in $G_2$ (assuming $u,v,w\not\in T$
and $u\neq v$), $hm^{xy}_z\colon \operatorname{MA}(T;H\cup\{x,y\})
\to \operatorname{MA}(T;H\cup\{z\})$ (assuming $x,y\not\in H$
and $x\neq y$) defined using the multiplication in $G_1$, and
$tha^{ux}\colon\operatorname{MA}(T;H)\to\operatorname{MA}(T;H)$
(assuming $x\in H$ and $u\in T$) defined using the action of $G_1$ on
$G_2$. These operations have the following properties, corresponding
to the associativity of $G_1$ and $G_2$ and to the two action axioms of
the previous paragraph:
\begin{equation}\label{eq:MainMMAProperties}\begin{split}
  hm^{xy}_x\sslash hm^{xz}_x=hm^{yz}_y\sslash hm^{xy}_x,
  \quad&\quad
  tm^{uv}_u\sslash tm^{uw}_u=tm^{vw}_v\sslash tm^{uv}_u, \\
  tm^{uv}_w\sslash tha^{wx}=tha^{ux}\sslash tha^{vx}\sslash tm^{uv}_w,
  \quad&\quad
  hm^{xy}_z\sslash tha^{uz}=tha^{ux}\sslash tha^{uy}\sslash hm^{xy}_z.
\end{split}\end{equation}
There are also routine properties involving also $\ast$, $\eta$'s and
$\sigma$'s as before.

\subsubsection{Meta-Monoid-Actions} \label{sssec:MMA} Finally,
a meta-monoid-action is to a monoid-action like a meta-monoid is to
a monoid. Thus it is a collection
\[ (M(T;H), tm^{uv}_w, hm^{xy}_z, tha^{ux}, t\sigma^u_w, h\sigma^x_y,
  t\eta^u, h\eta^x, \ast, t\epsilon_u, h\epsilon_x)
\]
of sets $M(T;H)$, one for each pair of finite sets $(T;H)$ of ``tail
labels'' and ``head labels'', and maps between them $tm^{uv}_w$,
$hm^{xy}_z$, $tha^{ux}$, $t\sigma^u_v$, $h\sigma^x_y$, $t\eta^u$,
$h\eta^x$, $\ast$, and units $t\epsilon_u$ and $h\epsilon_x$, with the
same domains and ranges as above and with the same {\bf linear and
multi-linear} properties as above; most importantly, the properties
in~\eqref{eq:MainMMAProperties}.

Thus a meta-monoid-action is like a monoid-action in that it has sets
$M(T;H)$ of ``multi-elements'' on which almost-ordinary monoid theoretic
operations are defined. Yet the multi-elements in $M(T;H)$ need not
simply be lists of elements as in $G_2^T\times G_1^H$, and instead they
may be somehow ``entangled''. 

\subsubsection{Meta-Groups / Meta-Hopf-Algebras} \label{ssec:MetaHopf}
Clearly, the prefix ``meta'' can be added to many other types of
algebraic structures, though sometimes a little care must be taken. To
define a ``meta-group'', for example, one may add to the definition
of a meta-monoid in Section~\ref{subsubsec:MetaMonoids} a further
collection of operations $S^x$, one for each $x\in X$, representing
``invert the (meta-)element in register $x$''. Except that the axiom
for an inverse, $g\cdot g^{-1}=\epsilon$, is ``quadratic'' in $g$
--- one must have two copies of $g$ in order to write the axiom,
and hence it cannot be written using $S^x$ and the operations in
Section~\ref{subsubsec:MetaMonoids}. Thus in order to define a
meta-group, we need to also include ``meta-co-product'' operations
$\Delta^x_{yz}\colon G_{X\cup\{x\}}\to G_{X\cup\{y,z\}}$. These operations
should satisfy some further axioms, much like within the definition
of a Hopf algebra. The major ones are: a meta-co-associativity, a
meta-compatibility with the meta-multiplication, and a meta-inverse
axiom $\Omega \sslash \Delta^x_{yz} \sslash S^y \sslash m^{yz}_x =
(\Omega\sslash\eta^x)\ast\epsilon_x$.

A strict analogy with groups would suggest another axiom: a
meta-co-commutativity of $\Delta$, namely $\Delta^x_{yz}=\Delta^x_{zy}$.
Yet experience shows that it is better to sometimes not insist on
meta-co-commutativity. Perhaps the name ``meta-group'' should be used when
meta-co-commutativity is assumed, and ``meta-Hopf-algebra'' when it isn't.

Similarly one may extend ``meta-monoid-actions'' to ``meta-group-actions''
and/or ``meta-Hopf-actions'', in which new operations $t\Delta$ and
$h\Delta$ are introduced, with appropriate axioms.

Note that $\vT$ and $\wT$ have a meta-co-product, defined using ``strand
doubling''. It is not meta-co-commutative.

Note also that $\Kbh$ and $\Kbhz$ have operations $h\Delta$ and $t\Delta$,
defined using ``hoop doubling'' and ``balloon doubling''. The former is
meta-co-commutative while the latter is not.

Note also that $M$ and $M_0$ have have an operation $h\Delta^x_{yz}$
defined by cloning one Lie-word, and an operation $t\Delta^u_{vw}$ defined
using the substitution $u\to v+w$. Both of these operations are
meta-co-commutative.

Thus $\zeta_0$ and $\zeta$ cannot be homomorphic with respect to $t\Delta$.
The discussion of trivalent vertices in~\cite[Section~6]{WKO} can be
interpreted as an analysis of the failure of $\zeta$ to be homomorphic with
respect to $t\Delta$, but this will not be attempted in this paper.

\subsection{Some Differentials and the Proof of
  Proposition~\ref{prop:JProperties}} \label{subsec:JProperties}

We prove Proposition~\ref{prop:JProperties}, namely
Equations~\eqref{eq:JhProperty} through~\eqref{eq:JtProperty}, by
verifying that each of these equations holds at one point, and then
by differentiating each side of each equation and showing that the
derivatives are equal. While routine, this argument appears complicated
because the spaces involved are infinite dimensional and the operations
involved are non-commutative. In fact, even the well-known derivative
of the exponential function, which appears in the definition of $C_u$
which appears in the definitions of $RC_u$ and of $J_u$, may surprise
readers who are used to the commutative case $de^x=e^xdx$.

Recall that $\FA$ denotes the graded completion of the free
associative algebra on some alphabet $T$, and that the exponential
map $\exp\colon\FL\to\FA$ defined by $\gamma\mapsto\exp(\gamma) =
e^\gamma:=\sum_{k=0}^\infty\frac{\gamma^k}{k!}$ makes sense in this
completion.

\begin{lemma} \label{lem:dexp}
If $\delta\gamma$ denotes an infinitesimal variation of $\gamma$, then
the infinitesimal variation $\delta e^\gamma$ of $e^\gamma$ is given
as follows:
\begin{equation} \label{eq:dexp}
 \delta e^\gamma= e^\gamma\cdot\left(
    \delta\gamma\sslash\frac{1-e^{-\ad\gamma}}{\ad\gamma}
  \right)
  = \left(\delta\gamma\sslash\frac{e^{\ad\gamma}-1}{\ad\gamma}\right)
    \cdot e^\gamma.
\end{equation}
\end{lemma}

Above expressions such as $\frac{e^{\ad\gamma}-1}{\ad\gamma}$
are interpreted via their power series
expansions, $\frac{e^{\ad\gamma}-1}{\ad\gamma} = 1 +
\frac12\ad\gamma + \frac16(\ad\gamma)^2 + \dots$, and hence
$\delta\gamma\sslash\frac{e^{\ad\gamma}-1}{\ad\gamma} = \delta\gamma +
\frac12[\gamma,\delta\gamma] + \frac16[\gamma,[\gamma,\delta\gamma]]
+ \dots$. Also, the precise meaning of~\eqref{eq:dexp}
is that for any $\delta\gamma\in\FL$, the derivative
$\delta e^\gamma := \lim_{\epsilon\to 0} \frac{1}{\epsilon}
\left(e^{\gamma+\epsilon\delta\gamma}-e^\gamma\right)$ is given by the
right-hand-side of that equation. Equivalently, $\delta e^\gamma$ is
the term proportional to $\delta\gamma$ in $e^{\gamma+\delta\gamma}$,
where during calculations we may assume that ``$\delta\gamma$ is
an infinitesimal'', meaning that anything quadratic or higher in
$\delta\gamma$ can be regarded as equal to $0$.

Lemma~\ref{lem:dexp} is rather standard
(e.g.~\cite[Section~1.5]{DuistermaatKolk:LieGroups},
\cite[Section~7]{Meinrenken:LieNotes}). Here's a tweet:

\vskip 3mm

\noindent{\em Proof of Lemma~\ref{lem:dexp}.} With
an infinitesimal $\delta\gamma$, consider $F(s) :=
e^{-s\gamma}e^{s(\gamma+\delta\gamma)} - 1$. Then $F(0)=0$ and
$\frac{d}{ds}F(s) = e^{-s\gamma}(-\gamma)e^{s(\gamma+\delta\gamma)}
+ e^{-s\gamma}(\gamma+\delta\gamma)e^{s(\gamma+\delta\gamma)}
= e^{-s\gamma}\delta\gamma e^{s(\gamma+\delta\gamma)} =
e^{-s\gamma}\delta\gamma e^{s\gamma} = \delta\gamma\sslash
e^{-s\ad\gamma}$. So $e^{-\gamma}\delta\gamma = F(1) = \int_0^1
ds\frac{d}{ds}F(s) = \delta\gamma\sslash \int_0^1 ds\, e^{-s\ad\gamma} =
\delta\gamma\sslash\frac{1-e^{-\ad\gamma}}{\ad\gamma}$. The second part
of~\eqref{eq:dexp} is proven in a similar manner, starting with $G(s)
:= e^{s(\gamma+\delta\gamma)}e^{-s\gamma} - 1$. \qed

\begin{lemma} \label{lem:dbch}
If $\gamma=\bch(\alpha,\beta)$ and $\delta\alpha$, $\delta\beta$, and
$\delta\gamma$ are infinitesimals related by
$\gamma+\delta\gamma=\bch(\alpha+\delta\alpha, \beta+\delta\beta)$, then
\begin{equation} \label{eq:dbch}
  \delta\gamma\sslash\frac{1-e^{-\ad\gamma}}{\ad\gamma}
  = \left(\delta\alpha\sslash\frac{1-e^{-\ad\alpha}}{\ad\alpha}\sslash
    e^{-\ad\beta}\right)
  + \left(\delta\beta\sslash\frac{1-e^{-\ad\beta}}{\ad\beta}\right)
\end{equation}
\end{lemma}

\begin{proof} Use Leibniz' law on $e^\gamma = e^\alpha e^\beta$ to get
$\delta e^\gamma = (\delta e^\alpha)e^\beta + e^\alpha(\delta e^\beta)$.
Now use Lemma~\ref{lem:dexp} three times to get
\[
  e^\gamma\left(\gamma\sslash\frac{1-e^{-\ad\gamma}}{\ad\gamma}\right)
  = e^\alpha\left(
      \delta\alpha\sslash\frac{1-e^{-\ad\alpha}}{\ad\alpha}
    \right)e^\beta
  + e^\alpha e^\beta\left(
      \delta\beta\sslash\frac{1-e^{-\ad\beta}}{\ad\beta}
    \right),
\]
conjugate the $e^\beta$ in the first summand to the other side of the
parenthesis, and cancel $e^\gamma = e^\alpha e^\beta$ from both sides of the
resulting equation. \qed
\end{proof}

Recall that $C_u^\gamma$ and $RC_u^\gamma$ are automorphisms of
$\FL$. We wish to study their variations $\delta C_u^\gamma$ and
$\delta RC_u^\gamma$ with respect to $\gamma$ (these variations are
``infinitesimal'' automorphisms of $\FL$). We need a definition and a
property first.

\begin{definition} \label{def:adu} For $u\in T$ and $\gamma\in\FL(T)$ let
$\ad_u\{\gamma\}=\ad_u^\gamma\colon\FL(T)\to\FL(T)$ denote the derivation
of $\FL(T)$ defined by its action of the generators as follows:
\[ v\sslash\ad_u\{\gamma\} = v\sslash\ad_u^\gamma
  := \begin{cases}
    [\gamma,u] & v=u \\
    0 & \text{otherwise.}
  \end{cases}
\]
\end{definition}

\begin{property} \label{ppty:adu} $\ad_u$ is the infinitesimal version
of both $C_u$ and $RC_u$. Namely, if $\delta\gamma$ is an infinitesimal,
then $C_u^{\delta\gamma} = RC_u^{\delta\gamma} = 1+\ad_u\{\delta\gamma\}$.
\end{property}

We omit the easy proof of this property and move on to $\delta
C_u^\gamma$ and $\delta RC_u^\gamma$:

\begin{lemma} \label{lem:dC}
\hfill $\displaystyle \delta C_u^\gamma
  = \ad_u\left\{
    \delta\gamma\sslash\frac{e^{\ad \gamma}-1}{\ad\gamma}\sslash RC_u^{-\gamma}
  \right\} \sslash C_u^\gamma
$\hfill\null
\newline and
\hfill$\displaystyle \delta RC_u^\gamma
  = RC_u^\gamma \sslash\ad_u\left\{
    \delta\gamma\sslash\frac{1-e^{-\ad\gamma}}{\ad\gamma}\sslash RC_u^\gamma
  \right\}
$.\hfill\null
\end{lemma}

\begin{proof} Substitute $\alpha$ and $\delta\beta$ into
Equation~\eqref{eq:RCh} and get $RC_u^{\bch(\alpha,\delta\beta)} =
RC_u^\alpha \sslash RC_u^{\delta\beta\sslash RC_u^\alpha}$, and hence using
Property~\ref{ppty:adu} for the infinitesimal $\delta\beta\sslash
RC_u^\alpha$ and Lemma~\ref{lem:dbch} with $\delta\alpha=\beta=0$ on
$\bch(\alpha,\delta\beta)$,
\[ RC_u^{\alpha+(\delta\beta\sslash\frac{\ad\alpha}{1-e^{-\ad\alpha}})}
  = RC_u^\alpha
    + RC_u^\alpha\sslash\ad_u\{\delta\beta\sslash RC_u^\alpha\}.
\]
Now replacing $\alpha\to\gamma$ and
$\delta\beta\to\delta\gamma\sslash\frac{1-e^{-\ad\gamma}}{\ad\gamma}$, we
get the equation for $\delta RC_u^\gamma$. The equation for $\delta
C_U^\gamma$ now follows by taking the variation of $C_u^\gamma\sslash
RC_u^{-\gamma}=Id$. \qed
\end{proof}

Our next task is to compute $\delta J_u(\gamma)$. Yet before we can
do that, we need to know one of the two properties of $\diver_u$ that
matter for us (besides its linearity):

\begin{proposition} For any $u,v\in T$ and any $\alpha,\beta\in\FL$ and
with $\delta_{uv}$ denoting the Kronecker delta function, the following
``cocycle condition'' holds:
(compare with~\cite[Proposition~3.20]{AlekseevTorossian:KashiwaraVergne})
\begin{equation} \label{eq:divproph}
  \underbrace{(\diver_u\alpha)\sslash\ad_v^\beta}_A
  - \underbrace{(\diver_v\beta)\sslash\ad_u^\alpha}_B
  = \underbrace{\delta_{uv}\diver_u [\alpha,\beta]}_C
  + \underbrace{\diver_u(\alpha\sslash\ad_v^\beta)}_D
  - \underbrace{\diver_v(\beta\sslash\ad_u^\alpha)}_E.
\end{equation}
\end{proposition}

\parpic[r]{\begin{picture}(0,0)%
\includegraphics{figs/gates.pstex}%
\end{picture}%
%
%  pstex_opts: -m 1.0 
%
\setlength{\unitlength}{3947sp}%
\begingroup\makeatletter\ifx\SetFigFont\undefined%
\gdef\SetFigFont#1#2#3#4#5{%
  \reset@font\fontsize{#1}{#2pt}%
  \fontfamily{#3}\fontseries{#4}\fontshape{#5}%
  \selectfont}%
\fi\endgroup%
\begin{picture}(1524,1224)(-11,-523)
\put(1201, 89){\makebox(0,0)[b]{\smash{{\SetFigFont{12}{14.4}{\rmdefault}{\mddefault}{\updefault}{\color[rgb]{0,0,0}$\beta$}%
}}}}
\put(301, 89){\makebox(0,0)[b]{\smash{{\SetFigFont{12}{14.4}{\rmdefault}{\mddefault}{\updefault}{\color[rgb]{0,0,0}$\alpha$}%
}}}}
\end{picture}%
}
\begin{proof} Start with the case where $u=v$. We draw each contribution
to each of the terms above and note that all of these contributions
cancel, but we must first explain our drawing conventions. We draw
$\alpha$ and $\beta$ as the ``logic gates'' appearing on the right. Each
is really a linear combination, but~\eqref{eq:divproph} is bilinear so
this doesn't matter. Each is really a tree, but the proof does not use
this so we don't display this. Each may have many tail-legs labelled by
other elements of $T$, but we care only about the legs labelled $u=v$
and so we display only those, and without real loss of generality,
we draw it as if $\alpha$ and $\beta$ each have exactly 3 such tails.

\parpic[r]{\input{figs/gateops.pstex_t}}
Objects such as $\diver_u\alpha$ and $\alpha\sslash\ad_u^\beta$ are
obtained from $\alpha$ and $\beta$ by connecting the head of one near its
own tails, or near the other's tails, in all possible ways. We draw just
one summand from each sum, yet we indicate the other possible summands
in each case by marking the other places where the relevant head could
go with filled circles ($\bullet$) or empty circles ($\circ$) (the
filling of the circles has no algebraic meaning; it is there only to
separate summations in cases where two summations appear in the same
formula). I hope the pictures on the right explain this better than
the words.

\parpic[r]{\input{figs/TermA.pstex_t}}
We illustrate our next convention with the pictorial
representation of term $A$ of Equation~\eqref{eq:divproph},
$(\diver_u\alpha)\sslash\ad_u^\beta$, shown on the right. Namely, when the
two relevant summations dictate that two heads may fall on the same arc,
we split the sum into the generic part, $A_1$ on the right, in which the
two heads do not fall on the same arc, and the exceptional part, $A_2$
on the right, in which the two heads do indeed fall on the same arc. The
last convention is that $\bullet$ indicates the first summation, and
$\circ$ the second. Hence in $A_1$, the $\alpha$ head may fall in $3$
places, and after that, the $\beta$ head may only fall on one of the
remaining relevant tails, whereas in $A_1$ the $\alpha$ is again free, but
the $\beta$ head must fall on the same arc.

With all these conventions in place and with term $A$ as above, we depict
terms $B$-$E$:
\[ \input{figs/TermsBD.pstex_t} \]
\[ \input{figs/TermsCE.pstex_t} \]
Clearly, $A_1=D_1$, $B_1=E_1$, and $D_3=E_3$ (the last equality is the only
place in this paper that we need the cyclic property of cyclic words).
Also, by the Jacobi identity, $A_2-D_2=C_1$ and $E_2-B_2=C_2$. So
altogether, $A-B=C+D-E$.

The case where $u\neq v$ is similar, except we have to separate between
$u$ and $v$ tails, the terms analogous to $A_2$, $B_2$, $D_2$ and $E_2$
cannot occur, and $C=0$:
\[ \input{figs/TermsABDE.pstex_t} \]
Clearly, $A-B=D-E$. \qed

\end{proof}

For completeness and for use within the proof of
Equation~\eqref{eq:JtProperty}, here's the remaining property of $\diver$ we
need to know, presented without its easy proof:

\begin{proposition} \label{prop:divpropt} For any $\gamma\in\FL$,
\hfill$\displaystyle
  \gamma\sslash t^{uv}_w \sslash \diver_w
  = \gamma\sslash\diver_u\sslash t^{uv}_w
  + \gamma\sslash\diver_v\sslash t^{uv}_w.
$\qed
\end{proposition}

\begin{proposition} \label{prop:dJ}
$\displaystyle \delta J_u(\gamma)
  = \delta\gamma \sslash \frac{1-e^{-\ad\gamma}}{\ad\gamma}
    \sslash RC_u^{\gamma} \sslash \diver_u \sslash C_u^{-\gamma}
$.
\end{proposition}

\begin{proof} Let $I_s:=\gamma \sslash RC_u^{s\gamma} \sslash \diver_u
\sslash C_u^{-s\gamma}$ denote the integrand in the definition of
$J_u$. Then under $\gamma\to\gamma+\delta\gamma$, using 
Leibniz, the linearity of $\diver_u$, and both parts of
Lemma~\ref{lem:dC}, we have
\[ \begin{split} \delta I_s
  = \delta\gamma \sslash RC_u^{s\gamma} \sslash \diver_u \sslash
      C_u^{-s\gamma}
    &+ \gamma \sslash RC_u^{s\gamma} \sslash \ad_u\left\{
      \delta\gamma \sslash \frac{1-e^{-\ad s\gamma}}{\ad\gamma}
        \sslash RC_u^{s\gamma}
    \right\}\sslash \diver_u \sslash C_u^{-s\gamma} \\
    &- \gamma \sslash RC_u^{s\gamma} \sslash \diver_u
      \sslash \ad_u\left\{
        \delta\gamma \sslash \frac{1-e^{-\ad s\gamma}}{\ad\gamma}\sslash
        RC_u^{s\gamma}
        \right\} \sslash C_u^{-s\gamma}.
\end{split} \]
Taking the last two terms above as $D$ and $A$ of
Equation~\eqref{eq:divproph}, with $\alpha=\gamma \sslash
RC_u^{s\gamma}$ and $\beta=\delta\gamma \sslash \frac{1-e^{-\ad
s\gamma}}{\ad\gamma}\sslash RC_u^{s\gamma}$, and using
$[\alpha,\beta]=[\gamma, \delta\gamma \sslash \frac{1-e^{-\ad
s\gamma}}{\ad\gamma}] \sslash RC_u^{s\gamma} = \delta\gamma \sslash
(1-e^{-\ad s\gamma}) \sslash RC_u^{s\gamma}$, we get
\[ \begin{split} \delta I_s
  = \delta\gamma \sslash RC_u^{s\gamma} \sslash \diver_u \sslash
    C_u^{-s\gamma}
  &+ \delta\gamma \sslash \frac{1-e^{-\ad s\gamma}}{\ad\gamma}
    \sslash RC_u^{s\gamma} \sslash \ad_u\{\gamma \sslash RC_u^{s\gamma}\}
    \sslash \diver_u \sslash C_u^{-s\gamma} \\
  &- \delta\gamma \sslash \frac{1-e^{-\ad s\gamma}}{\ad\gamma}
    \sslash RC_u^{s\gamma} \sslash \diver_u
    \sslash \ad_u\{\gamma \sslash RC_u^{s\gamma}\}
    \sslash C_u^{-s\gamma} \\
  &- \delta\gamma \sslash (1-e^{-\ad s\gamma}) \sslash RC_u^{s\gamma}
    \sslash \diver_u \sslash C_u^{-s\gamma},
\end{split} \]
and so, by combining the first and the last terms above,
\[ \begin{split} \delta I_s
  =\, & \delta\gamma \sslash e^{-\ad s\gamma}
    \sslash RC_u^{s\gamma} \sslash \diver_u \sslash
    C_u^{-s\gamma} \\
  &+ \delta\gamma \sslash \frac{1-e^{-\ad s\gamma}}{\ad\gamma}
    \sslash RC_u^{s\gamma} \sslash \ad_u\{\gamma \sslash RC_u^{s\gamma}\}
    \sslash \diver_u \sslash C_u^{-s\gamma} \\
  &- \delta\gamma \sslash \frac{1-e^{-\ad s\gamma}}{\ad\gamma}
    \sslash RC_u^{s\gamma} \sslash \diver_u 
    \sslash \ad_u\{\gamma \sslash RC_u^{s\gamma}\}
    \sslash C_u^{-s\gamma},
\end{split} \]
and hence, once again using Lemma~\ref{lem:dC} to differentiate
$RC_u^{s\gamma}$ and $C_u^{-s\gamma}$ (except that things are now simpler
because $s\gamma$ and $\delta(s\gamma) = \frac{d}{ds}(s\gamma) = \gamma$
commute), we get
\[ \delta I_s = \frac{d}{ds}\left(
  \delta\gamma \sslash \frac{1-e^{-\ad s\gamma}}{\ad\gamma}
  \sslash RC_u^{s\gamma} \sslash \diver_u \sslash C_u^{-s\gamma} \right).
\]
Integrating with respect to the variable $s$ and using the fundamental
theorem of calculus, we are done. \qed
\end{proof}

\noindent{\em Proof of Equation~\eqref{eq:JhProperty}.} We fix $\alpha$
and show that Equation~\eqref{eq:JhProperty} holds for every $\beta$. For
this it is enough to show that Equation~\eqref{eq:JhProperty} holds
for $\beta=0$ (it trivially does), and that the derivatives of both
sides of Equation~\eqref{eq:JhProperty} in the radial direction are
equal, for any given $\beta$. Namely, it is enough to verify that the
variations of the two sides of Equation~\eqref{eq:JhProperty} under
$\beta\to\beta+\delta\beta$ are equal, where $\delta\beta$ is proportional
to $\beta$. Indeed, using the chain rule, Lemma~\ref{lem:dbch},
Proposition~\ref{prop:dJ}, the fact that $\beta$ commutes with
$\delta\beta$, and with $\gamma:=\bch(\alpha,\beta)$,
\[ \begin{split} \delta LHS
  &= \left(\delta\beta\sslash\frac{1-e^{-\ad\beta}}{\ad\beta}
    \sslash \frac{\ad\gamma}{1-e^{-\ad\gamma}}\right)
    \sslash \frac{1-e^{-\ad\gamma}}{\ad\gamma}
    \sslash RC_u^{\gamma} \sslash \diver_u \sslash C_u^{-\gamma} \\
  &= \delta\beta
    \sslash RC_u^{\gamma} \sslash \diver_u \sslash C_u^{-\gamma}.
\end{split} \]
Similarly, using Proposition~\ref{prop:dJ} and the fact that $\beta\sslash
RC_u^\alpha$ commutes with $\delta\beta\sslash RC_u^\alpha$,
\[ \delta RHS
  = \delta\beta\sslash RC_u^\alpha \sslash RC_u^{\beta\sslash RC_u^\alpha}
    \sslash \diver_u \sslash C_u^{-\beta\sslash RC_u^\alpha}
    \sslash C_u^{-\alpha}
  = \delta\beta
    \sslash RC_u^{\gamma} \sslash \diver_u \sslash C_u^{-\gamma},
\]
where in the last equality we have used Equation~\eqref{eq:RCh} to combine
the $RC$'s and its inverse to combine the $C$'s. \qed

\noindent{\em Proof of Equation~\eqref{eq:JuvProperty}.}
Equation~\eqref{eq:JuvProperty} clearly holds when $\alpha=0$, so as
before, it is enough to prove it after taking the radial derivative with
respect to $\alpha$. So we need (ouch!)
\begin{multline*}
  \alpha \sslash RC_u^\alpha \sslash \diver_u \sslash C_u^{-\alpha}
  - \alpha \sslash RC_v^\beta \sslash RC_u^{\alpha\sslash RC_v^\beta}
    \sslash \diver_u \sslash C_u^{-\alpha\sslash RC_v^\beta}
    \sslash C_v^{-\beta} \\
  = - \beta \sslash RC_u^\alpha \sslash \ad_u^{\alpha\sslash RC_u^\alpha}
    \sslash
    \frac{1-e^{-\ad(\beta\sslash RC_u^\alpha)}}{\ad(\beta\sslash RC_u^\alpha)}
    \sslash RC_v^{\beta\sslash RC_u^\alpha} \sslash \diver_v \sslash
    C_v^{-\beta\sslash RC_u^\alpha} \sslash C_u^{-\alpha} \\
  - \beta \sslash RC_u^\alpha \sslash J_v \sslash
    \ad_u^{-\alpha\sslash RC_u^\alpha} \sslash C_u^{-\alpha}.
\end{multline*}
This we simplify using~\eqref{eq:RCuRCv} and~\eqref{eq:CuCv}, cancel the
$C_u^{-\alpha}$ on the right, and get
\begin{multline*}
  \alpha \sslash RC_u^\alpha \sslash \diver_u
  - \alpha \sslash RC_u^\alpha \sslash RC_v^{\beta\sslash RC_u^\alpha}
    \sslash \diver_u \sslash C_v^{-\beta\sslash RC_u^\alpha} \\
  \overset{?}{=}
  - \beta \sslash RC_u^\alpha \sslash \ad_u^{\alpha\sslash RC_u^\alpha}
    \sslash 
    \frac{1-e^{-\ad(\beta\sslash RC_u^\alpha)}}{\ad(\beta\sslash RC_u^\alpha)}
    \sslash RC_v^{\beta\sslash RC_u^\alpha} \sslash \diver_v \sslash
    C_v^{-\beta\sslash RC_u^\alpha} \\
  - \beta \sslash RC_u^\alpha \sslash J_v \sslash
    \ad_u^{-\alpha\sslash RC_u^\alpha}.
\end{multline*}
We note that above $\alpha$ and $\beta$ only appear within the combinations
$\alpha\sslash RC_u^\alpha$ and $\beta\sslash RC_u^\alpha$, so we rename
$\alpha\sslash RC_u^\alpha\to\alpha$ and $\beta\sslash RC_u^\alpha\to\beta$:
\begin{multline} \label{eq:Jad}
  \alpha \sslash \diver_u
  - \alpha \sslash RC_v^\beta \sslash \diver_u \sslash C_v^{-\beta} \\
  \overset{?}{=}
  - \beta \sslash \ad_u^\alpha \sslash \frac{1-e^{-\ad(\beta)}}{\ad(\beta)}
    \sslash RC_v^\beta \sslash \diver_v \sslash C_v^{-\beta}
  - \beta \sslash J_v \sslash \ad_u^{-\alpha}.
\end{multline}

Equation~\eqref{eq:Jad} still contains a $J_v$ in it,
so in order to prove it, we have to differentiate once
again. So note that it holds at $\beta=0$, multiply by $-1$,
and take the radial variation with respect to $\beta$ (note that
$\frac{d}{ds}\left.\frac{1-e^{-\ad(s\beta)}}{\ad(s\beta)}\right|_{s=1}
= \frac{e^{-\ad(\beta)}(1+\ad(\beta)-e^{\ad(\beta)})}{\ad(\beta)}$):

\begin{equation} \label{eq:monster} \begin{split}
  \alpha \sslash RC_v^\beta
  & \sslash \ad_v^{\beta\sslash RC_v^\beta}
    \sslash \diver_u \sslash C_v^{-\beta}
  - \alpha \sslash RC_v^\beta \sslash \diver_u \sslash
    \ad_v^{\beta\sslash RC_v^\beta} \sslash C_v^{-\beta} \\
  \overset{?}{=} & \ 
  \beta \sslash \ad_u^\alpha \sslash \frac{1-e^{-\ad(\beta)}}{\ad(\beta)}
    \sslash RC_v^\beta \sslash \diver_v \sslash C_v^{-\beta} \\
  & + \beta \sslash \ad_u^\alpha
    \sslash \frac{e^{-\ad(\beta)}(1+\ad(\beta)-e^{\ad(\beta)})}{\ad(\beta)}
    \sslash RC_v^\beta \sslash \diver_v \sslash C_v^{-\beta} \\
  & + \beta \sslash \ad_u^\alpha \sslash \frac{1-e^{-\ad(\beta)}}{\ad(\beta)}
    \sslash RC_v^\beta \sslash \ad_v^{\beta\sslash RC_v^\beta}
    \sslash \diver_v \sslash C_v^{-\beta} \\
  & + \beta \sslash \ad_u^\alpha \sslash \frac{1-e^{-\ad(\beta)}}{\ad(\beta)}
    \sslash RC_v^\beta \sslash \diver_v
    \sslash \ad_v^{-\beta\sslash RC_v^\beta} \sslash C_v^{-\beta} \\
  & + \beta \sslash RC_v^\beta \sslash \diver_v \sslash CC_v^{-\beta}
    \sslash \ad_u^{-\alpha}
\end{split} \end{equation}
We massage three independent parts of the above desired equality at the
same time:
\begin{itemize}
\item The $\diver$ and the $\ad$ on the left hand
side make terms $D$ and $A$ of Equation~\eqref{eq:divproph},
with $\alpha \sslash RC_v^\beta\to\alpha$ and $\beta\sslash
RC_v^\beta\to\beta$. We replace them by terms $A$ and $E$.
\item We combine the first two terms of the right hand side using
$\frac{1-e^{-a}}{a}+\frac{e^{-a}(1+a-e^a)}{a}=e^{-a}$.
\item In Equation~\eqref{eq:CuCv},
$C_u^{-\alpha\sslash RC_v^\beta}\sslash C_v^{-\beta}
  = C_v^{-\beta\sslash RC_u^\alpha}\sslash C_u^{-\alpha}$,
take an infinitesimal $\alpha$ and use Property~\ref{ppty:adu} and
Lemma~\ref{lem:dC} to get
\begin{equation} \label{eq:InfinitesimalCuCv}
  \ad_u^{-\alpha\sslash RC_v^\beta}\sslash C_v^{-\beta}
  = \ad_v^{-\beta\sslash \ad_u^\alpha \sslash
    \frac{1-e^{-\ad(\beta)}}{\ad(\beta)} \sslash RC_v^\beta} \sslash
    C_v^{-\beta}
  + C_v^{-\beta}\sslash \ad_u^{-\alpha}.
\end{equation}
The last of that matches the last of~\eqref{eq:monster}, so we can
replace the last of~\eqref{eq:monster} with the start
of~\eqref{eq:InfinitesimalCuCv}.
\end{itemize}
All of this done, Equation~\eqref{eq:monster} becomes the lowest point of
this paper:
\[ \begin{split}
  \beta \sslash RC_v^\beta
  & \sslash \ad_u^{\alpha\sslash RC_v^\beta}
    \sslash \diver_v \sslash C_v^{-\beta}
  - \beta \sslash RC_v^\beta \sslash \diver_v \sslash
    \ad_u^{\alpha\sslash RC_v^\beta} \sslash C_v^{-\beta} \\
  \overset{?}{=} & \ 
  \beta \sslash \ad_u^\alpha \sslash e^{-\ad(\beta)}
    \sslash RC_v^\beta \sslash \diver_v \sslash C_v^{-\beta} \\
  & + \beta \sslash \ad_u^\alpha \sslash \frac{1-e^{-\ad(\beta)}}{\ad(\beta)}
    \sslash RC_v^\beta \sslash \ad_v^{\beta\sslash RC_v^\beta}
    \sslash \diver_v \sslash C_v^{-\beta} \\
  & + \beta \sslash \ad_u^\alpha \sslash
\frac{1-e^{-\ad(\beta)}}{\ad(\beta)}
    \sslash RC_v^\beta \sslash \diver_v
    \sslash \ad_v^{-\beta\sslash RC_v^\beta} \sslash C_v^{-\beta} \\
  & + \beta \sslash RC_v^\beta \sslash \diver_v \sslash
    \ad_u^{-\alpha\sslash RC_v^\beta}\sslash C_v^{-\beta} \\
  & - \beta \sslash RC_v^\beta \sslash \diver_v \sslash
    \ad_v^{-\beta\sslash \ad_u^\alpha \sslash
    \frac{1-e^{-\ad(\beta)}}{\ad(\beta)} \sslash RC_v^\beta} \sslash
    C_v^{-\beta}
\end{split} \]
Next we cancel the $C_v^{-\beta}$ at the right of every term, and a pair
of repeating terms to get
\begin{multline*}
  \beta \sslash RC_v^\beta \sslash \ad_u^{\alpha\sslash RC_v^\beta}
    \sslash \diver_v
  \overset{?}{=}
  \beta \sslash \ad_u^\alpha \sslash e^{-\ad(\beta)}
    \sslash RC_v^\beta \sslash \diver_v \\
  + \beta \sslash \ad_u^\alpha \sslash \frac{1-e^{-\ad(\beta)}}{\ad(\beta)}
    \sslash RC_v^\beta \sslash \ad_v^{\beta\sslash RC_v^\beta}
    \sslash \diver_v \\
  - \beta \sslash \ad_u^\alpha \sslash \frac{1-e^{-\ad(\beta)}}{\ad(\beta)}
    \sslash RC_v^\beta \sslash \diver_v
    \sslash \ad_v^{\beta\sslash RC_v^\beta} \\
  - \beta \sslash RC_v^\beta \sslash \diver_v \sslash 
    \ad_v^{-\beta\sslash \ad_u^\alpha \sslash
    \frac{1-e^{-\ad(\beta)}}{\ad(\beta)} \sslash RC_v^\beta}
\end{multline*}

The two middle terms above differ only in the order of $\ad_v$ and
$\diver_v$. So we apply Equation~\eqref{eq:divproph} again and get
\begin{multline*}
  \beta \sslash RC_v^\beta \sslash \ad_u^{\alpha\sslash RC_v^\beta}
    \sslash \diver_v
  \overset{?}{=}
  \beta \sslash \ad_u^\alpha \sslash e^{-\ad(\beta)}
    \sslash RC_v^\beta \sslash \diver_v \\
  + \beta \sslash RC_v^\beta \sslash \ad_v^{\beta\sslash\ad_u^\alpha
      \sslash\frac{1-e^{-\ad(\beta)}}{\ad(\beta)}\sslash RC_v^\beta}
    \sslash\diver_v
  - \beta \sslash RC_v^\beta \sslash \diver_v \sslash
    \ad_v^{\beta\sslash\ad_u^\alpha
      \sslash\frac{1-e^{-\ad(\beta)}}{\ad(\beta)}\sslash RC_v^\beta} \\
  + \left[
      \beta \sslash RC_v^\beta,
      \beta \sslash \ad_u^\alpha \sslash
        \frac{1-e^{-\ad(\beta)}}{\ad(\beta)}\sslash RC_v^\beta
    \right] \sslash \diver_v
  - \beta \sslash RC_v^\beta \sslash \diver_v \sslash
    \ad_v^{-\beta\sslash \ad_u^\alpha \sslash
    \frac{1-e^{-\ad(\beta)}}{\ad(\beta)} \sslash RC_v^\beta}
\end{multline*}
In the above, the two terms that do not end in $\diver_v$ cancel each
other. We then remove the $\diver_v$ at the end of all remaining terms,
thus making our quest only harder. Finally we note that $RC_v^\beta$ is a
Lie algebra morphism, so we can pull it out of the bracket in the
penultimate term, getting
\begin{multline*}
  \beta \sslash RC_v^\beta \sslash \ad_u^{\alpha\sslash RC_v^\beta}
  \overset{?}{=}
  \beta \sslash \ad_u^\alpha \sslash e^{-\ad(\beta)}
    \sslash RC_v^\beta \\
  + \beta \sslash RC_v^\beta \sslash \ad_v^{\beta\sslash\ad_u^\alpha
      \sslash\frac{1-e^{-\ad(\beta)}}{\ad(\beta)}\sslash RC_v^\beta}
  + \left[\beta,
      \beta \sslash \ad_u^\alpha \sslash \frac{1-e^{-\ad(\beta)}}{\ad(\beta)}
    \right] \sslash RC_v^\beta
\end{multline*}

The bracketing with $\beta$ in the last term above cancels the $\ad(\beta)$
denominator there, and then that term combines with the first term of the
right hand side to yield
\[
  \beta \sslash RC_v^\beta \sslash \ad_u^{\alpha\sslash RC_v^\beta}
  \overset{?}{=}
  \beta \sslash \ad_u^\alpha \sslash RC_v^\beta
  + \beta \sslash RC_v^\beta \sslash \ad_v^{\beta\sslash\ad_u^\alpha
      \sslash\frac{1-e^{-\ad(\beta)}}{\ad(\beta)}\sslash RC_v^\beta}
\]
We make our task harder again,
\[
  RC_v^\beta \sslash \ad_u^{\alpha\sslash RC_v^\beta}
  \overset{?}{=} \ad_u^\alpha \sslash RC_v^\beta 
  + RC_v^\beta \sslash \ad_v^{\beta\sslash\ad_u^\alpha
    \sslash\frac{1-e^{-\ad(\beta)}}{\ad(\beta)}\sslash RC_v^\beta}
\]
and then we both pre-compose and post-compose with the isomorphism
$C_v^{-\beta}$, getting
\[
  \ad_u^{\alpha\sslash RC_v^\beta} \sslash C_v^{-\beta}
  \overset{?}{=} C_v^{-\beta} \sslash \ad_u^\alpha
  + \ad_v^{\beta\sslash\ad_u^\alpha
    \sslash\frac{1-e^{-\ad(\beta)}}{\ad(\beta)}\sslash RC_v^\beta}
    \sslash C_v^{-\beta}
\]
The above is Equation~\eqref{eq:InfinitesimalCuCv}, with $\alpha$ replaced
by $-\alpha$, and hence it holds true. \qed

\noindent{\em Proof of Equation~\eqref{eq:JtProperty}.}
As before, the equation clearly holds at $\gamma=0$, so we take its radial
derivative. That of the left hand side is
\[ \gamma \sslash tm^{uv}_w \sslash RC_w^{\gamma\sslash tm^{uv}_w}
    \sslash \diver_w \sslash C_w^{-\gamma\sslash tm^{uv}_w}
\]
Using Equation~\eqref{eq:nts-t} and then Proposition~\ref{prop:divpropt},
this becomes
\[ \gamma \sslash RC_u^\gamma \sslash RC_v^{\gamma\sslash RC_u^\gamma}
    \sslash (\diver_u+\diver_v) \sslash tm^{uv}_w \sslash
    C_w^{-\gamma\sslash tm^{uv}_w}.
\]
Now using the reverse of Equation~\eqref{eq:nts-t}, proven by reading the
horizontal arrows within its proof backwards, this becomes
\[ \gamma \sslash RC_u^\gamma \sslash RC_v^{\gamma\sslash RC_u^\gamma}
    \sslash (\diver_u+\diver_v) \sslash C_v^{-\gamma\sslash RC_u^\gamma}
    \sslash C_u^{-\gamma} \sslash tm^{uv}_w.
\]

On the other hand, the radial variation of the right hand side
of~\eqref{eq:JtProperty} is
\begin{multline*}
  \gamma \sslash RC_u^\gamma \sslash \diver_u \sslash C_u^{-\gamma}
    \sslash tm^{uv}_w
  + \gamma \sslash RC_u^\gamma \sslash RC_v^{\gamma\sslash RC_u^\gamma}
    \sslash \diver_v \sslash C_v^{-\gamma\sslash RC_u^\gamma} \sslash
    C_u^{-\gamma} \sslash tm^{uv}_w \\
  + \gamma \sslash RC_u^\gamma \sslash \ad_u^{\gamma\sslash RC_u^\gamma}
    \sslash \frac{1-e^{-\ad(\gamma\sslash RC_u^\gamma)}}
      {\ad(\gamma\sslash RC_u^\gamma)}
    \sslash RC_v^{\gamma\sslash RC_u^\gamma} \sslash \diver_v \sslash
    C_v^{-\gamma\sslash RC_u^\gamma} \sslash C_u^{-\gamma} \sslash
    t^{uv}_w \\
  + \gamma \sslash RC_u^\gamma \sslash J_v \sslash
    \ad_u^{-\gamma\sslash RC_u^\gamma} \sslash C_u^{-\gamma} \sslash
    t^{uv}_w
\end{multline*}

Equating the last two formulae while eliminating the common term (the
second term in each) and removing all trailing $C_u^{-\gamma}\sslash
t^{uv}_w$'s (thus making the quest harder), we need to show that
\begin{multline*}
  \gamma \sslash RC_u^\gamma \sslash RC_v^{\gamma\sslash RC_u^\gamma}
    \sslash \diver_u \sslash C_v^{-\gamma\sslash RC_u^\gamma}
  = \gamma \sslash RC_u^\gamma \sslash \diver_u \\
  + \gamma \sslash RC_u^\gamma \sslash \ad_u^{\gamma\sslash RC_u^\gamma}
    \sslash \frac{1-e^{-\ad(\gamma\sslash RC_u^\gamma)}}
      {\ad(\gamma\sslash RC_u^\gamma)}
    \sslash RC_v^{\gamma\sslash RC_u^\gamma} \sslash \diver_v \sslash
    C_v^{-\gamma\sslash RC_u^\gamma} \\
  + \gamma \sslash RC_u^\gamma \sslash J_v \sslash
    \ad_u^{-\gamma\sslash RC_u^\gamma}.
\end{multline*}
Nicely enough, the above is Equation~\eqref{eq:Jad} with
$\alpha=\beta=\gamma\sslash RC_u^\gamma$. \qed

\subsection{Notational Conventions and Glossary} \label{subsec:Conventions}

For $n\in\bbN$ let $\underline{n}$ denote some fixed set with $n$
elements, say $\{1,2,\dots,n\}$.

Often within this paper we use postfix notation for
operator evaluations, so $f(x)$ may also be denoted $x\sslash
f$.  Even better, we use $f\sslash g$ for ``composition
done right'', meaning $f\sslash g=g\circ f$, meaning that if
$X\overset{f}{\longrightarrow}Y\overset{g}{\longrightarrow}Z$ then
$X\overset{f\sslash g}{\longrightarrow}Z$ rather than the uglier
(though equally correct) $X\overset{g\circ f}{\longrightarrow}Z$. We
hope that this notation will be adopted by others, to be used alongside
and eventually instead of $g\circ f$, much as we hope that $\tau$ will be
used alongside and eventually instead of the presently popular
$\pi:=\tau/2$. In \LaTeX, $\sslash=\verb$\sslash$\in\verb$stmaryrd.sty$$.

In the few paragraphs that follow, $X$ is an arbitrary set. Though within
this paper such $X$'s will usually be finite, and their elements will
thought of as ``labels''. Hence if $f\in G^X$ is a function $f\colon X\to
G$ where $G$ is some other set, we think of $f$ as a collection of elements
of $G$ labelled by the elements of $X$. We often write $f_x$ to denote
$f(x)$.

If $f\in G^X$ and $x\in X$, we let $f\remove x$ denote the restricted
function $f|_{X\remove x}$ in which $x$ is removed from the domain of
$f$. In other words, $f\remove x$ is ``the collection $f$, with the
element labelled $x$ removed''. We often neglect to state the condition
$x\in X$. Thus when writing $f\remove x$ we implicitly assume that
$x\in X$.

Likewise, we write $f\remove\{x,y\}$ for ``$f$ with $x$ and $y$ removed
from its domain'' and as before this includes the implicit assumption that
$\{x,y\}\subset X$.

If $f_1\colon X_1\to G$ and $f_2\colon X_2\to G$ and $X_1$ and $X_2$ are
disjoint, we denote by $f\cup g$ the obvious ``union function'' with domain
$X_1\cup X_2$ and range $G$. In fact, whenever we write $f\cup g$, we make
the implicit assumption that the domains of $f_1$ and $f_2$ are disjoint.

In the spirit of ``associative arrays'' as they appear in various computer
languages, we use the notation $(x\to a,\,y\to b,\dots)$ for
``inline function definition''. Thus $()$ is the empty function, and if
$f=(x\to a,\,y\to b)$, then the domain of $f$ is $\{x,y\}$ and
$f_x=a$ and $f_y=b$.

We denote by $\sigma^x_y$ the operation that renames the key $x$ in an
associative array to $y$. Namely, if $f\in G^X$, $x\not\in X$, and
$y\not\in X\remove x$, then
\[ \sigma^x_y f = (f\remove x)\cup(y\to f_x). \]

\noindent{\bf Glossary of Notations.} (Greek letters, then Latin, then
symbols)

\noindent
{\small \begin{multicols}{2}
\begin{list}{}{
  \renewcommand{\makelabel}[1]{#1\hfil}
}

\item[{$\alpha,\beta,\gamma$}] Free Lie series\hfill Sec.~\ref{sec:trees}
\item[{$\alpha,\beta,\gamma,\delta$}] Matrix parts\hfill
  Sec.~\ref{subsec:Ultimate}
\item[{$\beta$}] A repackaging of $\beta$\hfill Sec.~\ref{subsec:Ultimate}
\item[{$\beta_0$}] A reduction of $M$\hfill Sec.~\ref{subsec:beta0}
\item[{$\delta$}] A map $\uT/\vT/\wT\to\Kbh$\hfill Sec.~\ref{subsec:delta}
\item[{$\delta\alpha,\delta\beta,\delta\gamma$}] Infinitesimal free Lie
  series\hfill Sec.~\ref{subsec:JProperties}
\item[{$\epsilon_a$}] Units\hfill Sec.~\ref{subsec:WisMG}
\item[{$\Pi$}] The MMA ``of groups''\hfill Sec.~\ref{sec:Pi}
\item[{$\pi$}] The fundamental invariant\hfill Sec.~\ref{subsec:pi}
\item[{$\pi$}] The projection $\Kbhz\to\Kbh$\hfill Prop.~\ref{prop:pidelta}
\item[{$\rho^\pm_{ux}$}] $\pm$-Hopf links in 4D\hfill Ex.~\ref{exa:generators}
\item[{$\sigma^x_y$}] Re-labelling\hfill Sec.~\ref{subsec:Conventions}
\item[{$\tau$}] Tensorial interpretation map\hfill
  Sec.~\ref{subsec:TensorialInterpretation}
\item[{$\omega$}] The wheels part of $M$/$\zeta$\hfill Sec.\ref{sec:zeta}
\item[{$\omega$}] The scalar part in $\beta/\beta_0$\hfill
  Sec.~\ref{subsec:beta0}
\item[{$\Upsilon$}] Capping and sliding\hfill
  Sec.\ref{subsec:topdelta}
\item[{$\zeta$}] The main invariant\hfill Sec.~\ref{sec:zeta}
\item[{$\zeta_0$}] The tree-level invariant\hfill Sec.~\ref{sec:trees}
\item[{$\zeta^\beta$}] A $\beta$-valued invariant\hfill
  Sec.~\ref{subsec:Ultimate}
\item[{$\zeta^{\beta_0}$}] A $\beta_0$-valued invariant\hfill
  Sec.~\ref{subsec:beta0}

\item
\item[{$A$}] The matrix part in $\beta/\beta_0$\hfill Sec.~\ref{subsec:beta0}
\item[{$a,b,c$}] Strand labels\hfill Sec.~\ref{subsec:delta}
\item[{$\ad_u^\gamma,\,\ad_u\{\gamma\}$}] Derivations of $\FL$\hfill
  Def.~\ref{def:adu}
\item[{$\Abh$}] Space of arrow diagrams\hfill
  Sec.~\ref{subsec:ArrowDiagrams}
\item[{$\bch$}] Baker-Campbell-Hausdorff\hfill Sec.~\ref{subsec:FLSuccess}
\item[{$C_u^\gamma$}] Conjugating a generator\hfill Sec.~\ref{subsec:FLSuccess}
\item[{$\CA$}] Circuit algebra\hfill Sec.~\ref{subsec:CA}
\item[{$\CW$}] Cyclic words\hfill Sec.~\ref{subsec:divJ}
\item[{$\CW^r$}] $\CW$ mod degree 1\hfill Sec.~\ref{subsec:divJ}
\item[{$c$}] A ``sink'' vertex\hfill Sec.~\ref{subsec:Informal}
\item[{$c_u$}] A ``$c$-stub''\hfill Sec.~\ref{subsec:Informal}
\item[{$\diver_u$}] The ``divergence'' $\FL\to\CW$\hfill
  Sec.~\ref{subsec:divJ}
\item[{$dm^{ab}_c$}] Double/diagonal multiplication\hfill
  Sec.~\ref{subsec:WisMG}
\item[{$\FA$}] Free associative algebra\hfill Sec.~\ref{subsec:divJ}
\item[{$\FL$}] Free Lie algebra\hfill Sec.~\ref{subsec:FLSuccess}
\item[{$\Fun(X\to Y)$}] Functions $X\to Y$\hfill
  Sec.~\ref{subsec:TensorialInterpretation}
\item[{$H$}] Set of head / hoop labels\hfill Sec.~\ref{sec:objects}
\item[{$h\epsilon_x$}] Units\hfill
  Ex.~\ref{exa:generators}, Sec.~\ref{subsec:FLSuccess},\ref{subsec:MMAM}
\item[{$h\eta$}] Head delete\hfill
  Sec.~\ref{sec:Operations},\ref{subsec:FLSuccess},\ref{subsec:MMAM}
\item[{$hm^{xy}_z$}] Head multiply\hfill
  Sec.~\ref{sec:Operations},\ref{subsec:FLSuccess},\ref{subsec:MMAM}
\item[{$h\sigma^x_y$}] Head re-label\hfill
  Sec.~\ref{sec:Operations},\ref{subsec:FLSuccess},\ref{subsec:MMAM}
\item[{$J_u$}] The ``spice'' $\FL\to\CW$\hfill Sec.~\ref{subsec:divJ}
\item[{$\Kbh$}] All rKBHs\hfill Def.~\ref{def:KBH}
\item[{$\Kbhz$}] Conjectured version of $\Kbh$\hfill Sec.~\ref{subsec:genrels}
\item[{$l_{ux}$}] 4D linking numbers\hfill Sec.~\ref{subsec:Hopf}
\item[{$l_x$}] Longitudes\hfill Sec.~\ref{subsec:pi}
\item[{$M$}] The ``main'' MMA\hfill Sec.~\ref{subsec:MMAM}
\item[{$M_0$}] The MMA of trees\hfill Sec.~\ref{subsec:FLSuccess}
\item[{MMA}] Meta-monoid-action\hfill Def.~\ref{def:MMA}, Sec.~\ref{sssec:MMA}
\item[{$m_u$}] Meridians\hfill Sec.~\ref{subsec:pi}
\item[{$m^{ab}_c$}] Strand concatenation\hfill Sec~\ref{subsec:WisMG}
\item[{OC}] Overcrossings commute\hfill Fig.~\ref{fig:OCUC}
\item[{$\Pbh$}] Primitives of $\Abh$\hfill Sec.~\ref{subsec:Primitives}
\item[{$R$}] Ring of $c$-stubs\hfill Sec.~\ref{subsec:LessInformal}
\item[{$R^r$}] $R$ mod degree 1\hfill Sec.~\ref{subsec:beta0}
\item[{R1,R1',R2,R3}] Reidemeister moves\hfill
  Sec.~\ref{subsec:delta},~\ref{subsec:CA}
\item[{$RC_u^\gamma$}] Repeated $C_u^\gamma$ / reverse $C_u^{-\gamma}$\hfill
  Sec.~\ref{subsec:FLSuccess}
\item[{rKBH}] Ribbon knotted balloons\&hoops\hfill Def.~\ref{def:KBH}
\item[{$S$}] Set of strand labels\hfill Sec.~\ref{subsec:delta}
\item[{$T$}] Set of tail / balloon labels\hfill Sec.~\ref{sec:objects}
\item[{$t\epsilon^u$}] Units\hfill
  Ex.~\ref{exa:generators}, Sec.~\ref{subsec:FLSuccess},\ref{subsec:MMAM}
\item[{$tha^{ux}$}] Tail by head action\hfill
  Sec.~\ref{sec:Operations},\ref{subsec:FLSuccess},\ref{subsec:MMAM}
\item[{$t\eta^u$}] Tail delete\hfill
  Sec.~\ref{sec:Operations},\ref{subsec:FLSuccess},\ref{subsec:MMAM}
\item[{$tm^{uv}_w$}] Tail multiply\hfill
  Sec.~\ref{sec:Operations},\ref{subsec:FLSuccess},\ref{subsec:MMAM}
\item[{$t\sigma^x_y$}] Tail re-label\hfill
  Sec.~\ref{sec:Operations},\ref{subsec:FLSuccess},\ref{subsec:MMAM}
\item[{$t,x,y,z$}] Coordinates\hfill Sec.~\ref{sec:objects}
\item[{UC}] Undercrossings commute\hfill Fig.~\ref{fig:OCUC}
\item[{u-tangle}] A usual tangle\hfill Sec.~\ref{subsec:delta}
\item[{$\uT$}] All u-tangles\hfill Sec.~\ref{subsec:delta}
\item[{$u,v,w$}] Tail / balloon labels\hfill Sec.~\ref{sec:objects}
\item[{v-tangle}] A virtual tangle\hfill Sec.~\ref{subsec:vw}
\item[{$\vT$}] All v-tangles\hfill Sec.~\ref{subsec:vw}
\item[{w-tangle}] A virtual tangle mod OC\hfill Sec.~\ref{subsec:vw}
\item[{$\wT$}] All w-tangles\hfill Sec.~\ref{subsec:vw}
\item[{$x,y,z$}] Head / hoop labels\hfill Sec.~\ref{sec:objects}
\item[{$\Zbh$}] An $\Abh$-valued expansion\hfill Sec.~\ref{subsec:Zbh}

\item
\item[{$\ast$}] Merge
  operation\hfill Sec.~\ref{sec:Operations},\ref{subsec:FLSuccess},\ref{subsec:MMAM}
\item[{$\sslash$}] Composition done right\hfill Sec.~\ref{subsec:Conventions}
\item[{$x\sslash f$}] Postfix evaluation\hfill Sec.~\ref{subsec:Conventions}
\item[{$f\remove x$}] Entry removal\hfill Sec.~\ref{subsec:Conventions}
\item[{$x\to a$}] Inline function definition\hfill Sec.~\ref{subsec:Conventions}
\item[{$\overbracket[0.5pt][1pt]{uv}$}] ``Top bracket form''\hfill
  Sec.~\ref{sec:Computations}
\item[{$\overbrace[L1R]{uv}$}] A cyclic word\hfill Sec.~\ref{sec:Computations}

\end{list}
\end{multicols}}

\subsection{Acknowledgement} I wish to thank the people of the Knot at
Lunch seminar in Toronto (Oleg Chterental, Karene Chu, Zsuzsanna Dancso,
Peter Lee, Stephen Morgan, James Mracek, David Penneys, Peter Samuelson,
and Sam Selmani) for hearing this several times and for making comments
and suggestions. Further thanks to Zsuzsanna Dancso for pushing me to
write this, to Martin Bridson for telling me about LOT groups and to
Eckhard Meinrenken and Jim Stasheff for further comments. This work was
partially supported by NSERC grant RGPIN 262178, by ITGP (Interactions
of Low-Dimensional Topology and Geometry with Mathematical Physics),
an ESF RNP, and by the Newton Institute in Cambridge, UK.

\vfill

\noindent$\left.\parbox{5.2in}{
  Next two pages: The handout for a talk on this paper, given in Chicago
  in March 2013. A video recording of that talk is at \web{chic2}. Older
  versions of handout/talk/video are at \web{ham}, \web{ox}, and
  \web{tor}, and a slightly newer version is at \web{viet}.

  At end: A copy of \web{}.
}\quad\right\}\quad$\parbox{0.7in}{
  omitted in arXiv version
}

\end{document}